\newtheorem{theorem}{Theorem}[section]
\newtheorem{corollary}[theorem]{Corollary}
\newtheorem{definition}[theorem]{Definition}
\newtheorem{lemma}[theorem]{Lemma}
\newtheorem{proposition}[theorem]{Proposition}
\newtheorem{remark}[theorem]{Remark}
\newenvironment{proof}[1][Proof]{\textbf{#1.} }{\hfill\rule{0.5em}{0.5em}}
{\catcode`\@=11\global\let\AddToReset=\@addtoreset
\AddToReset{equation}{section}

\AddToReset{theorem}{section}

\title{Global regularity estimates for $p(x)$-Laplacian variational inequalities with singular or degenerate matrix-valued weights}

\author{Minh-Phuong Tran\thanks{Applied Analysis Research Group, Faculty of Mathematics and Statistics, Ton Duc Thang University, Ho Chi Minh city, Vietnam; \texttt{tranminhphuong@tdtu.edu.vn}}; Duc-Quang Bui\thanks{Le Mans Universit\'e, Laboratoire Manceau de Math\'ematiques, Avenue Olivier Messiaen, 72085 Le Mans Cedex~9, France; \texttt{Duc\_Quang.Bui@univ-lemans.fr}}; Thanh-Nhan Nguyen\footnote{Corresponding author} \thanks{Group of Analysis and Applied Mathematics, Department of Mathematics, Ho Chi Minh City University of Education, Ho Chi Minh city, Vietnam; \texttt{nhannt@hcmue.edu.vn}}}

\date{\today} 

\begin{document}
 
\maketitle
\begin{abstract}

We establish the global gradient bounds for weak solutions to the elliptic variational inequality with two-sided obstructions, associated with a $p(x)$-Laplacian type operator involving degenerate or singular matrix weights.  Under the optimal regularity assumptions on the matrix-valued weight, suitable geometric flatness of the domain, and the prescribed data, we aim to investigate the effects of the problem structure on the level of integrability properties of solutions. To this end, we develop regularity in two regards: weighted Calder\'on-Zygmund-type and general weighted Orlicz-type estimates. A notable feature of our results is that, through a constructive level-set approach, the estimates can be derived with minimal dependence of the scaling parameter on the structural constants. The regularity results are then sharp in the sense that they enable the construction of a level-set estimate with nearly optimal scaling parameters, within admissible parameter sets.\\

\noindent Keywords. Calder\'on-Zygmund estimates; $p(x)$-Laplacian; Double-obstacle problems; Degenerate weights; Orlicz spaces.

\medskip

\noindent  2020 Mathematics Subject Classification. 
Primary: 35J87; 35J70; 35B65.  Secondary: 35J75; 35J92; 46E30.

\end{abstract}
\maketitle

\section{Introduction}

$ $
\indent
\textit{1.1. Motivation and relevant studies.} Aside from classical regularity issues regarding uniformly elliptic equations, the analysis of problems driven by degenerate matrix weights has recently garnered the attention of many authors due to their interest and applications. As far as we are concerned, there are few known studies in the literature concerning problems of this type, as well as their regularity and existence theory.

Regarding the aspect of regularity theory, the analysis has started in pioneering works for variational problems with uniformly elliptic/parabolic types (with or without degenerate structures). They have been initiated and developed in a series of fruitful results via the contributions of several authors over the last decades (see classical textbooks~\cite{GT1983, DiBen1993, Lieber1996, Krylov2008, LSU1967}, seminal papers~\cite{Meyer1963, Cianchi2011, LU1970, Frehse2008,AM2005, AM2007, Caffa1998, Caffa1989}, and a rich collection of related references). To simplify our discussion, let us first consider the weak solutions to the uniformly elliptic equations of the linear type 
\begin{align}\label{eq:uni_ellip}
\mathrm{div}[\mathbb{A}(x)\nabla u(x)]=\mathrm{div}[\mathbb{A}(x)\mathbf{F}(x)], \quad x \in \Omega,
\end{align}
where $\Omega \subset \mathbb{R}^n$ is an open bounded subset ($n \ge 2$), $\mathbf{F}: \Omega \to \mathbb{R}^n$ is a given measurable vector field, $\mathbb{A}=(a_{ij})_{n \times n}$ is a symmetric, positive definite matrix with measurable coefficients satisfying the uniform ellipticity:
\begin{align}\label{cond:linear_uniform}
\lambda_1 |\xi|^2 \le \langle \mathbb{A}(x)\xi, \xi \rangle \le \lambda_2 |\xi|^2,
\end{align}
for almost every $x \in \Omega$, for all $\xi \in \mathbb{R}^n$, and some positive constants $0<\lambda_1\le \lambda_2<\infty$. The classical example of such a class of equations or systems is given by the Laplace operator, where $\mathbb{A}=\mathrm{Id}_n$-the identity matrix, and the first set of existence and regularity results goes back to the seminal works by Calder\'on-Zygmund in~\cite{CZ1952, CZ1956}. Via Calder\'on-Zygmund theory of singular integrals, the regularity estimates for the gradient of weak solutions have been established. Since then, many important progresses have been made for a larger class of operators involving matrices $\mathbb{A}$, and we especially refer to~\cite{Meyer1963, Fazio1996, AQ2002, BW2004} for the studies governed by linear uniformly elliptic operators in various settings and under suitable assumptions on the data of problems. 

Be different from the standard uniform ellipticity condition described in~\eqref{cond:linear_uniform}, when the matrix $\mathbb{A}$ in~\eqref{eq:uni_ellip} is controlled by an additional matrix weight $\mu$ that may be infinite or vanish in the domain, our equation is then no longer uniformly elliptic and the behavior of solution changes dramatically. Then, it allows us to consider a new class of degenerate elliptic equations whose ellipticity property depends on $\mu$. Loosely speaking, in this case, our matrix $\mathbb{A}$ becomes singular (or degenerate) when its determinant goes to zero or its inverse blows up at some points. More precisely, the degenerate condition is often written as
\begin{align}\label{cond:linear_degenerate}
\lambda_1 \mu(x)|\xi|^2 \le \langle \mathbb{A}(x) \xi, \xi \rangle \le \lambda_2 \mu(x) |\xi|^2, \quad x \in \Omega, \ \xi \in \mathbb{R}^n,
\end{align}
where $\mu: \Omega \to [0,\infty)$ is a scalar weight function. With the presence of matrix weight $\mu$, the equation we have encountered will be degenerate (or singular) when $\mu$ (or $\mu^{-1}$) is unbounded. A striking example that one has in mind for this case is $\mathbb{A}(x) = |x|^{\pm \varepsilon}\mathrm{Id}_n$ with $\varepsilon>0$ small enough. In this case, the eigenvalues of $\mathbb{A}$ may approach zero or diverge to infinity, which leads to the loss of ellipticity. In general, the main difficulties arise from the fact of working with weight $\mu$, which makes the ellipticity of equations under consideration appears rather sensitive; it entails challenges and significantly restricts qualitative properties of the solutions. 

A special interest in the study of such linear degenerate elliptic equations is motivated by their applications in various fields, such as anisotropic diffusion in porous media, where the permeability/diffusivity tensor acts as a weight~\cite{PS2009, DH1998}. For typical example, when $\mathbb{A}(x) = |x|^{\pm \varepsilon}\mathrm{Id}_n$ as premised above, one describes the porous medium where the permeability/diffusivity vanishes (the material is nearly impermeable, no flows) or blows up (the material is extremely permeable, very strong flow) near the origin. In addition, equations with degenerate-singular weights also appear in mathematical finance via stochastic volatility models~\cite{Fee2014, Fee2015}, mathematical biology~\cite{EM2013}, conical spaces, Poincar\'e-Einstein manifolds in conformal geometry, the study of fractional elliptic equations~\cite{CG2011, STT2020}, and in many different contexts. 

Due to their own interest, the linear problems with degenerate coefficients and regularity properties for their solutions have been systematically investigated in various contributions. For instance, when the weight function $\mu$ is in the $\mathcal{A}_2$ class of Muckenhoupt weights and data $\mathbf{F}$ is nice enough, the H\"older continuity of weak solutions to~\eqref{eq:uni_ellip} was done by Fabes-Kenig-Serapioni in~\cite{FKS1982}. Later, in~\cite{CMP2018}, when $\mu \in \mathcal{A}_2$ and the assumption of smallness weighted BMO condition on $\mathbb{A}$, the $L^p$ and $W^{1,p}$ estimates have been investigated by Cao-Mengesha-Phan in the weighted settings. These results were later extended to global results in~\cite{Phan2020} and to the case of systems in~\cite{CMP2019}. Then, impressive advances have been shown by several authors, and a large number of connections with the topic of degenerate elliptic operators can be found in~\cite{CW1986, FKS1982, FGW1994, FL1982, KRSS2021}, as well as for further reading. In the spirit of Calder\'on-Zygmund estimates, we especially mention the works of Balci-Diening-Giova-Napoli in a celebrated paper~\cite{BDGN2022} and Balci-Byun-Diening-Lee in~\cite{BBDL2023} for the validity of local and global weighted gradient estimates enjoying minimal assumptions on the boundary $\partial\Omega$. It is worth noticing that these two latest papers deal, in fact, with the small BMO condition on $\mathrm{log}\mathbb{A}$ instead of the weighted BMO smallness condition mentioned in~\cite{CMP2018} (see Definition~\ref{def:log-BMO}). As shown, the modification brings many advantages when studying the sharp gradient regularity in a large variety of function spaces. 

The nonlinear equation with matrix-valued degenerate weight, involving $p$-Laplacian operator ($1<p<\infty$), is viewed as a natural generalization of the linear equation of elliptic type~\eqref{eq:uni_ellip}, and can be considered in the divergence form
\begin{align}\label{eq:degenerate_ellip}
\mathrm{div}[|\mathbb{W}(x)\nabla u|^{p-2}\mathbb{W}^2(x)\nabla u] = \mathrm{div}[|\mathbb{W}(x)\mathbf{F}|^{p-2}\mathbb{W}^2(x)\mathbf{F}], \quad x \in \Omega,
\end{align}
where $\mathbb{W}$ is also a symmetric, positive definite matrix satisfying $\mathbb{W} = \mathbb{A}^{\frac{1}{2}}$ and has a uniformly bounded condition number $\Lambda>0$, namely $|\mathbb{W}(x)||\mathbb{W}^{-1}(x)| \le \Lambda$, for all $x \in \Omega$ (the $|\cdot|$ denotes the spectral matrix norm, see Section~\ref{sec:pre}).  Advancing from the linear to the nonlinear case comes with several additional challenges. It is, therefore, interesting and significant to reach regularity structures for the gradient of solutions. We also emphasize here that for nonlinear elliptic equations driven by a degenerate regime in~\eqref{eq:degenerate_ellip}, the works~\cite{BDGN2022, BBDL2023} aforementioned in fact yield dual results, which allow to treat general, both linear and nonlinear degenerate problems. To be more specific, under an \emph{optimal} assumption that the small BMO seminorm is imposed to $\mathrm{log}\mathbb{A}$ and an appropriate geometric structure on the domain $\Omega$, gradient regularity for weak solutions to~\eqref{eq:degenerate_ellip} in weighted Lebesgue spaces has been first established, stating the implication 
\begin{align*}
\mathbf{F} \in L^\gamma(\Omega,\omega^\gamma dx) \Longrightarrow \nabla u \in L^\gamma(\Omega,\omega^\gamma dx), \quad \text{for every} \ \gamma>1,
\end{align*}
where, one writes  $\mu=\Lambda^{-1}\omega^2$ and Muckenhoupt weight $\omega$ acts in a multiplicative form, see Definition~\ref{def:LW_omega} for rigorous details.  Observing the main role of the \emph{$\log$-BMO condition} on the matrix-valued weights in nonlinear degenerate problems, the above-quoted papers have been a source of inspiration for our recent works~\cite{TN2025_NA, NTMN2025} in the line of research, where the generalized regularity estimates for solutions have been offered for both equations and obstacle problems in the context of variational inequalities. 

Starting from linear equations, the regularity theory has now expanded to a larger class of nonlinear elliptic equations, especially when operators have polynomial growth, involving $p(\cdot)$-Laplacian. Problems with variable exponents appear naturally in several physical problems and have been widely investigated by several contributions. The emblematic applications of nonlinear models with variable exponents come from the geometry of composite materials; the modeling of electrorheological and non-Newtonian fluids; gas or fluid flows through porous media; elasticity; image processing, and many others. We refer the readers to~\cite{Ruzicka2000, AM2005, RR2001, AS2005, CLR2006} and the references therein for more background on applications. It is standard to see that for these sets of problems, the exponents can vary according to the point; the classical frameworks are therefore not enough to derive the gradient regularity results. This limit is the primary motivation in our study to complement and extend the global optimal regularity to the case of more general problems with variable growth exponents, whose nonlinearities involve degenerate matrix weights $\mathbb{W}$, as mentioned earlier.

Further, the study of regularity theory for variational inequalities and free boundary problems, especially the obstacle problems, with numerous applications (such as elastic-plastic torsion, Hele-Shaw flow, tug-of-war game, pricing of American-type options in finance, etc), have also attracted enormous attention from the community during the past decades, see for example~\cite{Figalli2018, FRS2020, FGKS2024}. Roughly speaking, in the case of  two-sided obstacle problem, for given functions $\phi_1 \le \phi_2$ smooth enough, ones wish to construct the solutions $u$ that lie between the obstacles $\phi_1, \phi_2$ and conclude regularity properties of such solutions. It came to our attention that regularity results for obstacle problems can be seen as the extension of theoretical results developed for obstacle-free problems. However, a new class of elliptic operators, when one considers obstacle problems involving both variable exponent growth and singular-degenerate matrix weights, the key point of difficulty is concerned with the nonlinearity of the context, and it is highly non-trivial to deal with. Therefore, we believe that it would be mathematically interesting and significant to reach for regularity estimates for the gradient of solutions to these problems. And as far as we are concerned, the regularity results for such a class of nonlinear problems have not yet been completely investigated in the literature. This motivates us to further treat the regularity estimates, towards a better understanding of the solutions' properties. As such, our main results in this paper, with the self-contained techniques, are new in some sense. 

\textit{1.2. Problem setting.} In the spirit of the calculus of variations, the obstacle problems are closely connected with variational inequalities and free boundary problems, for which the solution is considered as a minimizer of a constrained energy functional over an admissible set of functions. Motivated by numerous models and recent studies on the smoothness properties of minimizers of integral functionals, the problem of interest in this paper is related to
\begin{align}\label{eq:general}
\text{minimize} \ \int_\Omega{\mathcal{P}(x,v(x),\nabla v(x))}, \quad  \text{among all}\ \  \phi_2 \le v \le \phi_2,
\end{align}
in some open bounded subset $\Omega \subset \mathbb{R}^n$ with nonsmooth boundary $\partial\Omega$, for $n \ge 2$; $\phi_1,\phi_2$ are two given functions; and $\mathcal{P}:\Omega \times \mathbb{R} \times \mathbb{R}^n \to \mathbb{R}$ is a Carath\'eodory-regular integrand satisfying a nonstandard growth condition. The study of regularity issues of minimizers of functional integrals, encompassing~\eqref{eq:general} or weak solutions to the associated variational inequality, is the subject of a large body of literature (see, for example, the references in~\cite{AM2005, BLOP2019, EH2010}). Moreover, in the context of problems involving matrix-valued weights, we shall include a matrix-valued function $\mathbb{W}: \Omega \to \mathbb{R}^{n\times n}_{\mathrm{sym}^+}$ that has a uniformly bounded condition number $\Lambda\ge 1$, which is equivalent to: 
\begin{align}\label{Pi-2}
\Lambda^{-1}\omega(x) \mathrm{Id}_n \le \mathbb{W}(x) \le \omega(x)\mathrm{Id}_n, \quad \text{or} \quad \Lambda^{-1}\omega(x)|\xi| \le |\mathbb{W}(x)\xi| \le \omega(x)|\xi|,
\end{align}
for almost every $x \in \Omega$ and all $\xi \in \mathbb{R}^n$, if one defines a new scalar weight $\omega:=|\mathbb{W}|$. Here, we propose the investigation of variable exponent, to set up the growth condition, let us consider the continuous function $p: \Omega \to (1,\infty)$ satisfying
\begin{align}\label{cond:pq1}
1 < p_{\mathrm{min}} \le p(\cdot) \le p_{\mathrm{max}} < \infty.
\end{align}
Due to the presence of a $p$-Laplacian-type operator with a variable exponent, it should be chased by considering special classes of function spaces. Suppose that $\omega^{p(\cdot)}$ belongs to the class of $\mathcal{A}_{p(\cdot)}$-Muckenhoupt weights (see Definition~\ref{def:Muck_VE}), the standard analysis on Lebesgue and Sobolev function spaces is then understood in the multiplicative weighted frameworks:  $L^{p(\cdot)}\big(\Omega,\omega^{p(\cdot)}\big)$ and $W^{1,p(\cdot)}\big(\Omega,\omega^{p(\cdot)}\big)$, respectively. And we send the reader to Section~\ref{sec:pre} for more details concerning their definitions. 

Given datum vector field $\mathbf{F}: \Omega \to \mathbb{R}^n$ such that $|\mathbf{F}| \in L^{p(\cdot)}\big(\Omega,\omega^{p(\cdot)}dx\big)$ and $\phi_1, \phi_2 \in W^{1,p(\cdot)}\big(\Omega,\omega^{p(\cdot)}dx\big)$ are two fixed obstacle functions such that 
\begin{align}\label{def-phi-12}
\phi_1\le \phi_2 \mbox{ almost everywhere in } \Omega \mbox{ and } \phi_1 \le 0 \le \phi_2 \mbox{ on } \partial\Omega,
\end{align}
we introduce the following convex set
\begin{align}\label{def-K}
\mathbb{K}_{\phi_1,\phi_2}^{\omega} := \left\{\phi \in W^{1,p(\cdot)}_{0}\big(\Omega,\omega^{p(\cdot)}\big): \ \phi_1 \le \phi \le \phi_2 \ \mbox{ a.e. in } \Omega\right\}.
\end{align}
In the present paper, our main two-obstacle problem can be nicely characterized by a variational inequality regarding~\eqref{eq:degenerate_ellip} in the variable exponent setting. More precisely, we mainly refer to the degenerate elliptic variational inequality of the type
\begin{align}\label{OP-var}
& \int_{\Omega} |\mathbb{W}(x)\nabla u|^{p(x)-2}\mathbb{W}^2(x)\nabla u \cdot  \nabla(u-\phi) dx  \le   \int_{\Omega} |\mathbb{W}(x)\mathbf{F}|^{p(x)-2}\mathbb{W}^2(x)\mathbf{F} \cdot \nabla (u-\phi) dx,
\end{align}
that holds for all test functions $\phi \in \mathbb{K}_{\phi_1,\phi_2}^\omega$ such that $\phi - u \in W^{1,p(\cdot)}_{0}\big(\Omega,\omega^{p(\cdot)}\big)$. We say that a weak solution to a two-obstacle problem, referring to degenerate-singular $p(\cdot)$-Laplacian type, is a map $u \in \mathbb{K}_{\phi_1,\phi_2}^{\omega}$ satisfying the variational inequality~\eqref{OP-var}. Although the emphasis of the paper is not on the existence, as $\omega^{p(\cdot)} \in \mathcal{A}_{p(\cdot)}$-Muckenhoupt class, the existence of weak solutions to~\eqref{OP-var} is pointed out based on the maximal monotone operator theory and variational arguments from the calculus of variations. 

Without any obstacles, the mathematical description turns back to the regularity theory for nonlinear equations with degenerate weights, and one can go through the works~\cite{CMP2018, Phan2020, CMP2019, BBDL2023, BDGN2022} for the precise statements of Calder\'on-Zygmund estimates discussed above. On the other hand, with \emph{constant exponent} $p$,  it is remarkable that the problem~\eqref{OP-var} reduces to double-obstacle problems driven by $p$-Laplacian with degenerate weights, we refer the reader to the work~\cite{BBDL2023} for Calder\'on-Zygmund estimates for weak solutions (with one-sided obstacle); and the generalized gradient bound has been addressed in our previous work~\cite{TN2025_NA} (with two-sided obstacle) under separate assumptions on the nonlinearities and the boundary of domains. In the last case, without the presence of degenerate weights, our obstacle problem is closely related to $p(\cdot)$-Laplacian, which has been studied by a myriad of investigations. 

\textit{1.3. Assumptions.} Before presenting and discussing the main achievement of this paper, let us specify the assumptions on the original problem under which we consider in this paper.\\[5pt] 
\textbf{Assumption $\mathbf{(H_1)}$:} Regarding the variable exponent, for which it allows the function $p$ to depend on the spatial variable $x$ satisfying~\eqref{cond:pq1} and the $\log$-H\"older continuity in $\Omega$ for some $\kappa \in \left( 0,\frac{1}{8} \right)$.  This assumption means that there exist constants $\kappa \in \left( 0,\frac{1}{8} \right)$, $r_0>0$ and a non-decreasing continuous function $\alpha: \mathbb{R}^+ \to \mathbb{R}^+$, $\alpha(0) = 0$ such that
\begin{align}\label{cond:ap}
\begin{cases}
|p(x) - p(y)| \le \alpha(|x-y|), \ \mbox{ for all } x, y \in \Omega,\\
\displaystyle{\sup_{0<r\le r_0}\alpha(r)\mathrm{log}\left(\frac{1}{r}\right)} \le \kappa,\ \ \text{or equivalently,} \quad \displaystyle{\sup_{0<r\le r_0}{\left( \frac{1}{r}\right)^{\alpha(r)}}} \le e^{\kappa}.
\end{cases}
\end{align} 
Let us briefly discuss the assumption $\mathbf{(H_1)}$ on variable exponent $p: \Omega \to (1,\infty)$. To our knowledge,~\eqref{cond:pq1} and $\log$-H\"older condition~\eqref{cond:ap} are important regularity assumptions to guarantee the oscillation of the function $p(\cdot)$ is well controlled and therefore, the boundedness of the maximal operators (in the corresponding variable $L^{p(\cdot)}$-spaces) and other results in harmonic analysis still hold in the setting of variable exponent (see~\cite{DHHR2017}). \\[5pt]
\textbf{Assumption $\mathbf{(H_2)}$:} The elliptic degenerate matrix-valued weight, as mentioned above, is defined as a map $\mathbb{W}: \Omega \to \mathbb{R}^{n\times n}_{\mathrm{sym}^+}$, which has a uniformly bounded condition number $\Lambda \ge 1$ (equivalently described in~\eqref{Pi-2}) and satisfies the so-called small $\log$-BMO seminorm. This condition allows the existence of a small constant $\kappa>0$ such that 
\begin{align}
\label{eq:logBMO_H2}
|\mathrm{log}\mathbb{W}|_{\mathrm{BMO}} \le \kappa.
\end{align}
It remarks here that in our strategy of proof, the value of such a small constant may change from line to line, depending on the arguments conducted in the context. Here, we also refer to Definition~\ref{def:log-BMO} and Remark~\ref{rem:H2} for a rigorous definition and interpretation of $|\mathrm{log}\mathbb{W}|_{\mathrm{BMO}}$, the $\log$-BMO seminorm of matrix weight $\mathbb{W}$.\\[5pt]
\textbf{Assumption $\mathbf{(H_3)}$:} To study the global gradient regularity, it is natural to make some regularity assumptions on the ground domain $\Omega$ because the geometric property of $\Omega$ strongly affects the regularity near the boundary points.  Inspired by the works of Byun \emph{et al.} in~\cite{BW2004, BY24, BLOP2019}, in our investigation, $\Omega$ is assumed to satisfy the $(\kappa,r_0)$-Reifenberg flat condition for some small constants $\kappa>0$ and $r_0>0$. The reader may refer to Section~\ref{sec:pre} for detailed definition and discussion on this minimal regularity assumption. 

It is worth remarking here that the assumptions $(\mathbf{H}_1)$, $(\mathbf{H}_2)$, and $(\mathbf{H}_3)$ are always linked to the couple $(\kappa,r_0)$. Further, the scaling parameter $\kappa>0$ is at our disposal, and its value may vary along the lines of proofs. The same letter $\kappa$ will generally be used to denote different parameters in the course of argument, and finally, $\kappa>0$ will possibly be chosen as the smallest one in an admissible set of such values. 

\textit{1.4. Main results and contribution of this work.} Our aim is, in particular, to highlight a global gradient bound for weak solutions to~\eqref{OP-var} in the setting of general Orlicz spaces under minimal assumptions on data, nonsmooth domain, and the nonlinearity of the differential operator involving degenerate matrix-valued weights. More precisely, $u \in \mathbb{K}_{\phi_1,\phi_2}^{\omega}$ a weak solution of~\eqref{OP-var} under assumptions $\mathbf{(H_1)}$-$\mathbf{(H_3)}$, we establish the following optimal global Cader\'on-Zygmund-type estimates
\begin{align}\label{ineq-main-1}
\int_{\Omega} \omega^{\gamma p(x)}|\nabla u|^{\gamma p(x)} dx \le C \left(1 + \int_{\Omega} [\mathbb{F}_{\omega}(x)]^{\gamma} dx\right),
\end{align}
for all $\gamma>1$, for some positive constant $C$ independent of data $\mathbf{F}$, obstacle functions $\phi_1,\phi_2$ and $u$. Here, for notational simplicity, we employ 
\begin{align}\label{def-UF}
\mathbb{F}_{\omega}(x) = \omega^{p(x)} \left(|\mathbf{F}|^{p(x)} + |\nabla \phi_1|^{p(x)} + |\nabla \phi_2|^{p(x)}\right), \quad x \in \Omega.
\end{align}
A distinctive trait of our results is that the estimates on level-sets will possibly be tracked with the least dependence of the chosen scaling parameter on the structural constants of the proposed method. The regularity result is then optimal in the sense that it facilitates the construction of a level-set estimate on feasible scaling parameter sets. We refer the reader in particular to the precise statement and argument of Theorem~\ref{theo-LV}, concerning the optimal dependence of scaling coefficients.

We also naturally derive a more general gradient estimate for solutions in the weighted Orlicz spaces, controlled by the \emph{fractional maximal operator}, that has profound and intriguing connections with fractional derivatives/integrals in the sense of Calder\'on spaces (see~\cite{KM2012}). In particular, we show a global estimate for weak solutions to~\eqref{OP-var} under the structure conditions $\mathbf{(H_1)}$-$\mathbf{(H_3)}$ of the following type
\begin{align}\label{ineq-main-2}
\int_{\Omega}\Phi\left(\mathbf{M}_{\beta}\big(\omega^{p(\cdot)} |\nabla u|^{p(\cdot)}\big)\right)dx & \le C \left[ 1 +  \int_{\Omega}\Phi\left(\mathbf{M}_{\beta}\mathbb{F}_{\omega}\right)dx\right],
\end{align}
for any increasing continuously differentiable function $\Phi: [0,\infty) \to [0,\infty)$ with $\Phi(0)=0$ and satisfying a doubling condition, see Definition~\ref{def:orlicz}. Here, it should mention $\mathbf{M}_\beta$-the fractional maximal operator, for $\beta \in [0,n)$, that will be crucial in our proofs later, see Definition~\ref{def:M_beta} and further discussions in Section~\ref{sec:pre}. Theorem~\ref{theo-Ge2} provides a detailed statement of this new result in weighted Orlicz spaces, which will be addressed in Section~\ref{sec:main}. 

\textit{1.5. Structure of the paper.} After the introductory section with an outlook on the considered problem, the existing results, and our main objectives, this paper is now organized as follows. In the next section, we first set generic notation, briefly recapitulate some standard definitions and technical preliminary lemmas that will be necessary for the rest of this paper. In Section~\ref{sec:comparison}, we shall deal with comparison principles, which form the key tool in our argument, and cause a number of new challenges to deal with due to the two-sided obstacles and the structure of the operator. Later, Section~\ref{sec:main} will describe in detail the new results obtained in this study via the formal statements of Theorem~\ref{theo-Ge1} and~\ref{theo-Ge2}, respectively. And the proofs of the main results are also exposed in this section. Finally, Section~\ref{sec:future} is devoted to some conclusions and discussions in which the novelty is incorporated through degenerate-singular models and our method. Further, some possible topics discussed in this section might also be of interest to many authors in this field. 

\section{Preliminaries}\label{sec:pre}

This section is devoted to some notation, basic definitions, and necessary background on function spaces that our study relies on. Furthermore, this section also contains preliminary lemmas regarding the two-obstacle problem that are useful throughout the text.

\textit{2.1. Basic notation and definitions.} We first record a minimum amount of notations needed for our analysis later. Throughout the paper, let $n \ge 2$ and $\Omega \subset \mathbb{R}^n$ be an open bounded subset with non-smooth boundary $\partial\Omega$, with its diameter often denoted by $\mathrm{diam}(\Omega)$. For any point $x_0 \in \mathbb{R}^n$ and $R>0$, we define $B_R(x_0)$ the Euclidean open ball centered at $x_0$ with radius $R$. When the information of the center is clear from the context, the simplified notations $B_R$ and $\Omega_R:=\Omega \cap B_{R}$ will be employed if there is no confusion caused. Moreover, if $\mathcal{V} \subset \mathbb{R}^n$ is a measurable set, we shall denote $|\mathcal{V}|$ its Lebesgue measure. In this study, by the writing $\mathcal{M}eas(\Omega)$, it implicitly means the set of all Lebesgue measurable functions on $\Omega$. And for $f \in \mathcal{M}eas(\Omega)$, to simplify the notation in later arguments, we simply write $\{x \in \Omega: \, |f(x)|>t\}$ for some $t \ge 0$ by $\{|f|>t\}$. For an integrable map $f: \mathcal{V} \subset \Omega \to \mathbb{R}$ and $\mathcal{V}$ has finite positive measure, $\fint_{\mathcal{V}}$ or $\overline{f}_{\mathcal{V}}$ denotes the average value of $f$, defined as
$$\overline{f}_{\mathcal{V}} := \fint_{\mathcal{V}} f(x) dx = \frac{1}{|\mathcal{V}|} \int_{\mathcal{V}} f(x) dx.$$
Here, we say that $\omega$ is a (scalar) weight function if $\omega: \Omega \to \mathbb{R}^+$ is measurable and positive almost everywhere in $\Omega$. It is to be noticed that in the general context of the problem~\eqref{OP-var}, for a weight $\omega \in L^1_{\mathrm{loc}}(\mathbb{R}^n;\mathbb{R}^+)$ and a measurable subset $\mathcal{V} \subset \mathbb{R}^n$, it often writes
$$\omega(\mathcal{V}) :=\int_{\mathcal{V}}\omega(x)dx.$$ 
In our estimates, all generic positive constants whose exact value is not important for the purpose will be denoted by $C$ ($C$ may differ in a single chain of inequalities). We often represent positive constants $C(\cdot)$ with parentheses, including the parameters on which the constant depends. Moreover, \emph{data of problem}, which stays in our initial settings of the problem, will be collected in a set of relevant prescribed parameters. And for the sake of brevity, we introduce
\begin{align*}
\textsc{dataset} := \{\beta,n,p_{\mathrm{min}},p_{\mathrm{max}},r_0/\mathrm{diam}(\Omega),\Lambda\}.
\end{align*}

We denote by $\mathbb{R}^{n\times n}_{\mathrm{sym}}$ the set of all symmetric, real-valued square matrices of order $n$. To make the notation fairly self-explanatory, we further identify its subset $\mathbb{R}^{n\times n}_{\mathrm{sym}^+}$, including all symmetric and positive definite matrices. In this space, we consider the spectral norm, defined as follows
$$|\mathsf{M}| = \sup_{|\zeta|\le 1} |\mathsf{M}\zeta|, \ \mbox{ for } \ \mathsf{M} \in \mathbb{R}^{n \times n}_{\mathrm{sym}}.$$ 

\begin{definition}[Matrix weight]
\label{def:matrix_weight}
A matrix-map $\mathbb{W}: \Omega \to \mathbb{R}^{n \times n}_{\mathrm{sym}^+}$ is called a matrix-valued weight if $\mathbb{W}$ is positive definite for almost every $x \in \Omega$. Equivalently, $\mathbb{W}(x)$ is a symmetric, positive definite matrix for almost every $x \in \Omega$. 
\end{definition}
\begin{definition}[Logarithmic mean of weight]
\label{def:log_mean}
The logarithmic mean of a weight $\omega$ over a ball $B \subset \mathbb{R}^n$ will be defined by
\begin{align}\label{log-w}
\langle \omega\rangle_B^{\mathrm{log}} := \exp \left(\fint_B \mathrm{log}(\omega(x)) dx\right).
\end{align}
\end{definition}

\begin{remark} $ $
\begin{itemize}
\item[-] It is remarkable that the map $\exp: \mathbb{R}^{n \times n}_{\mathrm{sym}} \to \mathbb{X}_+$ and its inverse $\mathrm{log}: \mathbb{R}^{n \times n}_{\mathrm{sym}^+} \to \mathbb{R}^{n \times n}_{\mathrm{sym}}$ are defined by transforming the matrix into diagonal form via the spectral decomposition. More precisely, for every matrix $\mathsf{M} \in \mathbb{R}^{n \times n}_{\mathrm{sym}^+}$, it allows us to define the exponential matrix $\exp(\mathsf{M})$ and its inverse $\mathrm{log}(\mathsf{M})$ by convergent Taylor series. 
\item[-] From Definition~\ref{def:log_mean}, it is not difficult to verify that
\begin{align}\label{pro-1-w}
\left\langle \frac{1}{\omega}\right\rangle_B^{\mathrm{log}}\cdot \langle \omega\rangle_B^{\mathrm{log}} = \exp\left(-\fint_B \mathrm{log}(\omega(x))dx\right) \exp\left(\fint_B \mathrm{log}(\omega(x))dx\right) = 1. 
\end{align}
Moreover, another interesting property is that, for $q \in (1,\infty)$, $\omega^q \in \mathcal{A}_{q}$ (Muckenhoupt class) if and only if the following two inequalities hold: 
\begin{align}\notag
\left(\fint_B \omega(x)^q dx \right)^{\frac{1}{q}} \le C \langle \omega\rangle_B^{\mathrm{log}}, \ \mbox{ and } \ \left(\fint_B \omega(x)^{-\frac{q}{q-1}}dx\right)^{1-\frac{1}{q}} \le C \langle 1/\omega\rangle_B^{\mathrm{log}}.
\end{align}
for any ball $B \subset \mathbb{R}^n$. 
\item[-] If $\mathbb{W}$ is a matrix weight as in Definition~\ref{def:matrix_weight} and $B \subset \mathbb{R}^n$ is an open ball, then \begin{align}\label{log-PI}
\langle \mathbb{W}\rangle_B^{\mathrm{log}} := \exp \left(\fint_B \mathrm{log} \mathbb{W}(x) dx\right),
\end{align}
and in the same spirit of~\eqref{pro-1-w}, from~\eqref{log-PI}, we also are able to validate
\begin{align*}
\langle \mathbb{W}^{-1}\rangle_B^{\mathrm{log}} \langle \mathbb{W}\rangle_B^{\mathrm{log}} = \left[\exp\left(-\fint_B \mathrm{log}\mathbb{W}(x)dx\right)\right]  \left[\exp \left(\fint_B \mathrm{log}\mathbb{W}(x)dx\right)\right]^{-1} = \mathrm{Id}_n.
\end{align*}
\end{itemize}
\end{remark}

\begin{definition}[$\log$-BMO seminorm]
\label{def:log-BMO}
Let $B$ be an open ball in $\mathbb{R}^n$ and $\mathbb{W}$ be a matrix weight. The $\log$-$\mathrm{BMO}$ semi-norm of $\mathbb{W}$ over $B$ is defined by
\begin{align}\label{BMO-norm}
|\mathrm{log} \mathbb{W}|_{\mathrm{BMO}(B)} := \sup_{B_\rho \subset B} \fint_{B_\rho} |\mathrm{log} \mathbb{W} - \langle \mathbb{W}\rangle_{B_\rho}^{\mathrm{log}}| dx,
\end{align}
where the supremum is taken over all balls $B_\rho \subset B$. For simplicity, we simply write $|\mathrm{log} \mathbb{W}|_{\mathrm{BMO}}$ whenever $B \supset \Omega$.  
\end{definition}

\begin{remark}
\label{rem:H2}
Let us here briefly discuss the assumption $\mathbf{(H_2)}$, regarding the small $\log$-BMO condition described in~\eqref{eq:logBMO_H2} for the matrix weight $\mathbb{W}$. A natural question arises here: what is the role of the logarithm in this case? And why is the term $\mathrm{log}\mathbb{W}$ bounded instead of $\mathbb{W}$ in proving regularity? - Let us incidentally go back to the structure of problem~\eqref{OP-var}, the uniform ellipticity is violated, and it requires an extra condition on $\mathbb{W}$ (whose values may change in a large range) to control its oscillation. Fabes-Kenig-Serapioni in~\cite{FKS1982} first imposed the scalar weight $\omega$ in~\eqref{Pi-2} to belong to $\mathcal{A}_2$-class to control the integrability of weight for linear problems, Cao-Mengesha-Phan~\cite{CMP2018} added a smallness condition on the weighted BMO norm of $\mathbb{W}$. Later, a notable breakthrough with the smallness condition on the BMO norm for $\mathrm{log}\mathbb{W}$ (or for $\mathrm{log}\mathbb{A}=\frac{1}{2}\mathrm{log}\mathbb{W}$) was proposed by Balci-Diening-Giova-DiNapoli~\cite{BDGN2022} and Balci-Byun-Diening-Lee~\cite{BBDL2023}. The \emph{small $\log$-BMO condition} is stated as a minimal regularity assumption to control the oscillation of the matrix-weight's eigenvalues. It is known that with the standard logarithm, it transforms a multiplicative structure into an additive one, and therefore, the size and scaling of the matrix (determined by its eigenvalues) are well controlled. This makes it very interesting to deal with the degree of oscillation, integrability, and regularity. Assembling these reasons, and thanks to the inspiration from the papers mentioned above, we also make use of the assumption $\mathbf{(H_2)}$, which is minimal, in a sense, for gradient regularity to hold. 
\end{remark}

For a continuous function $p: \Omega \to (1,\infty)$ satisfying~\eqref{cond:pq1} and $\log$-H\"older condition~\eqref{cond:ap}, let us define the modular function as below:
\begin{align}\notag
\mathrm{mod}^{p(\cdot)}(f) = \int_{\Omega} |f(x)|^{p(x)} dx, \quad \mbox{ for } f \in \mathcal{M}eas(\Omega).
\end{align}  

\begin{definition}[Variable Lebesgue spaces]
\label{def:LW}
Given variable function $p(\cdot)$ satisfying conditions in~\eqref{cond:pq1} and~\eqref{cond:ap}, then the variable Lebesgue space $L^{p(\cdot)}(\Omega)$ is defined by the set of all functions $f \in \mathcal{M}eas(\Omega)$ for which $\mathrm{mod}^{p(\cdot)}(f)$ is finite. Moreover, $L^{p(\cdot)}(\Omega)$ is endowed with the Luxemburg norm given by
\begin{align*}
\|f\|_{L^{p(\cdot)}(\Omega)} := \inf \left\{s>0: \ \mathrm{mod}^{p(\cdot)}\left(\frac{f}{s}\right) \le 1 \right\}.
\end{align*}
\end{definition}

\begin{definition}[Variable Lebesgue and Sobolev spaces in the intrinsic weighted setting]
\label{def:LW_omega}
For given continuous function $p(\cdot)$ satisfying conditions~\eqref{cond:pq1},~\eqref{cond:ap} and a weight function $\omega: \Omega \to \mathbb{R}^+$, assume that $\omega^{p(\cdot)} \in L^1_{\mathrm{loc}}(\Omega)$ and $\omega^{-p'(\cdot)} \in L^1_{\mathrm{loc}}(\Omega)$. Then, the variable Lebesgue space in the weighted setting, denoted by $L^{p(\cdot)}(\Omega,\omega^{p(\cdot)})$, is defined as the set of all measurable functions $f$ such that $\omega f \in L^{p(\cdot)}(\Omega)$ (as presented in Definition~\ref{def:LW}). In addition, this space is also a Banach space equipped with the following norm 
\begin{align*}
\|f\|_{L^{p(\cdot)}(\Omega,\omega^{p(\cdot)})} := \|\omega f\|_{L^{p(\cdot)}(\Omega)}.
\end{align*}
In this case, we further define the weighted variable Sobolev space with multiplicative weight $W^{1,p(\cdot)}(\Omega,\omega^{q(\cdot)})$, as follows
\begin{align*}
W^{1,p(\cdot)}(\Omega,\omega^{p(\cdot)}) = \left\{f \in L^{p(\cdot)}(\Omega,\omega^{p(\cdot)}): \, |\nabla f| \in L^{p(\cdot)}(\Omega,\omega^{p(\cdot)}) \right\},
\end{align*}
and this space is also endowed with the norm 
$$\|f\|_{W^{1,p(\cdot)}(\Omega,\omega^{p(\cdot)})} = \|f\|_{L^{p(\cdot)}(\Omega,\omega^{p(\cdot)})} + \|\nabla f\|_{L^{p(\cdot)}(\Omega,\omega^{p(\cdot)})}.$$ 
We also write $W^{1,p(\cdot)}_{0}(\Omega,\omega^{p(\cdot)})$ as a subspace of $W^{1,p(\cdot)}(\Omega,\omega^{p(\cdot)})$ with zero-values on the boundary.  
\end{definition}

\begin{definition}[Variable Lebesgue and Sobolev spaces in the standard weighted setting]
Let $p(\cdot)$ be a continuous function satisfying conditions~\eqref{cond:pq1},~\eqref{cond:ap} and a weight function $\omega: \Omega \to \mathbb{R}^+$. For a fixed constant $q \in [p_{\mathrm{min}},p_{\mathrm{max}}]$, we define the weighted Lebesgue space, denoted by $L^{q}(\Omega,\omega^{q})$, is the set of all functions $f \in \mathcal{M}eas(\Omega)$ such that $\omega f \in L^q(\Omega)$. Moreover, in the (standard) weighted setting, we also consider the variable Lebesgue space $L^{p(\cdot)}(\Omega,\omega^{q})$, that is tailored for our purposes, with the following modular function
\begin{align}\notag
\mathrm{mod}^{p(\cdot)}_{\omega,q}(f) = \int_{\Omega} \omega^{q} |f(x)|^{p(x)}  dx, \quad \mbox{ for } f \in \mathcal{M}eas(\Omega).
\end{align}
Similar to the classical Sobolev spaces, the Sobolev spaces in this weighted setting are built upon the associated Lebesgue spaces. We will use the notations $W^{1,q}(\Omega,\omega^{q})$ and $W^{1,p(\cdot)}(\Omega,\omega^{q})$ with respect to Lebesgue spaces $L^{q}(\Omega,\omega^{q})$ and $L^{p(\cdot)}(\Omega,\omega^{q})$.
\end{definition}

\begin{definition}[Muckenhoupt class]
\label{def:Muck_classical}
Let $1 < s < \infty$, the Muckenhoupt class $\mathcal{A}_s$ is specified by the set of all weights $\omega \in L^1_{\mathrm{loc}}(\mathbb{R}^n)$ satisfying
\begin{align*}
[\omega]_{\mathcal{A}_{s}} := \displaystyle \sup_{B \subset \mathbb{R}^n} \left(\fint_{B} \omega dx\right) \left(\fint_{B} \omega^{-\frac{1}{s-1}} dx\right)^{s-1} < \infty,
\end{align*}
where $[\omega]_{\mathcal{A}_{s}}$ is called the $\mathcal{A}_s$-characteristic of $\omega$. Then, $\omega$ is called an $\mathcal{A}_s$-Muckenhoupt weight. Moreover, the Muckenhoupt class $\mathcal{A}_{\infty}$ is defined by the union of all $\mathcal{A}_{s}$ for all $s > 1$, which means that
$$\mathcal{A}_{\infty} = \displaystyle{\bigcup_{s > 1} \mathcal{A}_{s}}.$$
We recall that if $\mu \in \mathcal{A}_{\infty}$ then there exist $C>0$ and $\theta>0$ such that
\begin{align}\label{Muck-w}
\mu(\mathcal{V}) \le C \left[\frac{|\mathcal{V}|}{|B|}\right]^{\theta} \mu(B), 
\end{align}
for all ball $B \supset \mathcal{V}$ and measurable set $\mathcal{V}$. If \eqref{Muck-w} is valid, we write $[\mu]_{\mathcal{A}_{\infty}} = (C,\theta)$.
\end{definition}

\begin{definition}[Muckenhoupt class in the variable exponent setting]
\label{def:Muck_VE}
Let $p(\cdot)$ be a continuous function satisfying conditions~\eqref{cond:pq1},~\eqref{cond:ap} and a weight function $\omega: \Omega \to \mathbb{R}^+$. Then, we say that the weight $\omega^{p(\cdot)}$ belongs to the $\mathcal{A}_{p(\cdot)}$-Muckenhoupt class if and only if
\begin{align}\notag
\sup_{B} \max_{q \in \{q_1,q_2\}} \frac{1}{|B|^{p_B}} \left(\fint_{B} \omega(x)^{p(x)}dx\right) \left(\fint_{B} \omega(x)^{-p'(x)} dx\right)^{q} < \infty, 
\end{align}
where the supremum is taken over all balls $B$ in $\mathbb{R}^n$, and $q_1$, $q_2$, $p_B$ are respectively given by 
$$q_1 = \min_{x\in B}(p(x)-1), \ q_2 = \max_{x\in B}(p(x)-1) \ \mbox{ and } \ p_B = \left(\fint_B p(x) dx\right)^{-1}.$$
\end{definition}

\begin{definition}[$(\kappa,r_0)$-Reifenberg flatness]
Given $\kappa>0$ and $r_0>0$, we say that the domain $\Omega$ satisfies  $(\kappa,r_0)$-Reifenberg flatness, if for any $x \in \partial \Omega$ and $0<r<(1-\kappa)r_0$, there exists a coordinate system $\{\tilde{x}_1, \tilde{x}_2,\cdots,\tilde{x}_n\}$ such that the new origin $\tilde{\mathbf{0}} \in \Omega$, $x =(0,0,...,0, - r\kappa/(1-\kappa)\tilde{x}_n)$ and
\begin{align}\notag 
B(\tilde{\mathbf{0}},r) \cap \{\tilde{x}_n > 0\} \subset B(\tilde{\mathbf{0}},r) \cap \Omega \subset B(\tilde{\mathbf{0}},r) \cap \left\{\tilde{x}_n >  \frac{-2 r\kappa}{1-\kappa}\right\},
\end{align}
where, we will rather write $\{\tilde{x}_n > \tau\}$ instead of $\{\tilde{x} = (\tilde{x}_1, \tilde{x}_2,\cdots, \tilde{x}_n): \ \tilde{x}_n > \tau\}$, with an intentional abuse of notation.
\end{definition}

\begin{remark}
\label{rem:H3}
The concept of a nonsmooth domain satisfying $(\kappa, r_0)$-Reifenberg flatness was first introduced by Reifenberg in~\cite{Reifenberg} in the minimal surface theory and explored by Byun-Wang in~\cite{BW2004} in the regularity theory. This type of domain, whose boundary could be fractal (such as blood vessels, the internal structure of lungs, models of the stock market, etc),  is a natural generalization of Lipschitz domain with a small Lipschitz constant $\kappa>0$. In global regularity theory, the main obstruction is how to obtain regularity up to the boundary, and the geometric property of the domain near the boundary is important. The approach with $(\kappa, r_0)$-Reifenberg condition, after the work of Byun-Wang, inspired several subsequent developments in this line of research. In the spirit of assumption $\mathbf{(H_3)}$, it is worth mentioning that the Reifenberg flatness is meaningful for a certain range of $\kappa$, and in this work, we assume that $0<\kappa<\frac{1}{8}$.
\end{remark}

\begin{definition}[Fractional maximal functions]
\label{def:M_beta}
For a given $\beta \in [0, n)$, the maximal fractional function, usually written by $\mathbf{M}_\beta$, is a measurable map defined on $L^1_{\mathrm{loc}}(\mathbb{R}^n)$ as follows
\begin{align}\label{Malpha}
\mathbf{M}_\beta {f}(x) = \sup_{\rho>0} \rho^{\beta} \fint_{B_\rho(x)} |{f}(z)| dz, 
\end{align}
for all $x \in \mathbb{R}^n$ and ${f} \in L^1_{\mathrm{loc}}(\mathbb{R}^n)$. Corresponding to $\mathbf{M}_\beta$, let us also further define two ``cut-off'' fractional maximal functions of order $R>0$ as follows
\begin{align*}
\mathbf{M}_\beta^R f(x) = \sup_{0<\rho<R}{\rho^\beta \fint_{B_\rho(x)}{|f(z)| dz}}, \\
\mathbf{T}_\beta^R f(x) = \sup_{\rho\ge R}{\rho^\beta \fint_{B_\rho(x)}{|f(z)| dz}}.
\end{align*}
\end{definition}
\begin{proposition}[Boundedness of fractional maximal operators in Lebesgue spaces]
\label{lem:bound-M-beta}
For given $s \ge 1$, $\beta \in \left[0,\frac{n}{s}\right)$ and $f \in L^{s}(\mathbb{R}^n)$. Then, there exists a constant $C=C(n,s,\beta)>0$ such that for any $\lambda>0$, one has
\begin{align*}
\left|\left\{x \in \mathbb{R}^n: \ \mathbf{M}_{\beta}f(x)>\lambda\right\}\right| \le C \left(\lambda^{-s}\int_{\mathbb{R}^n}|f(z)|^{s} dz\right)^{\frac{n}{n-\beta s}}.
\end{align*}
\end{proposition}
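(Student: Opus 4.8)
The statement to prove is the weak-type bound for the fractional maximal operator $\mathbf{M}_\beta$: for $s \ge 1$, $\beta \in [0, n/s)$ and $f \in L^s(\mathbb{R}^n)$,
\[
|\{\mathbf{M}_\beta f > \lambda\}| \le C\left(\lambda^{-s}\int_{\mathbb{R}^n}|f|^s\,dz\right)^{\frac{n}{n-\beta s}}.
\]

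The plan is to reduce this to the classical Hardy–Littlewood maximal inequality via a ``dimensional shift'' trick that trades the fractional power $\rho^\beta$ for an ordinary average. First I would set up notation: write $M = M_0$ for the (uncentered or centered — doesn't matter here) Hardy–Littlewood maximal operator, and recall that $M$ is weak-$(1,1)$, i.e. $|\{Mg > t\}| \le \frac{C_n}{t}\|g\|_{L^1}$, and more generally bounded on $L^q$ for $q > 1$. The key algebraic observation is that for any ball $B_\rho(x)$,
\[
\rho^\beta \fint_{B_\rho(x)}|f|\,dz = c_n^{-1}\rho^{\beta - n}\int_{B_\rho(x)}|f|\,dz
= c_n^{-1}\left(\int_{B_\rho(x)}|f|\,dz\right)^{\theta}\left(\rho^{\frac{\beta-n}{1-\theta}}\int_{B_\rho(x)}|f|\,dz\right)^{1-\theta}
\]
for a suitable exponent $\theta \in (0,1)$. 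Choosing $\theta$ so that $\frac{\beta - n}{1-\theta}$ equals... actually the cleaner route: choose $q$ with $1 \le q < n/\beta$ (and bound the level set using $L^q$ when $s > 1$, or $L^1$ when $s=1$ after a preliminary remark), and observe
\[
\rho^\beta \fint_{B_\rho(x)}|f| \le \left(\fint_{B_\rho(x)}|f|^q\right)^{1/q}\cdot\left(c_n \rho^n\right)^{\beta/n},
\]
no — let me do the standard clean decomposition directly below.

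Second, the heart of the argument: I would split $|f| = |f|\mathbbm{1}_{\{|f|\le t\}} + |f|\mathbbm{1}_{\{|f|>t\}} =: f_1 + f_2$ for a threshold $t = t(\lambda)$ to be optimized (a Calderón–Zygmund-type truncation). For the ``bad'' part, I would estimate, using $\beta < n/s$, that $\rho^\beta\fint_{B_\rho}|f_2| \le \rho^{\beta}\fint_{B_\rho}|f_2| \cdot$ ... here the key is to control $\rho^\beta$ by noting $\rho^\beta\fint_{B_\rho}|f_2| = \rho^{\beta - n/s}\cdot \rho^{n/s}\fint_{B_\rho}|f_2|$ and then use Hölder: $\rho^{n/s}\fint_{B_\rho}|f_2| \le c_n^{1-1/s}\left(\fint_{B_\rho}|f_2|^s\right)^{1/s}\rho^{n/s}\cdot\rho^{-n/s} = c_n^{1-1/s}\|f_2\|_{L^s}$ up to constants. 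Since $\beta - n/s < 0$, this blows up as $\rho \to 0$, so this crude bound alone isn't enough — instead the correct standard move is: for the fractional maximal function one directly has the pointwise bound $\mathbf{M}_\beta f(x) \le \left(\mathbf{M}f(x)\right)^{1-\beta s/n}\|f\|_{L^s(\mathbb{R}^n)}^{\beta s / n}$ when... hmm, that requires $s$-averages. The robust path I will actually follow: prove the pointwise inequality
\[
\mathbf{M}_\beta f(x) \le C\,\big(\mathbf{M}(|f|^s)(x)\big)^{\frac{1}{s} - \frac{\beta}{n}}\,\|f\|_{L^s(\mathbb{R}^n)}^{\frac{\beta}{n}\cdot s}
\]
— wait, dimensional check: inside, $\rho^\beta\fint_{B_\rho}|f| \le \rho^\beta\left(\fint_{B_\rho}|f|^s\right)^{1/s}\le \rho^\beta\left(\rho^{-n}\int_{\mathbb{R}^n}|f|^s\right)^{1/s} = \rho^{\beta - n/s}\|f\|_{L^s}$; combine with $\rho^\beta\fint_{B_\rho}|f| \le \rho^\beta \big(M(|f|^s)(x)\big)^{1/s}$ (no $\rho$-decay) and interpolate over $\rho$: the first is good for large $\rho$, the second for small $\rho$. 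Optimizing the crossover in $\rho$ gives exactly $\mathbf{M}_\beta f(x)^{\,n-\beta s} \le C\, M(|f|^s)(x)^{\,n/s - \beta}\cdot\big(\|f\|_{L^s}^s\big)^{\beta}$, i.e. $\mathbf{M}_\beta f(x) \le C\,\big(M(|f|^s)(x)\big)^{\frac{1}{s}(1 - \beta s/n)}\|f\|_{L^s}^{\beta s/n}$.

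Third and final: apply this pointwise bound. We get
\[
\{\mathbf{M}_\beta f > \lambda\} \subseteq \left\{ M(|f|^s) > c\left(\lambda\,\|f\|_{L^s}^{-\beta s/n}\right)^{\frac{sn}{n-\beta s}}\right\},
\]
and then invoke the weak-$(1,1)$ bound for $M$ applied to the $L^1$ function $|f|^s$:
\[
|\{M(|f|^s) > \tau\}| \le \frac{C_n}{\tau}\int_{\mathbb{R}^n}|f|^s\,dz,
\]
with $\tau = c\big(\lambda\|f\|_{L^s}^{-\beta s/n}\big)^{\frac{sn}{n-\beta s}}$. Plugging in and simplifying the exponents on $\|f\|_{L^s}^s = \int|f|^s$ yields precisely $|\{\mathbf{M}_\beta f > \lambda\}| \le C\big(\lambda^{-s}\int|f|^s\big)^{\frac{n}{n-\beta s}}$, which is the claim (the case $\beta = 0$ degenerates to the ordinary weak-$(1,1)$ bound for $M(|f|^s)$, i.e. $s\cdot(n/n) = s$-th power, consistent). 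The only place requiring care — and the ``main obstacle'' in a sense, though it is routine — is bookkeeping the exponents in the interpolation over $\rho$ and verifying the final powers match; the condition $\beta < n/s$ is exactly what makes $n - \beta s > 0$ so that all exponents are well-defined and positive. I would organize the writeup as: (i) the pointwise interpolation lemma with the $\rho$-optimization, (ii) reduction to weak-$(1,1)$ for $M$ on $|f|^s \in L^1$, (iii) exponent arithmetic.

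\begin{proof}[Proof of Proposition~\ref{lem:bound-M-beta}]
We may assume $f \not\equiv 0$ and write $\|f\|_{L^s} := \|f\|_{L^s(\mathbb{R}^n)} \in (0,\infty)$. Denote by $M g(x) = \sup_{\rho>0}\fint_{B_\rho(x)}|g(z)|\,dz$ the Hardy–Littlewood maximal function, which satisfies the weak-type $(1,1)$ estimate $|\{Mg>\tau\}| \le C_n\tau^{-1}\|g\|_{L^1(\mathbb{R}^n)}$ for all $\tau>0$ and $g \in L^1(\mathbb{R}^n)$.

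\emph{Step 1: a pointwise bound.} Fix $x \in \mathbb{R}^n$ and $\rho>0$. By Hölder's inequality,
\begin{align}\notag
\rho^\beta\fint_{B_\rho(x)}|f(z)|\,dz \le \rho^\beta\left(\fint_{B_\rho(x)}|f(z)|^s\,dz\right)^{\frac{1}{s}} \le \rho^\beta \big(M(|f|^s)(x)\big)^{\frac{1}{s}},
\end{align}
and, separately, estimating the average by the global integral,
\begin{align}\notag
\rho^\beta\fint_{B_\rho(x)}|f(z)|\,dz \le \rho^\beta\left(\frac{1}{c_n\rho^n}\int_{\mathbb{R}^n}|f(z)|^s\,dz\right)^{\frac{1}{s}} = c_n^{-\frac{1}{s}}\,\rho^{\beta-\frac{n}{s}}\,\|f\|_{L^s},
\end{align}
where $c_n = |B_1(0)|$. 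Using the first bound when $\rho$ is small and the second when $\rho$ is large, and optimizing over $\rho>0$ (the two right-hand sides coincide at $\rho^{n/s} = c_n^{-1/s}\|f\|_{L^s}\big(M(|f|^s)(x)\big)^{-1/s}$; recall $\beta<n/s$ so the exponent $\beta-n/s$ is negative and the second bound is decreasing in $\rho$), we obtain
\begin{align}\label{eq:pointwise-Mbeta}
\mathbf{M}_\beta f(x) \le C\,\big(M(|f|^s)(x)\big)^{\frac{1}{s}\left(1-\frac{\beta s}{n}\right)}\,\|f\|_{L^s}^{\frac{\beta s}{n}},
\end{align}
with $C = C(n,s,\beta)>0$. (When $\beta=0$ this is trivial with $C=1$.)

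\emph{Step 2: from the pointwise bound to the weak-type estimate.} Let $\lambda>0$. Since $1-\tfrac{\beta s}{n}>0$, estimate~\eqref{eq:pointwise-Mbeta} gives
\begin{align}\notag
\big\{x \in \mathbb{R}^n:\ \mathbf{M}_\beta f(x)>\lambda\big\} \subseteq \left\{x \in \mathbb{R}^n:\ M(|f|^s)(x) > \tau_\lambda\right\}, \qquad \tau_\lambda := \left(C^{-1}\lambda\,\|f\|_{L^s}^{-\frac{\beta s}{n}}\right)^{\frac{sn}{n-\beta s}}.
\end{align}
Because $|f|^s \in L^1(\mathbb{R}^n)$ with $\||f|^s\|_{L^1} = \|f\|_{L^s}^s$, the weak-type $(1,1)$ bound for $M$ yields
\begin{align}\notag
\big|\big\{\mathbf{M}_\beta f>\lambda\big\}\big| \le \big|\big\{M(|f|^s)>\tau_\lambda\big\}\big| \le \frac{C_n}{\tau_\lambda}\,\|f\|_{L^s}^s = C_n\,C^{\frac{sn}{n-\beta s}}\,\lambda^{-\frac{sn}{n-\beta s}}\,\|f\|_{L^s}^{\frac{\beta s^2}{n-\beta s}}\,\|f\|_{L^s}^s.
\end{align}
Collecting the powers of $\|f\|_{L^s}$, one has $\dfrac{\beta s^2}{n-\beta s} + s = \dfrac{\beta s^2 + s(n-\beta s)}{n-\beta s} = \dfrac{sn}{n-\beta s}$, hence
\begin{align}\notag
\big|\big\{\mathbf{M}_\beta f>\lambda\big\}\big| \le C(n,s,\beta)\,\left(\lambda^{-s}\,\|f\|_{L^s}^s\right)^{\frac{n}{n-\beta s}} = C(n,s,\beta)\,\left(\lambda^{-s}\int_{\mathbb{R}^n}|f(z)|^s\,dz\right)^{\frac{n}{n-\beta s}},
\end{align}
which is the desired inequality.
\end{proof}
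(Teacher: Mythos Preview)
Your proof is correct. The paper itself does not prove this proposition at all---it simply refers the reader to~\cite{PNJDE} for the details---so your argument supplies strictly more than what appears in the text. The route you take is the classical Hedberg-type pointwise interpolation: bound $\rho^\beta\fint_{B_\rho}|f|$ from above both by $\rho^\beta(M(|f|^s))^{1/s}$ (good for small $\rho$) and by $C\rho^{\beta-n/s}\|f\|_{L^s}$ (good for large $\rho$), optimize in $\rho$ to get the pointwise inequality $\mathbf{M}_\beta f \le C\,(M(|f|^s))^{(n-\beta s)/(sn)}\|f\|_{L^s}^{\beta s/n}$, and then reduce to the weak-$(1,1)$ bound for the Hardy--Littlewood maximal operator applied to $|f|^s\in L^1$. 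The exponent bookkeeping in Step~2 is carried out correctly, and the hypothesis $\beta<n/s$ is used exactly where needed (to make $\beta-n/s<0$ so the second bound decays and the optimization is meaningful). One cosmetic remark: the long exploratory preamble before the formal proof could be dropped in a final version, since the proof itself is clean and self-contained.
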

We refer the interested reader to~\cite{PNJDE} for the detailed proof of this proposition. 
\begin{remark}
\label{rem:M_beta}
$ $ 
\begin{itemize}
\item[-] It is noteworthy to mention that when $\beta=0$, $\mathbf{M}_\beta$ coincides with $\mathbf{M}$, the Hardy-Littlewood maximal function. 
\item[-] One can remark in Definition~\ref{def:M_beta}, the case when $\beta=n$ is not included. Indeed, if $\beta=n$, from~\eqref{Malpha}, one observes that
\begin{align*}
\mathbf{M}_n {f}(x) = C(n)\sup_{\rho>0}{\int_{B_\rho(x)}|f(z)|dz},
\end{align*}
and it is no longer meaningful due to the unboundedness property (i.e. $\mathbf{M}_nf(x)$ can blow up even for $f \in L^1_{\mathrm{loc}}(\mathbb{R}^n)$). 
\item[-] Our work here shows an interest in global gradient estimates for weak solutions to~\eqref{OP-var} involving $\mathbf{M}_\beta$, that might bring a new challenge. The role of $\mathbf{M}_\beta$ was first indicated in~\cite{KM2014, Min2010, Min2011} with significant properties in regularity theory. The study of regularity estimates involving fractional maximal operators allows us to: (1) control the (local) oscillation of solutions; (2) estimate the upper bounds of fractional derivatives $\partial^\beta$ of solutions even when the classical derivatives do not exist; (3) understand the connection of $\mathbf{M}_\beta$ with the usual fractional Sobolev spaces $W^{\beta,q}$; (4) clarify the bridge between the \emph{fractional derivative} (in $L^q$-sense) and the fractional integral (Riesz potential) of order $\beta$ when $\beta$ is not too large. Here, it is hoped that interested readers will pay attention to~\cite{H1995, Adam1975, DS1984, NP2011, PNJFA, PNJDE, NTMN2025, PNJGA, PN-manu, TN2025_NA} for the picture of how the fractional maximal functions govern the regularity of solutions.
\end{itemize}
\end{remark}

\begin{definition}[Young function]\label{def:young}
Let $\mathcal{H}: [0,\infty) \to [0,\infty)$ be a non-decreasing convex function satisfying $\mathcal{H}(0)=0$. We say that $\mathcal{H}$ is a Young function if
\begin{align*}
\lim_{s \nearrow \infty} \frac{\mathcal{H}(s)}{s} = \infty, \ \mbox{ and } \ \lim_{s \searrow 0^+} \frac{\mathcal{H}(s)}{s} = 0.
\end{align*}
If there exists two constants $C_1, C_2>1$ such that 
\begin{align*}
\mathcal{H}(2s) \le C_1 \mathcal{H}(s) \, \ \mbox{ and } \ \ \mathcal{H}(s) \le \frac{\mathcal{H}(C_2s)}{2C_2} , \quad \text{for all}\ \  s \ge 0,
\end{align*}
then we shall often write $\mathcal{H} \in \Delta_2 \cap \nabla_2$.
\end{definition}

\begin{definition}[Weighted Orlicz spaces]
\label{def:orlicz}
Let $\mathcal{H} \in \Delta_2 \cap \nabla_2$ be a Young function and $\mu \in \mathcal{A}_{\infty}$. We denote by $L^{\mathcal{H}}_{\mu}(\Omega)$ the Orlicz space of $f \in \mathcal{M}eas(\Omega)$ satisfying 
$$\mathrm{mod}^{\mathcal{H}}_{\mu}(f) := \int_\Omega{\mathcal{H}(|f(x)|)d\mu} < \infty,$$ 
and endowed with the Luxemburg norm
\begin{align*}
\|f\|_{L^{\mathcal{H}_{\mu}}(\Omega)} = \inf\left\{s >0 : \ \mathrm{mod}^{\mathcal{H}}_{\mu}\left(\frac{f}{s} \right) \le 1 \right\}.
\end{align*}
\end{definition}

\textit{2.2. Some preliminary lemmas.} In the remaining parts of this section, we are devoted to a few preliminary lemmas that are useful for our needs later.

\begin{lemma}\label{lem:well-known}
Let $1<q<\infty$ and $p(\cdot)$ be a continuous function satisfying the assumption $\mathbf{(H_1)}$. Then, there exists a small constant $\kappa = \kappa(q,p_{\mathrm{min}},p_{\mathrm{max}})>0$ such that if $|\mathrm{log} \mathbb{W}|_{\mathrm{BMO}} \le \kappa$ then $\omega^{p(\cdot)} \in \mathcal{A}_{p(\cdot)}$-Muckenhoupt class. Moreover, there holds
\begin{align}\label{well-known-1}
\sup_{B} \left(\fint_{\Omega \cap B}\omega^qdx\right)^{\frac{1}{q}} \left(\fint_{\Omega \cap B}\omega^{q'}dx\right)^{\frac{1}{q'}} \le C(n,q,\Lambda),
\end{align}
where $q' = \displaystyle{\frac{q}{q-1}}$, and
\begin{align}\label{well-known-2}
\sup_{B} \left(\fint_{\Omega \cap B} \left|\frac{\mathbb{W}(x) - \langle\mathbb{W}\rangle_B^{\log}}{\langle\mathbb{W}\rangle_B^{\log}}\right|^{q} dx\right)^{\frac{1}{q}} \le C(n,q,\Lambda) |\mathrm{log} \mathbb{W}|_{\mathrm{BMO}},
\end{align}
where the supremum is taken over all balls $B \subset \mathbb{R}^n$.
\end{lemma}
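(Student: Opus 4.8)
\emph{Proof strategy.} My plan is to reduce both matrix-valued estimates to scalar ones by exploiting the uniformly bounded condition number $\Lambda$, and to extract everything from the self-improving (John--Nirenberg) nature of the small-$\mathrm{BMO}$ hypothesis $\mathbf{(H_2)}$. I set $\mathbb{M}:=\mathrm{log}\,\mathbb{W}$, $\delta:=|\mathrm{log}\,\mathbb{W}|_{\mathrm{BMO}}$, and introduce the scalar function $\eta(x):=\tfrac1n\mathrm{tr}\,\mathbb{M}(x)=\tfrac1n\mathrm{log}\det\mathbb{W}(x)$. From~\eqref{Pi-2} every eigenvalue of $\mathbb{W}(x)$ lies in $[\Lambda^{-1}\omega(x),\omega(x)]$, hence $\Lambda^{-1}\omega(x)\le e^{\eta(x)}\le\omega(x)$, i.e. $\omega\asymp_\Lambda e^{\eta}$, and also $|\mathbb{M}(x)-(\mathrm{log}\,\omega(x))\,\mathrm{Id}_n|\le\mathrm{log}\,\Lambda$; since $|\tfrac1n\mathrm{tr}\,A|\le|A|$ for symmetric $A$, one also gets $|\eta|_{\mathrm{BMO}}\le\delta$. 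First I would fix $\kappa=\kappa(n,q,p_{\mathrm{min}},p_{\mathrm{max}})>0$ so small that, whenever $\delta\le\kappa$, the John--Nirenberg inequality is available for all scalar and symmetric-matrix-valued $\mathrm{BMO}$ functions occurring below with the finitely many exponents needed (bounded in terms of $q$ and $p_{\mathrm{max}}$): concretely $\fint_B e^{s|\eta-\overline{\eta}_B|}\,dx\le 2$, $\fint_B|\eta-\overline{\eta}_B|^{r}\,dx\le C(r)\delta^{r}$, and the corresponding moment and weak-type bounds for matrix-valued $\mathrm{BMO}$ maps, uniformly over all balls $B\subset\mathbb{R}^n$ (after extending $\mathbb{W}$ to $\mathbb{R}^n$ so as to preserve $\mathbf{(H_2)}$; the passage back to the sets $\Omega\cap B$ is the standard reduction to concentric balls of comparable size, using $\mathbf{(H_3)}$).

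\emph{Membership in $\mathcal{A}_{p(\cdot)}$, and proof of~\eqref{well-known-1}.} Because $\omega\asymp_\Lambda e^{\eta}$, the quantity defining $\mathcal{A}_{p(\cdot)}$ in Definition~\ref{def:Muck_VE} is comparable, up to fixed powers of $\Lambda$, to the same quantity for the weight $e^{\eta}$; the $\mathrm{log}$-H\"older continuity in $\mathbf{(H_1)}$ bounds the oscillation of $p$ on a ball of radius $\rho$ by $\alpha(\rho)\lesssim\kappa/\mathrm{log}(1/\rho)$, which keeps the factor $|B_\rho|^{-(p(x)-p_{B_\rho})}$ under control, and combined with the exponential integrability of $\eta$ just recorded (which makes $e^{c\eta}$ an $\mathcal{A}_\infty$ weight with constants depending only on $n$ and $c$) this gives $\omega^{p(\cdot)}\in\mathcal{A}_{p(\cdot)}$ by the standard computation, cf.~\cite{DHHR2017}. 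Next,~\eqref{well-known-1} is exactly the assertion that $\omega^{q}\in\mathcal{A}_{q}$ with characteristic $\le C(n,q,\Lambda)$; by the equivalence recorded in the remark following Definition~\ref{def:log_mean}, it suffices to prove $\big(\fint_B\omega^{q}\,dx\big)^{1/q}\le C\langle\omega\rangle_B^{\mathrm{log}}$ and $\big(\fint_B\omega^{-q'}\,dx\big)^{1/q'}\le C\langle 1/\omega\rangle_B^{\mathrm{log}}$ and to multiply, using $\langle\omega\rangle_B^{\mathrm{log}}\langle 1/\omega\rangle_B^{\mathrm{log}}=1$. For the first, $\mathrm{log}\,\omega\ge\eta$ gives $\langle\omega\rangle_B^{\mathrm{log}}\ge e^{\overline{\eta}_B}$, while $\fint_B\omega^{q}\,dx\le\Lambda^{q}e^{q\overline{\eta}_B}\fint_B e^{q|\eta-\overline{\eta}_B|}\,dx\le 2\Lambda^{q}e^{q\overline{\eta}_B}$ by John--Nirenberg; the second is symmetric, with $1/\omega$ in place of $\omega$.

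\emph{Proof of~\eqref{well-known-2}.} I decompose $\mathbb{M}=\eta\,\mathrm{Id}_n+\mathbb{N}$ with $\mathbb{N}$ trace-free. The condition number forces every eigenvalue of $\mathbb{N}(x)$ into $[-\mathrm{log}\,\Lambda,\mathrm{log}\,\Lambda]$, so $|\mathbb{N}(x)|\le\mathrm{log}\,\Lambda$ and $|\overline{\mathbb{N}}_B|\le\mathrm{log}\,\Lambda$ pointwise, whereas $|\mathbb{N}|_{\mathrm{BMO}}\le 2\delta$. Setting $P:=\langle\mathbb{W}\rangle_B^{\mathrm{log}}=\exp(\overline{\mathbb{M}}_B)=e^{\overline{\eta}_B}\exp(\overline{\mathbb{N}}_B)$, one computes $P^{-1/2}\mathbb{W}(x)P^{-1/2}=e^{\eta(x)-\overline{\eta}_B}\,Q(x)$, where $Q(x):=\exp(-\tfrac12\overline{\mathbb{N}}_B)\exp(\mathbb{N}(x))\exp(-\tfrac12\overline{\mathbb{N}}_B)$ satisfies $|Q(x)|,|Q(x)^{-1}|\le\Lambda^{2}$ uniformly. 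Since $\big|\tfrac{\mathbb{W}(x)-\langle\mathbb{W}\rangle_B^{\mathrm{log}}}{\langle\mathbb{W}\rangle_B^{\mathrm{log}}}\big|\le C(\Lambda)\big|P^{-1/2}\mathbb{W}(x)P^{-1/2}-\mathrm{Id}_n\big|\le C(\Lambda)\big(|e^{\eta(x)-\overline{\eta}_B}-1|\,|Q(x)|+|Q(x)-\mathrm{Id}_n|\big)$, I bound the two $q$-th means separately. For the first, $|e^{t}-1|\le|t|e^{|t|}$, Cauchy--Schwarz and the John--Nirenberg moment/exponential bounds for $\eta$ give $\fint_B|e^{\eta-\overline{\eta}_B}-1|^{q}\,dx\lesssim_{q}\delta^{q}$, and $|Q|\le\Lambda^{2}$. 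For the second I split $B=G\cup(B\setminus G)$ with $G:=\{x\in B:|\mathbb{N}(x)-\overline{\mathbb{N}}_B|\le 1\}$: on $G$, expanding $\exp$ around $\overline{\mathbb{N}}_B$ by means of $\|D\exp(A)[H]\|\le e^{\|A\|}\|H\|$ with $\|A\|\le\mathrm{log}\,\Lambda+1$ yields $|Q(x)-\mathrm{Id}_n|\le e\Lambda^{2}|\mathbb{N}(x)-\overline{\mathbb{N}}_B|$, hence $\fint_G|Q-\mathrm{Id}_n|^{q}\,dx\lesssim_{q,\Lambda}\fint_B|\mathbb{N}-\overline{\mathbb{N}}_B|^{q}\,dx\lesssim_{q}\delta^{q}$ by the matrix John--Nirenberg; on $B\setminus G$, $|Q-\mathrm{Id}_n|\le\Lambda^{2}+1$ and $|B\setminus G|/|B|\le C_1 e^{-c_2/\delta}\le\delta^{q}$, the last step valid once $\kappa$ — hence $\delta$ — is small in terms of $n,q$ (via $\delta\,\mathrm{log}(1/\delta)\le c_2/(2q)$). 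Summing the two contributions and taking $q$-th roots gives $\big(\fint_B\big|\tfrac{\mathbb{W}-\langle\mathbb{W}\rangle_B^{\mathrm{log}}}{\langle\mathbb{W}\rangle_B^{\mathrm{log}}}\big|^{q}\,dx\big)^{1/q}\le C(n,q,\Lambda)\,\delta$, which is~\eqref{well-known-2}. I expect the one genuinely delicate point to be exactly this last computation: decoupling the honestly $\mathrm{BMO}$-small scalar oscillation $\eta-\overline{\eta}_B$ from the merely $\Lambda$-bounded, non-commuting ``rotational'' factor $Q$ — a naive Taylor expansion of $\exp(\mathbb{M})$ fails, precisely because $\mathbb{M}$ is not small relative to its own size — and then absorbing the exponential growth of $\exp$ over the John--Nirenberg bad set; once the scalar reduction is set up, everything else reduces to routine harmonic analysis.
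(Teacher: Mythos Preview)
The paper does not actually prove this lemma: it is stated in Section~2.2 as a preliminary ``well-known'' fact, with no argument given, and is implicitly imported from~\cite{BDGN2022, BBDL2023, BY24}. So there is no in-paper proof to compare against; what you have written is a reconstruction of the argument behind those references.

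Your reconstruction is along the right lines and matches the standard route in the cited literature: pass from the matrix weight to a scalar proxy via the bounded condition number~\eqref{Pi-2} (your $\eta=\tfrac1n\mathrm{tr}\,\mathrm{log}\,\mathbb{W}$, giving $\omega\asymp_\Lambda e^{\eta}$ and $|\eta|_{\mathrm{BMO}}\le\delta$), then invoke the John--Nirenberg self-improvement to obtain both the $\mathcal{A}_q$-membership~\eqref{well-known-1} and the higher-moment oscillation bound~\eqref{well-known-2}. Your decomposition $\mathbb{M}=\eta\,\mathrm{Id}_n+\mathbb{N}$ with $|\mathbb{N}|\le\mathrm{log}\,\Lambda$, and the subsequent splitting into the scalar oscillation $e^{\eta-\overline\eta_B}-1$ and the bounded conjugated factor $Q$, is exactly the mechanism used in~\cite{BDGN2022}; the good-set/bad-set argument for $Q-\mathrm{Id}_n$ is a correct way to control the non-commutative remainder. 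Two minor remarks: (i) as written in the paper,~\eqref{well-known-1} has $\omega^{q'}$ rather than $\omega^{-q'}$ in the second factor, which is evidently a misprint (the literal product is unbounded already for constant $\omega$); you have silently and correctly proved the intended $\mathcal{A}_q$-type bound, which is also what is actually used downstream at~\eqref{w-w0-bound}. (ii) The extension of $\mathbb{W}$ to $\mathbb{R}^n$ and the passage between $B$ and $\Omega\cap B$ are brushed aside; this is routine but deserves a line, since the BMO seminorm in Definition~\ref{def:log-BMO} is taken over balls, and the application at~\eqref{w-w0-bound} is over $\Omega_{R/2}$.
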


The following covering argument, which has been developed in~\cite{CC1995, AM2007}, will play an essential role later in our proofs. 

\begin{lemma}\label{lem:Vitali}
Consider $\varepsilon \in (0,1)$ and a $(\kappa, r_0)$-Reifenberg domain $\Omega$ for some $\kappa, r_0>0$. Let $V_1$ and $V_2$ be two measurable subsets in $\Omega$. Assume that $|V_1| \le \varepsilon R_0^n$ for some $0<R_0\le r_0$. Suppose further that the following statement holds
\begin{align*}
\Omega \cap B_{\rho}(x_0) \not\subset V_2 \Longrightarrow |B_{\rho}(x_0)\cap V_1| < \varepsilon |B_{\rho}(x_0)|, \ \mbox{ for all } x_0 \in \Omega \mbox{ and } 0<\rho \le R_0.
\end{align*}
Then, there exists $C>0$ such that $|V_1| \leq C\varepsilon |V_2|$.
\end{lemma}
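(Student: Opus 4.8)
\textbf{Proof proposal for Lemma~\ref{lem:Vitali}.}

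The plan is to run a Vitali-type covering argument adapted to the Reifenberg-flat geometry, in the style of~\cite{CC1995, AM2007}. First I would fix the hypotheses: $|V_1| \le \varepsilon R_0^n$ and the implication that whenever a ball $B_\rho(x_0)$ with $0<\rho\le R_0$ is not entirely swallowed by $V_2$ (relative to $\Omega$), the density of $V_1$ inside it is strictly below $\varepsilon$. The key contrapositive observation is: for each $x_0 \in V_1$ (up to a null set), by Lebesgue differentiation $\lim_{\rho\to 0^+}\frac{|B_\rho(x_0)\cap V_1|}{|B_\rho(x_0)|}=1>\varepsilon$, so there is a \emph{largest} radius $\rho_{x_0}\in(0,R_0]$ for which $|B_{\rho_{x_0}}(x_0)\cap V_1| = \varepsilon|B_{\rho_{x_0}}(x_0)|$ — and either $\rho_{x_0}<R_0$ and for all larger radii up to $R_0$ the density drops below $\varepsilon$ (forcing, by the hypothesis, that those larger balls are \emph{not} contained in $\Omega\setminus$ ... careful: the hypothesis says ``not contained in $V_2$ implies low density,'' so low density at radius $\rho$ is consistent, but the stopping radius itself has density exactly $\varepsilon$, hence $\Omega\cap B_{\rho_{x_0}}(x_0)\subset V_2$), or $\rho_{x_0}=R_0$, in which case the global bound $|V_1|\le\varepsilon R_0^n \le \varepsilon|B_{R_0}(x_0)|\cdot C$ already controls things. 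The upshot is that the family $\{B_{\rho_{x_0}}(x_0)\}_{x_0\in V_1}$ covers $V_1$ (a.e.), each ball has density exactly $\varepsilon$ of $V_1$, and each ball satisfies $\Omega\cap B_{\rho_{x_0}}(x_0)\subset V_2$.

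Next I would apply the Vitali covering lemma to extract a countable disjoint subfamily $\{B_{\rho_i}(x_i)\}_{i}$ such that $V_1 \subset \bigcup_i B_{5\rho_i}(x_i)$ up to a null set. Then
\[
|V_1| \le \sum_i |B_{5\rho_i}(x_i)| = 5^n \sum_i |B_{\rho_i}(x_i)| = \frac{5^n}{\varepsilon}\sum_i |B_{\rho_i}(x_i)\cap V_1| \cdot \varepsilon \Big/ \varepsilon,
\]
so in cleaner form $|V_1| \le 5^n\varepsilon^{-1}\sum_i|B_{\rho_i}(x_i)\cap V_1|$ — wait, the density identity gives $|B_{\rho_i}(x_i)\cap V_1|=\varepsilon|B_{\rho_i}(x_i)|$, hence $\sum_i|B_{\rho_i}(x_i)| = \varepsilon^{-1}\sum_i|B_{\rho_i}(x_i)\cap V_1| \le \varepsilon^{-1}|V_1|$ by disjointness. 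That by itself only recovers $|V_1|\le 5^n|V_1|$, which is useless; the real point is to estimate $\sum_i|B_{\rho_i}(x_i)|$ against $|V_2|$. Since $\Omega\cap B_{\rho_i}(x_i)\subset V_2$ for each $i$, and the Reifenberg flatness guarantees a lower density bound $|\Omega\cap B_{\rho_i}(x_i)| \ge c(n)|B_{\rho_i}(x_i)|$ uniformly for $\rho_i\le r_0$ (this is where $\kappa<\tfrac18$ enters — flat domains are locally ``at least half a ball''), we get $|B_{\rho_i}(x_i)| \le c(n)^{-1}|\Omega\cap B_{\rho_i}(x_i)| \le c(n)^{-1}|B_{\rho_i}(x_i)\cap V_2|$. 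Summing over the disjoint family, $\sum_i|B_{\rho_i}(x_i)| \le c(n)^{-1}|V_2|$. Combining, $|V_1| \le 5^n\sum_i|B_{\rho_i}(x_i)| = 5^n\varepsilon^{-1}\sum_i\varepsilon|B_{\rho_i}(x_i)| \le 5^n\varepsilon\cdot c(n)^{-1}\varepsilon^{-1}\cdot$ ... let me restate: from $|B_{\rho_i}(x_i)\cap V_1|=\varepsilon|B_{\rho_i}(x_i)|$ we do \emph{not} want to cancel; instead $|V_1|\le 5^n\sum_i|B_{\rho_i}(x_i)|$, and separately $\sum_i|B_{\rho_i}(x_i)| \le \varepsilon^{-1}\sum_i|V_1\cap B_{\rho_i}(x_i)|$ is the wrong direction. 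The correct chain: each stopping ball has $V_1$-density $\varepsilon$, so it is ``mostly not $V_1$''; but it \emph{is} inside $V_2$; disjointness of the extracted balls plus the flatness lower bound yields $\sum_i |B_{\rho_i}(x_i)| \le c(n)^{-1}\sum_i|B_{\rho_i}(x_i)\cap V_2| \le c(n)^{-1}|V_2|$, while the density stopping gives $|V_1| \le \sum_i|B_{5\rho_i}(x_i)| = 5^n\sum_i|B_{\rho_i}(x_i)|$; hence $|V_1| \le 5^n c(n)^{-1}|V_2|$. The factor $\varepsilon$ then appears because one actually wants the sharper bookkeeping $|V_1\cap B_{5\rho_i}(x_i)| \le |B_{5\rho_i}(x_i)| = 5^n|B_{\rho_i}(x_i)| \le 5^n\varepsilon^{-1}|V_1\cap B_{\rho_i}(x_i)|$ — no. The $\varepsilon$ on the right of $|V_1|\le C\varepsilon|V_2|$ comes from choosing, at the stopping time, the radius where density equals $\varepsilon$ and then noting $|V_1\cap B_{5\rho_i}| \le$ something $\times \varepsilon|B_{\rho_i}|$ via the maximal-function-type control; I will follow~\cite[Lemma]{AM2007} verbatim for this bookkeeping.

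The main obstacle, and the step requiring care, is precisely this final bookkeeping: getting the factor $\varepsilon$ (not just a constant) on the right-hand side. This is achieved by applying the stopping-time / Calderón–Zygmund decomposition more carefully — one shows that $V_1$ is covered by dilated stopping balls on each of which the $V_1$-density is \emph{comparable to} $\varepsilon$ (by maximality of the stopping radius, the density at $5\rho_i$ is below $\varepsilon$, and a doubling/continuity argument bounds the density at $\rho_i$ by $C\varepsilon$ as well), so $|V_1\cap B_{5\rho_i}(x_i)| \le C\varepsilon|B_{\rho_i}(x_i)| \le C\varepsilon c(n)^{-1}|B_{\rho_i}(x_i)\cap V_2|$, and summing over the disjoint subfamily gives $|V_1|\le C\varepsilon|V_2|$. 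The two quantitative inputs I rely on are: (i) the Vitali $5r$-covering lemma, and (ii) the uniform lower Lebesgue-density bound $|\Omega\cap B_\rho(x)|\ge c(n)|B_\rho(x)|$ for $x\in\Omega$, $0<\rho\le r_0$, valid for $(\kappa,r_0)$-Reifenberg domains with $\kappa<\tfrac18$, which lets me convert ``ball $\subset V_2$ in $\Omega$'' into ``a definite fraction of the ball's volume lies in $V_2$.'' The hypothesis $|V_1|\le\varepsilon R_0^n$ is what rules out the degenerate case where the stopping radius would want to exceed $R_0$: it guarantees $\rho_{x_0}<R_0$ for a.e. $x_0\in V_1$, since at radius $R_0$ the density $|V_1\cap B_{R_0}(x_0)|/|B_{R_0}(x_0)| \le \varepsilon R_0^n/(c_n R_0^n) < \varepsilon$ for suitably normalized constants, making $R_0$ an admissible (non-stopping) radius.
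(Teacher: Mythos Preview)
The paper does not supply its own proof of Lemma~\ref{lem:Vitali}; it simply states the result and defers to~\cite{CC1995, AM2007}. Your proposal reconstructs exactly the standard Calder\'on--Zygmund/Vitali stopping-time argument found in those references, and the core structure is correct: Lebesgue differentiation supplies the stopping radii, the contrapositive of the hypothesis forces each stopping ball into $V_2$ (relative to $\Omega$), Vitali extracts a disjoint subfamily, Reifenberg flatness provides the lower density bound $|\Omega\cap B_\rho(x)|\ge c(n)|B_\rho(x)|$, and the maximality of the stopping radius yields $|V_1\cap B_{5\rho_i}(x_i)|\le C\varepsilon|B_{\rho_i}(x_i)|$, which after summing over the disjoint family gives $|V_1|\le C\varepsilon|V_2|$.

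Your writeup does wander before locking onto the correct bookkeeping---the several false starts on where the factor $\varepsilon$ enters could be excised---but your final paragraph has the right mechanism. Two minor points to tighten: (i) the bound $|V_1\cap B_{5\rho_i}(x_i)|<\varepsilon|B_{5\rho_i}(x_i)|$ from maximality of the stopping radius requires $5\rho_i\le R_0$; when $5\rho_i>R_0$ one instead uses $|V_1\cap B_{5\rho_i}(x_i)|\le|V_1|\le\varepsilon R_0^n\le 5^n\varepsilon\rho_i^n = C(n)\varepsilon|B_{\rho_i}(x_i)|$, which you allude to at the end but do not quite close. (ii) Your claim that $\varepsilon R_0^n/(c_n R_0^n)<\varepsilon$ tacitly assumes $|B_1|\ge1$, which fails for large $n$; the fix is simply to absorb $|B_1|^{-1}$ into the final constant $C$.
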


%
%
%

\section{Comparison maps}
\label{sec:comparison}

In this section, we establish the comparison strategy, which is divided into several steps towards the proofs of the main results. It is worth mentioning that one of the major challenges in the comparison procedure is due, on the one hand, to the structure of problem~\eqref{OP-var} that involves a singular/degenerate matrix weight $\mathbb{W}$ (the behavior of solutions is highly dependent on the properties of weights), on the other hand, to the variable exponent growths (variable function spaces in the weighted settings). 

More precisely, the process is divided into four steps associated with different comparison maps, and for the reader's convenience, we shall first sketch the main ideas and key points underlying the approach. We start from the comparison between the weak solution $u$ to the original problem~\eqref{OP-var} and the solution $v$ to a corresponding homogeneous equation, which will be stated in Lemma~\ref{lem-step1}. Subsequently, in the second step, it allows us to establish a local comparison estimate between the solution $v$ (to the homogeneous problem in the first step) and the solution $\tilde{v}$ to the new problem (obtained by fixing the variable exponent as $p_2$- the supremum of $p(\cdot)$ on a small ball). Following this idea, as a consequence, it yields that $\tilde{v}$ has higher integrability than $v$ on the ball with a smaller radius, via the assertion of Lemma~\ref{lem-step2}. In the next step, the goal is to further shrink the ball to one with a smaller radius and construct the new candidate $\pi$, solutions to the homogeneous equation whose matrix-weight is fixed by its logarithmic mean. Final preparation for the series of comparisons is devoted to the presence of $\tilde{\pi}$, which is bounded in $L^\infty$, the comparison between $\nabla\pi$ and $\nabla \tilde{\pi}$ will be established in Lemma~\ref{lem-step4}, even for the near-boundary balls. 

As a preliminary step, an initial local comparison estimate between the main solution to~\eqref{OP-var} and the solution to the associated homogeneous problem, via the following lemma. 
\begin{lemma}[The 1$^\text{st}$ step]
\label{lem-step1}
Let $u \in \mathbb{K}^\omega_{\phi_1,\phi_2}$ be a weak solution to the main problem~\eqref{OP-var} under assumptions \eqref{Pi-2},  \eqref{cond:pq1} and \eqref{def-phi-12}. There exists a constant $\kappa \in (0,1)$ such that if $\mathbf{(H_1)}$ satisfies for some $r_0>0$ then the following holds. Let $x_0$ be a fixed point in $\overline{\Omega}$, a given $R>0$, $p_2 = \sup\limits_{x \in \Omega_{R}} p(x)$ with $\Omega_R := \Omega_R(x_0)$. Then, it is possible to find a function $v \in W^{1,p(\cdot)}\big(\Omega_{2R},\omega^{p(\cdot)}\big) \cap W^{1,p_2}\left(\Omega_{R},\omega^{p_2}\right)$ such that
\begin{align}\label{est-step1a}
\fint_{\Omega_{2R}} \omega^{p(x)}  |\nabla u - \nabla v|^{p(x)} & dx \le \delta \fint_{\Omega_{2R}} \omega^{p(x)}|\nabla u|^{p(x)} dx \notag \\
& + C_{\delta} \fint_{\Omega_{2R}} \omega^{p(x)} \big(|\mathbf{F}|^{p(x)} + |\nabla \phi_1|^{p(x)} + |\nabla \phi_2|^{p(x)}\big) dx,
\end{align}
for every $\delta \in (0,1)$. Specifically, one obtains the following global estimate
\begin{align}\label{glob-est}
\int_{\Omega} \omega^{p(x)}|\nabla u|^{p(x)} dx \le C \int_{\Omega} \omega^{p(x)} \big(|\mathbf{F}|^{p(x)} + |\nabla \phi_1|^{p(x)} + |\nabla \phi_2|^{p(x)}\big) dx.
\end{align}
\end{lemma}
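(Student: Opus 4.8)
The plan is to prove Lemma~\ref{lem-step1} by a standard energy-comparison argument adapted to the two-obstacle, variable-exponent, matrix-weighted setting. First I would define $v \in W^{1,p(\cdot)}\big(\Omega_{2R},\omega^{p(\cdot)}\big)$ as the (unique) weak solution to the homogeneous Dirichlet problem
\begin{align}\notag
\int_{\Omega_{2R}} |\mathbb{W}\nabla v|^{p(x)-2}\mathbb{W}^2\nabla v\cdot\nabla\varphi\,dx = 0\quad\text{for all }\varphi\in W^{1,p(\cdot)}_0\big(\Omega_{2R},\omega^{p(\cdot)}\big),
\end{align}
with $v - u \in W^{1,p(\cdot)}_0\big(\Omega_{2R},\omega^{p(\cdot)}\big)$; existence and uniqueness follow from monotone operator theory under $\omega^{p(\cdot)}\in\mathcal{A}_{p(\cdot)}$, which holds by Lemma~\ref{lem:well-known} once $\kappa$ is chosen small. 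The higher integrability $v\in W^{1,p_2}(\Omega_R,\omega^{p_2})$ is a Gehring-type self-improving estimate for homogeneous degenerate $p(\cdot)$-equations (a reverse Hölder inequality after freezing considerations), and I would invoke it with the $\log$-Hölder control $(\mathbf{H_1})$ making the jump from $p(\cdot)$ to the constant $p_2$ on $\Omega_R$ harmless.

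The core of the argument is the energy estimate \eqref{est-step1a}. I would test the variational inequality \eqref{OP-var} for $u$ against an admissible $\phi\in\mathbb{K}^\omega_{\phi_1,\phi_2}$ and the equation for $v$ against $u-v$, then subtract. The natural test function is $\phi = u - (u-v)\eta$ for a suitable cutoff, but the obstacle constraint $\phi_1\le\phi\le\phi_2$ forces a truncation: one must correct $u-v$ so that $\phi$ stays between the obstacles, which is exactly where $\nabla\phi_1,\nabla\phi_2$ enter the right side of \eqref{est-step1a}. Concretely I would split $u-v = \big((u-v)\vee(\phi_1-v)\big)\wedge(\phi_2-v) + (\text{error})$, where the error is supported on $\{u<\phi_1\}\cup\{u>\phi_2\}$ — but since $u\in\mathbb{K}^\omega_{\phi_1,\phi_2}$ these sets are null, so the relevant correction instead controls the contribution of $v$ straying outside $[\phi_1,\phi_2]$; on that bad set $|\nabla v|$ is comparable to $|\nabla\phi_i|$ plus lower order, yielding the obstacle terms. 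After this bookkeeping, monotonicity of the vector field $\xi\mapsto|\mathbb{W}\xi|^{p(x)-2}\mathbb{W}^2\xi$ gives, via the standard $p$-monotonicity inequalities split into the degenerate range $p(x)\ge 2$ and singular range $1<p(x)<2$,
\begin{align}\notag
\fint_{\Omega_{2R}}\omega^{p(x)}|\nabla u-\nabla v|^{p(x)}dx \le C\fint_{\Omega_{2R}}\big(I\big)^{p(x)-2}_{\!}|\nabla u - \nabla v|^2\,(\cdots)\,dx + (\text{data}),
\end{align}
which after a Young-inequality absorption with parameter $\delta$ produces the stated estimate. The $\kappa$-dependence is traced through the $\mathcal{A}_{p(\cdot)}$ constants and the reverse-Hölder exponents, all controlled by \eqref{well-known-1}.

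For the global bound \eqref{glob-est}, I would take $\phi=\phi_1$ (or $\phi_2$) directly as a test function in \eqref{OP-var}: since $\phi_1-u\in W^{1,p(\cdot)}_0$ and $\phi_1\in\mathbb{K}^\omega_{\phi_1,\phi_2}$ when $\phi_1 = \phi_2$ fails, one uses instead $\phi = u + (\phi_1-u)^+$... more cleanly, test with $\phi_1$ against $u-\phi_1$ and with $\phi_2$ against $u-\phi_2$ appropriately, add, and use that $u-\phi_1\ge 0\ge u-\phi_2$ to get coercivity; then Young's inequality in the weighted variable-exponent modular, together with a weighted Sobolev–Poincaré inequality valid for $\mathcal{A}_{p(\cdot)}$-weights on the Reifenberg domain $\Omega$, absorbs the gradient of $u$ on the left. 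The normalization "$1+$" does not appear here because the estimate is homogeneous; in the main theorems the $1$ enters through the variable-exponent scaling.

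\textbf{Main obstacle.} The delicate point is not the monotonicity algebra but the interaction between the obstacle constraint and the variable exponent: constructing an admissible test function $\phi\in\mathbb{K}^\omega_{\phi_1,\phi_2}$ close to $u$ while keeping the error terms controlled by $|\nabla\phi_1|^{p(x)}+|\nabla\phi_2|^{p(x)}$ in the correct weighted modular, and ensuring that truncations at the $x$-dependent exponent level do not lose the reverse-Hölder self-improvement. I expect the two-sided nature (both $\phi_1$ and $\phi_2$ active, possibly touching) to be the source of the extra care compared to the one-obstacle case in~\cite{BBDL2023}, and the $\log$-Hölder continuity of $p(\cdot)$ to be essential precisely at the step where the exponent is frozen to $p_2$ on the smaller ball.
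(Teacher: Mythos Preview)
Your definition of $v$ matches the paper's (the homogeneous solution with $v-u\in W_0^{1,p(\cdot)}(\Omega_{2R})$), and your handling of the higher integrability $v\in W^{1,p_2}(\Omega_R,\omega^{p_2})$ via reverse H\"older plus the $\log$-H\"older gap is exactly right. The gap is in the comparison estimate \eqref{est-step1a}. Your plan is to test \eqref{OP-var} with a truncated $\phi=(v\vee\phi_1)\wedge\phi_2$ (extended by $u$ outside $\Omega_{2R}$) and then assert that on the bad set $\{v\notin[\phi_1,\phi_2]\}$ one has ``$|\nabla v|$ comparable to $|\nabla\phi_i|$ plus lower order''. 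That claim is unjustified: $v$ solves a homogeneous equation with boundary data $u$ and has no PDE relationship to $\phi_i$ in the interior, so there is no mechanism forcing $\nabla v\approx\nabla\phi_i$ on $\{v<\phi_1\}$ or $\{v>\phi_2\}$. If instead you bound the bad-set contribution by the full energy $\int\omega^{p(x)}|\nabla v|^{p(x)}\le C\int\omega^{p(x)}|\nabla u|^{p(x)}$, you lose the $\delta$-smallness and \eqref{est-step1a} does not close.

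The paper avoids this by \emph{not} comparing $u$ and $v$ directly. It inserts two intermediate functions: $u_1$ solves a one-sided obstacle problem (constraint $\ge\phi_1$) with right-hand side $|\mathbb{W}\nabla\phi_2|^{p(x)-2}\mathbb{W}^2\nabla\phi_2$, and $u_2$ solves an equation with right-hand side $|\mathbb{W}\nabla\phi_1|^{p(x)-2}\mathbb{W}^2\nabla\phi_1$ and boundary data $u_1$. The point of choosing the obstacle gradients as data is a comparison-principle trick: testing \eqref{eq:DOP-1} with $\min\{u_1,\phi_2\}$ and using monotonicity forces $u_1\le\phi_2$ a.e., so $u_1$ (extended by $u$) is admissible in \eqref{OP-var}; similarly $u_2\ge\phi_1$. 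Each link $u\to u_1\to u_2\to v$ then yields a clean energy comparison of the type \eqref{Comp-est-5}--\eqref{Comp-est-7} with the right $\delta$-structure, and these are summed. This device, rather than a truncation of $v$, is the missing idea.

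For \eqref{glob-est}, your direct-testing sketch is also shaky: $\phi_1,\phi_2$ need not lie in $W_0^{1,p(\cdot)}(\Omega,\omega^{p(\cdot)})$, so they are not admissible in \eqref{OP-var}. The paper simply takes $R$ large enough that $\Omega\subset B_{2R}(x_0)$; then $v\in W_0^{1,p(\cdot)}(\Omega,\omega^{p(\cdot)})$ solves the homogeneous equation, hence $v\equiv 0$, and \eqref{est-step1a} with $\delta$ small immediately gives \eqref{glob-est}.
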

\begin{proof}
The proof is divided into several steps. We start here considering the model problem with a one-sided obstacle. Let us define
$$u_1 \in \mathrm{K}_{\phi_1} := \left\{\mathrm{w} \in u + W_0^{1,p(\cdot)}\big(\Omega_{2R},\omega^{p(\cdot)}\big): \ \mathrm{w} \ge \phi_1 \ \mbox{ a.e. in } \Omega_{2R}\right\},$$  
the unique solution to the following variational inequality
\begin{align}\label{eq:DOP-1}
\int_{\Omega_{2R}}  |\mathbb{W}(x)\nabla u_1|^{p(x)-2} & \mathbb{W}(x)\nabla u_1 \cdot \mathbb{W}(x)\nabla (u_1 -  \phi)  dx \notag \\
& \le \int_{\Omega_{2R}} |\mathbb{W}(x)\nabla \phi_2|^{p(x)-2}\mathbb{W}(x)\nabla \phi_2 \cdot \mathbb{W}(x)\nabla (u_1 -  \phi) dx,
\end{align}
for any $\phi \in \mathrm{K}_{\phi_1}$. Testing the formulation~\eqref{eq:DOP-1} with the function $u$, one gets that
\begin{align}\label{Comp-est-1}
\int_{\Omega_{2R}}   |\mathbb{W}(x)\nabla u_1|^{p(x)-2} & \mathbb{W}(x)\nabla u_1 \cdot \mathbb{W}(x)\nabla (u_1 -  u)  dx \notag \\
& \le \int_{\Omega_{2R}} |\mathbb{W}(x)\nabla \phi_2|^{p(x)-2}\mathbb{W}(x)\nabla \phi_2 \cdot \mathbb{W}(x)\nabla (u_1 -  u) dx.
\end{align}
Invoking~\eqref{Pi-2} and~\eqref{cond:pq1}, the estimate~\eqref{Comp-est-1} easily yields
\begin{align}\notag
 \fint_{\Omega_{2R}} & \omega^{p(x)}|\nabla u_1|^{p(x)} dx   \le C\fint_{\Omega_{2R}} \omega^{p(x)}|\nabla u_1|^{p(x)-1} |\nabla u| dx \\
& \qquad + C \fint_{\Omega_{2R}}  \omega^{p(x)}|\nabla \phi_2|^{p(x)-1} |\nabla u_1| dx + C \fint_{\Omega_{2R}}  \omega^{p(x)}|\nabla \phi_2|^{p(x)-1} |\nabla u| dx. \notag
\end{align}
Apply Young's inequality on the right-hand side of this inequality, we arrive at
\begin{align}\label{Comp-est-2}
& \fint_{\Omega_{2R}} \omega^{p(x)}|\nabla u_1|^{p(x)} dx   \le C\fint_{\Omega_{2R}} \omega^{p(x)}|\nabla u|^{p(x)} dx + C\fint_{\Omega_{2R}} \omega^{p(x)}|\nabla \phi_2|^{p(x)} dx.
\end{align}
On the other hand, choosing the test function for the test function $\phi$ in~\eqref{eq:DOP-1} 
$$u_1 - (u_1-\phi_2)^+ = \min\{u_1,\phi_2\} \in \mathrm{K}_{\phi_1},$$ 
we then obtain the inequality
\begin{align}\label{Comp-est-2-k}
\int_{\Omega_{2R} \cap \{u_1 \ge \phi_2\}} & \mathcal{J}_{p(x)}(\mathbb{W}(x)\nabla u_1,\mathbb{W}(x)\nabla \phi_2) dx \le 0,
\end{align} 
where $\mathcal{J}_{q}: \mathbb{R}^n \times \mathbb{R}^n \to [0,\infty)$ is the map defined by
\begin{align}\label{def-Jpx}
\mathcal{J}_{q}(y,z) = \left(|y|^{q-2}y - |z|^{q-2}z\right) \cdot (y-z), \quad \text{for} \ y,z \in \mathbb{R}^n.
\end{align}
For every $x \in \Omega$, regarding $\mathcal{J}_{p(x)}$, we further have the following classical inequality
\begin{align}\label{classi-ineq}
|y-z|^{p(x)} \le \epsilon |y|^{p(x)} + C_{\epsilon} \mathcal{J}_{p(x)}(y,z), \quad x \in \Omega,\quad y, z \in \mathbb{R}^n,
\end{align}
for all $\epsilon \in (0,1)$. Here, $C_{\epsilon}>0$ is a positive constant depending on $\epsilon$,  $p_{\mathrm{min}}$ and $p_{\mathrm{max}}$. It is further straightforward to verify~\eqref{classi-ineq}, that is somewhat similar to the one from~\cite[Lemma 3.2]{PN-manu} dealing with double-phase operators. As a consequence, one gets
\begin{align*}
\int_{\Omega_{2R} \cap \{u_1 \ge \phi_2\}} & \omega^{p(x)}|\nabla u_1 - \nabla \phi_2|^{p(x)} dx    \le \epsilon  \int_{\Omega_{2R} \cap \{u_1 \ge \phi_2\}} \omega^{p(x)}|\nabla u_1|^{p(x)} dx \\
& + C_{\epsilon} \int_{\Omega_{2R} \cap \{u_1 \ge \phi_2\}} \mathcal{J}_{p(x)}(\mathbb{W}(x)\nabla u_1,\mathbb{W}(x)\nabla \phi_2) dx,
\end{align*}
and combining with~\eqref{Comp-est-2-k}, it yields
\begin{align}\label{vphi-2}
\int_{\Omega_{2R} \cap \{u_1 \ge \phi_2\}} & \omega^{p(x)}|\nabla u_1 - \nabla \phi_2|^{p(x)} dx    \le \epsilon  \int_{\Omega_{2R} \cap \{u_1 \ge \phi_2\}} \omega^{p(x)}|\nabla u_1|^{p(x)} dx.
\end{align}
It is enough to substitute~\eqref{Comp-est-2} into~\eqref{vphi-2}, we arrive at
\begin{align*}
\int_{\Omega_{2R} \cap \{u_1 \ge \phi_2\}} \omega^{p(x)}|\nabla u_1 - \nabla \phi_2|^{p(x)} dx    \le \epsilon & \int_{\Omega} \omega^{p(x)} \Big(|\nabla u|^{p(x)} + |\nabla \phi_2|^{p(x)} \Big) dx,
\end{align*}
for $\epsilon>0$ small enough. As $\epsilon \to 0^+$, we infer that $u_1 \le \phi_2$ almost everywhere in $\Omega_{2R}$. Furthermore, we extend $u_1$ by $u$ outside of $\Omega_{2R}$, it therefore establishes $u_1$ as an admissible test function for~\eqref{OP-var}, from which we derive
\begin{align}\label{Comp-est-3}
\int_{\Omega_{2R}} & |\mathbb{W}(x)\nabla u|^{p(x)-2}  \mathbb{W}(x)\nabla u \cdot \mathbb{W}(x)\nabla (u -  u_1) dx \notag \\
& \qquad \le \int_{\Omega_{2R}} |\mathbb{W}(x) \mathbf{F}|^{p(x)-2}  \mathbb{W}(x) \mathbf{F} \cdot \mathbb{W}(x)\nabla (u -  u_1) dx.
\end{align}
By adding up~\eqref{Comp-est-1} and~\eqref{Comp-est-3}, it gets
\begin{align}\notag
\int_{\Omega_{2R}} & \mathcal{J}_{p(x)}(\mathbb{W}(x)\nabla u,\mathbb{W}(x)\nabla u_1) dx \le \int_{\Omega_{2R}} |\mathbb{W}(x) \mathbf{F}|^{p(x)-2}  \mathbb{W}(x) \mathbf{F} \cdot \mathbb{W}(x)\nabla (u -  u_1) dx \\
& \qquad \qquad + \int_{\Omega_{2R}} |\mathbb{W}(x)\nabla \phi_2|^{p(x)-2}\mathbb{W}(x)\nabla \phi_2 \cdot \mathbb{W}(x)\nabla (u_1 -  u) dx. \notag
\end{align}
At this stage, it observes that from~\eqref{Pi-2} and~\eqref{cond:pq1}, we infer
\begin{align}\notag
\fint_{\Omega_{2R}} \mathcal{J}_{p(x)}(\mathbb{W}(x)\nabla u,\mathbb{W}(x)\nabla u_1) dx   & \le C\fint_{\Omega_{2R}} \omega^{p(x)}|\mathbf{F}|^{p(x)-1} |\nabla u-\nabla u_1| dx \\
& \qquad + C\fint_{\Omega_{2R}} \omega^{p(x)}|\nabla \phi_2|^{p(x)-1} |\nabla u-\nabla u_1| dx, \notag
\end{align}
and making use of Young's inequality, we claim that
\begin{align}\notag
\fint_{\Omega_{2R}} \mathcal{J}_{p(x)}(\mathbb{W}(x)\nabla u,\mathbb{W}(x)\nabla u_1) dx   & \le \epsilon \fint_{\Omega_{2R}} \omega^{p(x)}|\nabla u-\nabla u_1|^{p(x)} dx \\
& \quad + C_{\epsilon} \fint_{\Omega_{2R}} \omega^{p(x)}|\mathbf{F}|^{p(x)} +  \omega^{p(x)}|\nabla \phi_2|^{p(x)} dx, \notag
\end{align}
for all $\epsilon \in (0,1)$. This together with~\eqref{classi-ineq} applied for any $\delta \in (0,1)$, it yields
\begin{align}\label{Comp-est-4}
 \fint_{\Omega_{2R}} \omega^{p(x)}|\nabla u - \nabla u_1|^{p(x)} dx   & \le \frac{\delta}{2} \fint_{\Omega_{2R}} \omega^{p(x)}|\nabla u|^{p(x)} dx \notag \\
 & \quad + \tilde{C}_{\delta} \epsilon \fint_{\Omega_{2R}} \omega^{p(x)}|\nabla u-\nabla u_1|^{p(x)} dx \notag \\
& \qquad + \tilde{C}_{\delta} C_{\epsilon} \fint_{\Omega_{2R}} \omega^{p(x)}|\mathbf{F}|^{p(x)} +  \omega^{p(x)}|\nabla \phi_2|^{p(x)} dx. 
\end{align}
As a result, it allows us to choose a suitable $\epsilon>0$ such that $\tilde{C}_{\delta} \epsilon \le \frac{1}{2}$ in~\eqref{Comp-est-4} to conclude the first step in this stage:
\begin{align}\label{Comp-est-5}
\fint_{\Omega_{2R}} \omega^{p(x)}|\nabla u - \nabla u_1|^{p(x)} dx   & \le \delta \fint_{\Omega_{2R}} \omega^{p(x)}|\nabla u|^{p(x)} dx \notag \\
& \qquad + C_{\delta} \fint_{\Omega_{2R}} \omega^{p(x)}|\mathbf{F}|^{p(x)} +  \omega^{p(x)}|\nabla \phi_2|^{p(x)} dx. 
\end{align}

To handle the next step regarding our assertion in Lemma~\ref{lem-step1}, we turn our attention to a homogeneous equation involving the given obstacle function $\phi_1$. More precisely, let us consider $u_2 \in u_1 + W_0^{1,p(\cdot)}\big(\Omega_{2R},\omega^{p(\cdot)}\big)$, a unique solution to the following variational formulation
\begin{align}\label{eq:DOP-2}
\int_{\Omega_{2R}}  |\mathbb{W}(x)\nabla u_2|^{p(x)-2} & \mathbb{W}(x)\nabla u_2 \cdot \mathbb{W}(x)\nabla \phi  dx \notag \\
& = \int_{\Omega_{2R}} |\mathbb{W}(x)\nabla \phi_1|^{p(x)-2}\mathbb{W}(x)\nabla \phi_1 \cdot \mathbb{W}(x)\nabla \phi dx,
\end{align}
for any $\phi \in W_0^{1,p(\cdot)}\big(\Omega_{2R},\omega^{p(\cdot)}\big)$. In analogy with the approach of the preceding step, it enables us to conclude $u_2 \ge \phi_1$ almost everywhere in $\Omega_{2R}$. Indeed, since~\eqref{eq:DOP-1} can be tested with $u_2$ and~\eqref{eq:DOP-2} with $u_2-u_1$, we arrive at
\begin{align}\notag
\int_{\Omega_{2R}} & \mathcal{J}_{p(x)}(\mathbb{W}(x)\nabla u_1,\mathbb{W}(x)\nabla u_2) dx \le \int_{\Omega_{2R}} |\mathbb{W}(x)\nabla \phi_1|^{p(x)-2}\mathbb{W}(x)\nabla \phi_1 \cdot \mathbb{W}(x)\nabla (u_2-u_1) dx \\
&~~~~~~~~~ \qquad \qquad + \int_{\Omega_{2R}} |\mathbb{W}(x)\nabla \phi_2|^{p(x)-2}\mathbb{W}(x)\nabla \phi_2 \cdot \mathbb{W}(x)\nabla (u_1 - u_2) dx. \notag
\end{align}
The proof follows along the same lines as those of~\eqref{Comp-est-5}, on making use of ~\eqref{Comp-est-2}, it is readily to verify that
\begin{align}\label{Comp-est-6}
\fint_{\Omega_{2R}} \omega^{p(x)}|\nabla u_1 - \nabla u_2|^{p(x)} dx   & \le \delta \fint_{\Omega_{2R}} \omega^{p(x)}|\nabla u|^{p(x)} dx \notag \\
& \quad + C_{\delta} \fint_{\Omega_{2R}} \omega^{p(x)}|\nabla \phi_1|^{p(x)} +  \omega^{p(x)}|\nabla \phi_2|^{p(x)} dx,
\end{align}
for every $\delta \in (0,1)$.

In the last step, it remains to derive the comparison estimate between $u_2$ and $v$, where $v \in u_2 + W_0^{1,p(\cdot)}\big(\Omega_{2R},\omega^{p(\cdot)}\big)$ is the unique solution to 
\begin{align}\label{eq:DOP-3}
\int_{\Omega_{2R}}  |\mathbb{W}(x)\nabla v|^{p(x)-2} & \mathbb{W}(x)\nabla v \cdot \mathbb{W}(x)\nabla \phi  dx = 0, \quad \forall \phi \in W_0^{1,p(\cdot)}\big(\Omega_{2R},\omega^{p(\cdot)}\big).
\end{align}
Follow a similar path, it yields that \begin{align}\label{Comp-est-7}
\fint_{\Omega_{2R}} \omega^{p(x)}|\nabla u_2 - \nabla v|^{p(x)} dx   & \le \delta \fint_{\Omega_{2R}} \omega^{p(x)}|\nabla u_2|^{p(x)} dx \notag \\
& \qquad + C_{\delta} \fint_{\Omega_{2R}} \omega^{p(x)}|\nabla \phi_1|^{p(x)} dx. 
\end{align}
Moreover, by standard computation, we infer that\begin{align}
\fint_{\Omega_{2R}} \omega^{p(x)}|\nabla v|^{p(x)} dx &\le C\fint_{\Omega_{2R}} \omega^{p(x)}|\nabla u_2|^{p(x)} dx \notag \\
& \le C\fint_{\Omega_{2R}} \omega^{p(x)} \big(|\nabla u_1|^{p(x)} + |\nabla \phi_1|^{p(x)}\big) dx \notag \\
& \le C\fint_{\Omega_{2R}} \omega^{p(x)} \big(|\nabla u|^{p(x)} + |\nabla \phi_1|^{p(x)} +  |\nabla \phi_2|^{p(x)}\big) dx. \label{step1b-3}
\end{align}
On the other hand, thanks to~\cite[Lemma 3.5]{BY24}, there exists a constant $\kappa>0$ such that
\begin{align}\label{step1b-0}
\fint_{\Omega_{R}} \left[\omega^{p(x)}|\nabla v|^{p(x)}\right]^{1+\epsilon} dx   & \le C \left[ \left(\fint_{\Omega_{2R}} \omega^{p(x)}|\nabla v|^{p(x)} dx\right)^{1 + \epsilon} + 1 \right],
\end{align}
for all $\epsilon \in (0,\kappa]$. It suffices to show that $v \in W^{1,p_2}\left(\Omega_{R},\omega^{p_2}\right)$ by using the $\log$-H\"older condition of function $p(\cdot)$ in~\eqref{cond:ap} and the higher integrability of $\nabla v$ given in~\eqref{step1b-0}. Indeed, thanks to~\eqref{cond:ap}, one obtains
\begin{align}\notag
p_2 \le p(x) + \alpha(4R) \le p(x) (1 + \alpha(4R)) \le p(x) (1 + \kappa),
\end{align}
and with a combination with~\eqref{step1b-0}, it leads to 
\begin{align}\label{step1b-1}
\fint_{\Omega_{R}} \omega^{p_2}  |\nabla v|^{p_2}  dx & \le \fint_{\Omega_{R}} (\omega^{p(x) (1 + \alpha(4R))}|\nabla u|^{p(x) (1 + \alpha(4R))} + 1) dx \notag \\
& \le C \left[\left(\fint_{\Omega_{2R}} \omega^{p(x)}|\nabla v|^{p(x)} dx\right)^{1 + \alpha(4R)} + 1\right] \notag \\
& \le C \left[\fint_{\Omega_{2R}} \omega^{p(x)}|\nabla v|^{p(x)} dx +1 \right] \left[\left(\fint_{\Omega_{2R}} \omega^{p(x)}|\nabla v|^{p(x)} dx\right)^{\alpha(4R)} + 1\right].
\end{align}
With the help of the global estimate~\eqref{glob-est} and the $\log$-H\"older condition~\eqref{cond:ap}, we claim that
\begin{align}\notag
\left(\fint_{\Omega_{2R}} \omega^{p(x)}|\nabla v|^{p(x)} dx\right)^{\alpha(4R)} &\le \left[ \frac{C}{(4R)^n} \int_{\Omega} \omega^{p(x)} \big(|\mathbf{F}|^{p(x)} + |\nabla \phi_1|^{p(x)} + |\nabla \phi_2|^{p(x)}\big) dx\right]^{\alpha(4R)} \\
& \le C \exp\left(\alpha(4R) \mathrm{log}\frac{1}{4R}\right) \notag \\
& \le C. \label{step1b-2}
\end{align}
It is worth mentioning that the constant $C$ here still depends on the data $\|\mathbf{F}\|_{L^{p(\cdot)}(\Omega,\omega^{p(\cdot)})}$, $\|\nabla \phi_1\|_{L^{p(\cdot)}(\Omega,\omega^{p(\cdot)})}$ and $\|\nabla \phi_2\|_{L^{p(\cdot)}(\Omega,\omega^{p(\cdot)})}$. Next, substituting~\eqref{step1b-3} and~\eqref{step1b-2} into~\eqref{step1b-1}, one obtains that
\begin{align}\label{est-step1b}
\fint_{\Omega_{R}} \omega^{p_2}  |\nabla v|^{p_2} & dx \le C \left(\fint_{\Omega_{2R}} \omega^{p(x)} \big(|\nabla u|^{p(x)} + |\nabla \phi_1|^{p(x)} +  |\nabla \phi_2|^{p(x)}\big) dx + 1\right),
\end{align}
which allows us to conclude $v \in W^{1,p_2}\left(\Omega_{R},\omega^{p_2}\right)$. 

Finally, we combine~\eqref{Comp-est-5},~\eqref{Comp-est-6} and~\eqref{Comp-est-7} to obtain~\eqref{est-step1a}. Consequently, the global estimate~\eqref{glob-est} will follow by choosing $R>0$ sufficiently large so that $\Omega \subset B_{2R}(x_0)$, which guarantees that the solution $v$ to~\eqref{eq:DOP-3} vanishes outside this ball. Hence,~\eqref{glob-est} can be concluded directly from~\eqref{est-step1a}. 
\end{proof}

The second step of the comparison strategy, presented in Lemma~\ref{lem-step2} below, is devoted to establishing the comparison estimate between $\nabla v$ and $\nabla \tilde{v}$. Therein, one considers a new homogeneous problem with constant $p_2$-growth, where $p_2 = \sup\limits_{x \in \Omega_{R}} p(x)$. 

\begin{lemma}[The 2$^\text{nd}$ step]
\label{lem-step2}
Under the assumptions of Lemma~\ref{lem-step1} and let $v \in W^{1,p(\cdot)}\big(\Omega_{2R},\omega^{p(\cdot)}\big) \cap W^{1,p_2}\left(\Omega_{R},\omega^{p_2}\right)$ be the solution to~\eqref{est-step1a}. Then, there exists $\tilde{v} \in v + W^{1,p_2}_{0}(\Omega_{R},\omega^{p_2})$ such that the following estimate holds true:
\begin{align}\label{est-step2.2}
\fint_{\Omega_{R}} \omega^{p_2} & |\nabla v - \nabla \tilde{v}|^{p_2} dx  \le \left(\delta + C_{\delta} \kappa^{\frac{p_2}{p_2-1}}\right) \fint_{\Omega_{2R}} \omega^{p(x)}|\nabla u|^{p(x)} dx \notag \\
& \qquad +   C_{\delta} \left[1 + \kappa^{\frac{p_2}{p_2-1}} \fint_{\Omega_{2R}} \omega^{p(x)} \big(|\nabla \phi_1|^{p(x)} + |\nabla \phi_2|^{p(x)}\big) dx \right],
\end{align}
for every $\delta \in (0,1)$. 
\end{lemma}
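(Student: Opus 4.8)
The plan is to freeze the variable exponent $p(\cdot)$ on the ball $\Omega_R$ at the constant value $p_2 = \sup_{\Omega_R} p$, and compare the solution $v$ of the homogeneous $p(\cdot)$-equation \eqref{eq:DOP-3} with the solution $\tilde v$ of the analogous homogeneous problem with constant $p_2$-growth, i.e. $\tilde v \in v + W^{1,p_2}_0(\Omega_R,\omega^{p_2})$ solving $\int_{\Omega_R}|\mathbb{W}\nabla\tilde v|^{p_2-2}\mathbb{W}^2\nabla\tilde v\cdot\nabla\phi\,dx = 0$ for all test functions $\phi\in W^{1,p_2}_0(\Omega_R,\omega^{p_2})$. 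First I would subtract the two weak formulations, test with $\phi = v-\tilde v$ (which is admissible since $v\in W^{1,p_2}(\Omega_R,\omega^{p_2})$ by the conclusion of Lemma~\ref{lem-step1}), and use the monotonicity inequality \eqref{classi-ineq} with exponent $p_2$ together with $\mathcal{J}_{p_2}$ to bound $\fint_{\Omega_R}\omega^{p_2}|\nabla v-\nabla\tilde v|^{p_2}dx$ from above by $\epsilon\fint_{\Omega_R}\omega^{p_2}|\nabla v|^{p_2}dx$ plus $C_\epsilon$ times $\fint_{\Omega_R}\mathcal{J}_{p_2}(\mathbb{W}\nabla v,\mathbb{W}\nabla\tilde v)\,dx$. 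The latter $\mathcal{J}_{p_2}$ integral is then rewritten using that $v$ solves the $p(\cdot)$-equation: the difference between the $p_2$-operator applied to $v$ and zero is controlled by the discrepancy $\big||\mathbb{W}\nabla v|^{p_2-2}-|\mathbb{W}\nabla v|^{p(x)-2}\big|\,|\mathbb{W}\nabla v|$, which measures how far $p(x)$ is from $p_2$ on $\Omega_R$.

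The second ingredient would be to estimate this exponent-gap term. Using $0\le p_2-p(x)\le\alpha(4R)\le\kappa$ from the $\log$-Hölder condition \eqref{cond:ap}, one writes $|\mathbb{W}\nabla v|^{p_2-p(x)} = \exp\big((p_2-p(x))\log|\mathbb{W}\nabla v|\big)$ and splits according to whether $|\mathbb{W}\nabla v|\le 1$ or $>1$; on the set where it exceeds $1$ one invokes the higher integrability \eqref{step1b-0}--\eqref{est-step1b} of $\nabla v$ to absorb the small power, producing a factor $\kappa^{p_2/(p_2-1)}$ (after Young's inequality in the appropriate dual form) times $\fint_{\Omega_R}\omega^{p_2}|\nabla v|^{p_2}dx$ plus a bounded remainder. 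Then one converts the $p_2$-energy of $\nabla v$ on $\Omega_R$ back to the $p(x)$-energy of $\nabla v$ on $\Omega_{2R}$ via \eqref{est-step1b}, and the $p(x)$-energy of $\nabla v$ is in turn bounded by that of $\nabla u,\nabla\phi_1,\nabla\phi_2$ through \eqref{step1b-3}. Choosing $\epsilon$ small to absorb the $\epsilon\fint\omega^{p_2}|\nabla v|^{p_2}$ term and renaming constants yields \eqref{est-step2.2}, with the $\delta$ coming from the Young-inequality splitting and the $C_\delta\kappa^{p_2/(p_2-1)}$ coming from the exponent gap.

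The main obstacle I expect is the exponent-gap estimate: controlling $\fint_{\Omega_R}\omega^{p_2}|\nabla v|^{p_2 - p(x)}\cdot|\nabla v|^{p(x)}\,dx$ in a way that genuinely extracts the small power $\kappa$ (rather than just a harmless constant) requires carefully interpolating between the $p(x)$-integrability one has for free and the slightly-higher $p_2(1+\epsilon)$-integrability from \eqref{step1b-0}, and then applying Young's inequality with exponents tuned so that the small quantity $\alpha(4R)^{?}$ or $\kappa$ appears raised to the conjugate power $p_2/(p_2-1)$. One must also be careful that all constants depend only on \textsc{dataset} and on the (already fixed) data norms $\|\mathbf{F}\|$, $\|\nabla\phi_1\|$, $\|\nabla\phi_2\|$ as in Lemma~\ref{lem-step1}, and that the Muckenhoupt bound \eqref{well-known-1} for $\omega^{p_2}$ is available — which it is, since Lemma~\ref{lem:well-known} gives $\omega^{q}\in\mathcal{A}_q$ uniformly for $q\in[p_{\min},p_{\max}]$ under $\mathbf{(H_2)}$. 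A minor technical point is verifying that $\tilde v$ is well-defined, i.e. that the constant-exponent weighted problem on $\Omega_R$ admits a unique solution in $v + W^{1,p_2}_0(\Omega_R,\omega^{p_2})$, which follows from standard monotone operator theory since $\omega^{p_2}\in\mathcal{A}_{p_2}$.
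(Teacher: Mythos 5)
Your overall scheme matches the paper's: the same comparison function $\tilde v \in v + W^{1,p_2}_0(\Omega_R,\omega^{p_2})$ solving the frozen constant-$p_2$ homogeneous problem, the same testing with $v-\tilde v$, the same use of the monotonicity inequality~\eqref{classi-ineq} applied to $\mathcal{J}_{p_2}$, and the same conversion from $\Omega_R$/$p_2$-energy back to $\Omega_{2R}$/$p(x)$-energy via~\eqref{est-step1b} and~\eqref{step1b-3}.

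There is, however, a genuine gap precisely in the exponent-gap estimate that you flag as the main obstacle. You write the log-H\"older condition as $\alpha(4R)\le\kappa$ and suggest that, after Young's inequality, this directly yields the factor $\kappa^{p_2/(p_2-1)}$. That is not enough. After the pointwise estimate (the paper invokes the bound $\big||y|^{p_2-1}-|y|^{p(x)-1}\big|\le C\alpha(2R)\bigl[1+|y|^{p_2-1}\log(e+|y|^{p_2})\bigr]$, which is what your exponential/Taylor expansion would produce once carried out) and Young's inequality, the dominant term is
\begin{align*}
[\alpha(2R)]^{\frac{p_2}{p_2-1}}\fint_{\Omega_R}\omega^{p_2}|\nabla v|^{p_2}\,\mathrm{log}^{\frac{p_2}{p_2-1}}\bigl(e+\omega^{p_2}|\nabla v|^{p_2}\bigr)\,dx,
\end{align*}
and the log-weighted integral is \emph{not} bounded by a uniform constant times $\fint_{\Omega_R}\omega^{p_2}|\nabla v|^{p_2}\,dx$: the logarithm can grow without bound. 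A pointwise split on $|\mathbb{W}\nabla v|\le 1$ versus $>1$ does not resolve this, because on the second set you still carry an unbounded $\mathrm{log}|\mathbb{W}\nabla v|$. The paper instead splits on the \emph{averaged} quantity $\fint_{\Omega_R}\omega^{p_2}|\nabla v|^{p_2}\,dx\lessgtr 1$: in the first case it uses a reverse-H\"older-type absorption of the log (the normalization $\omega^{p_2}|\nabla v|^{p_2}/\fint$ inside the log), and in the second case it isolates a factor $\mathrm{log}^{p_2/(p_2-1)}\bigl(\fint_{\Omega_R}\omega^{p_2}|\nabla v|^{p_2}\,dx\bigr)$ and bounds this by $C\bigl[\mathrm{log}(1/(2R))+1\bigr]^{p_2/(p_2-1)}$ using the global estimate~\eqref{glob-est} together with~\eqref{est-step1b} and~\eqref{Prob-4-est-1}. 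Only then does the full log-H\"older condition $\alpha(2R)\,\mathrm{log}(1/(2R))\le\kappa$ deliver the factor $\kappa^{p_2/(p_2-1)}$. With your weaker input $\alpha(4R)\le\kappa$ alone, you would be left with an $R$-dependent (potentially large) coefficient in front of the energy, which is not what~\eqref{est-step2.2} asserts. To repair the plan, you must explicitly invoke the global a priori bound on the total energy to control $\mathrm{log}\bigl(\fint\omega^{p_2}|\nabla v|^{p_2}\,dx\bigr)$ by $\mathrm{log}(1/R)$, and then use the multiplicative structure $\alpha(2R)\,\mathrm{log}(1/(2R))\le\kappa$ rather than $\alpha(2R)\le\kappa$.
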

\begin{proof}
Let us consider $\tilde{v} \in v + W^{1,p_2}_{0}(\Omega_{R},\omega^{p_2})$ the unique solution to the variational formulation
\begin{align}\label{Prob-4}
\int_{\Omega_{R}}  |\mathbb{W}(x)\nabla \tilde{v}|^{p_2-2} & \mathbb{W}(x)\nabla \tilde{v} \cdot \mathbb{W}(x)\nabla \phi  dx = 0, \quad \text{for any} \ \phi \in W^{1,p_2}_{0}(\Omega_{R},\omega^{p_2}). 
\end{align}
By the standard estimate in $L^{p_2}(\Omega_{R},\omega^{p_2})$ for $\tilde{v}$, it is easily to show that
\begin{align}\label{Prob-4-est-1}
\fint_{\Omega_{R}} \omega^{p_2}|\nabla \tilde{v}|^{p_2} dx   & \le C\fint_{\Omega_{R}} \omega^{p_2}|\nabla v|^{p_2} dx. 
\end{align}
Moreover, there exists a constant $\epsilon_1 \in (0, \kappa]$ such that
\begin{align}\label{RH-v}
\left[\fint_{\Omega_{R/2}} \left(\omega^{p_2}|\nabla \tilde{v}|^{p_2}\right)^{1+\epsilon} dx\right]^{\frac{1}{1+\epsilon}}   & \le C \left(\fint_{\Omega_{R}} \omega^{p_2}|\nabla \tilde{v}|^{p_2} dx + 1 \right),
\end{align}
for all $\epsilon \in (0,\epsilon_1]$. Let us test~\eqref{eq:DOP-3} and~\eqref{Prob-4} by $v - \tilde{v} \in W^{1,p_2}_{0}(\Omega_{R},\omega^{p_2})$ to arrive
\begin{align}
\int_{\Omega_{R}} & \mathcal{J}_{p_2}(\mathbb{W}(x)\nabla v,\mathbb{W}(x)\nabla \tilde{v}) dx \notag \\
&  = \int_{\Omega_{R}} \left[|\mathbb{W}(x)\nabla v|^{p_2-2} \mathbb{W}(x)\nabla v -  |\mathbb{W}(x)\nabla v|^{p(x)-2}  \mathbb{W}(x)\nabla v\right] \cdot \mathbb{W}(x)\nabla (v - \tilde{v})  dx, \notag
\end{align}
where $\mathcal{J}_{p_2}$ is defined as in~\eqref{def-Jpx}. To estimate the right-hand side, it allows us to apply the following fundamental inequality 
\begin{align*}
\left||y|^{p_2-1}-|y|^{p(x)-1}\right| \le C \alpha(2R)\left[1 + |y|^{p_2-1}\mathrm{log}(e + |y|^{p_2})\right], \quad y\in \mathbb{R}^n,
\end{align*}
from which, the interested reader may see~\cite[Lemma 3.14]{PNJGA} for a quite detailed proof. Since then, it follows that
\begin{align}
\fint_{\Omega_{R}} & \mathcal{J}_{p_2}(\mathbb{W}(x)\nabla v,\mathbb{W}(x)\nabla \tilde{v}) dx \notag \\ 
 &\qquad  \le C  \fint_{\Omega_{R}} \alpha(2R)\left[1 + \omega^{p_2-1} |\nabla v|^{p_2-1}\mathrm{log}(e + \omega^{p_2}|\nabla v|^{p_2})\right] \omega\left|\nabla v - \nabla \tilde{v}\right|  dx.\notag
\end{align}
In a similar fashion to Lemma~\ref{lem-step1},  for every $\delta \in (0,1)$, there holds\begin{align}
\fint_{\Omega_{R}} \omega^{p_2}|\nabla v - \nabla \tilde{v}|^{p_2} dx & \le \delta  \fint_{\Omega_{R}} \omega^{p_2}|\nabla v|^{p_2} dx + C_{\delta} \fint_{\Omega_{R}} \mathcal{J}_{p_2}(\mathbb{W}(x)\nabla v,\mathbb{W}(x)\nabla \tilde{v}) dx \notag \\
& \le \delta  \fint_{\Omega_{R}} \omega^{p_2}|\nabla v|^{p_2} dx \notag \\
& \qquad + C_{\delta}  \fint_{\Omega_{R}} \alpha(2R)\left[1 + \omega^{p_2-1} |\nabla v|^{p_2-1}\mathrm{log}(e + \omega^{p_2}|\nabla v|^{p_2})\right] \omega\left|\nabla v - \nabla \tilde{v}\right| dx.\notag
\end{align}
Thanks to Young's inequality, one obtains that
\begin{align}
\fint_{\Omega_{R}} & \omega^{p_2}|\nabla v - \nabla v|^{p_2} dx  \le \delta  \fint_{\Omega_{R}} \omega^{p_2}|\nabla v|^{p_2} dx \notag \\
& \qquad  + C_{\delta} \left[1 +  [\alpha(2R)]^{\frac{p_2}{p_2-1}} \fint_{\Omega_{R}} \omega^{p_2} |\nabla v|^{p_2}\mathrm{log}^{\frac{p_2}{p_2-1}}(e + \omega^{p_2}|\nabla v|^{p_2}) dx \right].\label{Prob-4-est-2}
\end{align}
Thanks to~\cite[Lemma 3.13]{PNJGA}, if provided $0<\displaystyle{\fint_{\Omega_{R}} \omega^{p_2}|\nabla v|^{p_2} dx} \le 1$, we infer that 
\begin{align}\notag
\fint_{\Omega_{R}} \omega^{p_2} |\nabla v|^{p_2}\mathrm{log}^{\frac{p_2}{p_2-1}} & \left(e + \omega^{p_2}|\nabla v|^{p_2}\right) dx \notag \\
& \le \fint_{\Omega_{R}} \omega^{p_2} |\nabla v|^{p_2}\mathrm{log}^{\frac{p_2}{p_2-1}}\left(e + \frac{\omega^{p_2}|\nabla v|^{p_2}}{\fint_{\Omega_{R}} \omega^{p_2}|\nabla v|^{p_2} dx}\right) dx \notag \\
& \le C\left[\fint_{\Omega_{R}} \left(\omega^{p_2} |\nabla v|^{p_2}\right)^{1+\epsilon} dx\right]^{\frac{1}{1+\epsilon}},\notag
\end{align}
for every $\epsilon>0$. By following the same lines as in the proof of~\eqref{step1b-1}, choosing $\epsilon>0$ such that $(1+\epsilon) (1 + \alpha(2R)) \le 1 + \kappa$ and applying~\eqref{step1b-0}, we deduce that
\begin{align}\notag
\left[\fint_{\Omega_{R}} \left(\omega^{p_2} |\nabla v|^{p_2}\right)^{1+\epsilon} dx\right]^{\frac{1}{1+\epsilon}} & \le \left[\fint_{\Omega_{R}} (\omega^{p(x)(1+\epsilon) (1 + \alpha(2R))}|\nabla v|^{p(x)(1+\epsilon)(1 + \alpha(2R))} + 1) dx\right]^{\frac{1}{1+\epsilon}} \notag \\
& \le C \left[\left(\fint_{\Omega_{2R}} \omega^{p(x)}|\nabla v|^{p(x)} dx\right)^{1 + \alpha(2R)} + 1\right] \notag \\
& \le C \left[\fint_{\Omega_{2R}} \omega^{p(x)}|\nabla v|^{p(x)} dx +1 \right] \left[\left(\fint_{\Omega_{2R}} \omega^{p(x)}|\nabla v|^{p(x)} dx\right)^{\alpha(2R)} + 1\right] \notag \\ 
& \le C \left[\fint_{\Omega_{2R}} \omega^{p(x)}|\nabla v|^{p(x)} dx +1 \right].\notag
\end{align}
Therefore, we arrive at
\begin{align}\label{Prob-4-est-3}
\fint_{\Omega_{R}} \omega^{p_2} |\nabla v|^{p_2}\mathrm{log}^{\frac{p_2}{p_2-1}}\left(e + \omega^{p_2}|\nabla v|^{p_2}\right) dx & \le C \left[\fint_{\Omega_{2R}} \omega^{p(x)}|\nabla v|^{p(x)} dx +1 \right].
\end{align}
At this stage, let us now consider the remaining case when $\fint_{\Omega_{R}} \omega^{p_2}|\nabla v|^{p_2} dx > 1$. We first apply the following elementary inequality
\begin{align*}
\mathrm{log}^{\beta}(e+ab) \le 2^{\beta}\left[\mathrm{log}^{\beta}(e+a) + \mathrm{log}^{\beta}(b)\right], \ \mbox{ for all } \ \beta>0, \ a>0, \ b>1,
\end{align*}
and taking~\eqref{Prob-4-est-3} into account, it yields
\begin{align}\notag
\fint_{\Omega_{R}} \omega^{p_2} |\nabla v|^{p_2} & \mathrm{log}^{\frac{p_2}{p_2-1}} \left(e + \omega^{p_2}|\nabla v|^{p_2}\right) dx \notag \\
& \le C\fint_{\Omega_{R}} \omega^{p_2} |\nabla v|^{p_2}\mathrm{log}^{\frac{p_2}{p_2-1}}\left(e + \frac{\omega^{p_2}|\nabla v|^{p_2}}{\fint_{\Omega_{R}} \omega^{p_2}|\nabla v|^{p_2} dx}\right) dx \notag \\
& \qquad \qquad + C\fint_{\Omega_{R}} \omega^{p_2} |\nabla v|^{p_2}\mathrm{log}^{\frac{p_2}{p_2-1}}\left(\fint_{\Omega_{R}} \omega^{p_2}|\nabla v|^{p_2} dx\right) dx\notag \\
& \le C\left[\fint_{\Omega_{2R}} \omega^{p(x)} |\nabla v|^{p(x)} dx + 1\right] \notag \\
& \qquad \qquad + C\mathrm{log}^{\frac{p_2}{p_2-1}}\left(\fint_{\Omega_{R}} \omega^{p_2}|\nabla v|^{p_2} dx\right) \fint_{\Omega_{R}} \omega^{p_2} |\nabla v|^{p_2} dx. \notag
\end{align}
Next, the logarithmic term on the right-hand side can be estimated by applying~\eqref{Prob-4-est-1} and~\eqref{est-step1b} as follows
\begin{align}
\mathrm{log}\left(\fint_{\Omega_{R}} \omega^{p_2}|\nabla v|^{p_2} dx\right) &\le C \left[\mathrm{log} \left(\frac{1}{R^n} \int_{\Omega} \omega^{p(x)} \left(|\mathbf{F}|^{p(x)} + |\nabla \phi_1|^{p(x)} +  |\nabla \phi_2|^{p(x)}\right) dx + 1\right) \right] \notag \\
&  \le C \left[\mathrm{log} \left(\frac{1}{2R}\right) + 1\right], \notag
\end{align}
and note that the constant $C>0$ here depends on $\|\mathbf{F}\|_{L^{p(\cdot)}(\Omega,\omega^{p(\cdot)})}$, $\|\nabla \phi_1\|_{L^{p(\cdot)}(\Omega,\omega^{p(\cdot)})}$ and $\|\nabla \phi_2\|_{L^{p(\cdot)}(\Omega,\omega^{p(\cdot)})}$. Then, reabsorbing this estimate and~\eqref{step1b-1} into the right-hand side to arrive at
\begin{align}
\fint_{\Omega_{R}} \omega^{p_2} |\nabla v|^{p_2} \mathrm{log}^{\frac{p_2}{p_2-1}} & \left(e + \omega^{p_2}|\nabla v|^{p_2}\right) dx \notag \\
& \le C\left[\fint_{\Omega_{2R}} \omega^{p(x)} |\nabla v|^{p(x)} dx + 1\right] \notag \\ 
& \qquad \qquad + C\left[\mathrm{log}^{\frac{p_2}{p_2-1}}\left(\frac{1}{2R}\right) \fint_{\Omega_{R}} \omega^{p_2} |\nabla v|^{p_2} dx + 1\right] \notag \\
& \le C\left[\mathrm{log}^{\frac{p_2}{p_2-1}}\left(\frac{1}{2R}\right) \fint_{\Omega_{2R}} \omega^{p(x)} |\nabla v|^{p(x)} dx + 1\right], \label{Prob-4-est-4}
\end{align}
Combining~\eqref{Prob-4-est-2},~\eqref{Prob-4-est-3} and~\eqref{Prob-4-est-4}, one obtains
\begin{align}
\fint_{\Omega_{R}} & \omega^{p_2}|\nabla v - \nabla \tilde{v}|^{p_2} dx  \le \delta  \fint_{\Omega_{R}} \omega^{p_2}|\nabla v|^{p_2} dx \notag \\
& \qquad  + C_{\delta} \left[[\alpha(2R)]^{\frac{p_2}{p_2-1}} \mathrm{log}^{\frac{p_2}{p_2-1}}\left(\frac{1}{2R}\right) \fint_{\Omega_{2R}} \omega^{p(x)} |\nabla v|^{p(x)} dx + 1 \right].\label{Prob-4-est-5}
\end{align}
Finally, we are at the point to conclude the assertion of~\eqref{est-step2.2} from~\eqref{Prob-4-est-5} by applying the $\log$-H\"older condition~\eqref{cond:ap} and two estimates~\eqref{step1b-3},~\eqref{est-step1b}.
\end{proof}

From Lemma~\ref{lem-step2}, it can be seen that solution $\tilde{v}$ exhibits higher integrability than $v$ on a concentric ball with a smaller radius. The next result provides us with another comparison level. Here, our objective is to further investigate a new homogeneous equation associated with the matrix-weight $\mathbb{W}_0$ (the logarithmic mean of $\mathbb{W}$), which admits a unique solution denoted by $\pi$. Lemma~\ref{lem-step3} will then establish the comparison between $\nabla \tilde{v}$ ($\tilde{v}$ is precribed in Lemma~\ref{lem-step2}) and $\nabla \pi$. It is worth noting that for this proof, the assumption $\mathbf{(H_2)}$ is required to guarantee the finiteness of the average integral of the weight function $\omega\omega_0^{-1}$, where $\omega_0$ denotes the logarithmic mean of $\omega$.

\begin{lemma}[The 3$^\text{rd}$ step]
\label{lem-step3}
Under the assumption of Lemma~\ref{lem-step2} and let $v \in W^{1,p(\cdot)}\big(\Omega_{2R},\omega^{p(\cdot)}\big) \cap W^{1,p_2}\left(\Omega_{R},\omega^{p_2}\right)$,  $\tilde{v} \in v + W^{1,p_2}_{0}(\Omega_{R},\omega^{p_2})$ be the solutions to~\eqref{est-step1a} and~\eqref{Prob-4}, respectively. Then, there exists $\pi \in \tilde{v} + W^{1,p_2}_{0}(\Omega_{R/2},\omega_0^{p_2})$ such that
\begin{align}\label{est-step3.2}
\fint_{\Omega_{R/2}}  \big|\omega & |\nabla\tilde{v}| - \omega_{0} |\nabla \pi|\big|^{p_2} dx  \le \left(\delta + C_{\delta} |\mathrm{log} \mathbb{W}|_{\mathrm{BMO}}\right) \fint_{\Omega_{2R}} \omega^{p(x)}|\nabla u|^{p(x)} dx \notag \\
& \qquad +   C_{\delta} |\mathrm{log} \mathbb{W}|_{\mathrm{BMO}} \left[1 +  \fint_{\Omega_{2R}} \omega^{p(x)} \big(|\nabla \phi_1|^{p(x)} + |\nabla \phi_2|^{p(x)}\big) dx \right],
\end{align}
for every $\delta \in (0,1)$, provided that $|\mathrm{log} \mathbb{W}|_{\mathrm{BMO}}$ small enough (described in Assumption $\mathbf{(H_2)}$). Here, for notational purposes, we further denote 
$$\mathbb{W}_0 = \langle \mathbb{W} \rangle_{B_{R/2}}^{\mathrm{log}}, \mbox{ and } \omega_0 = \langle \omega \rangle_{B_{R/2}}^{\mathrm{log}}.$$
\end{lemma}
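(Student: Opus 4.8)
The plan is to construct $\pi$ as the unique solution to the homogeneous $p_2$-Laplacian equation with the \emph{frozen} matrix-weight $\mathbb{W}_0 = \langle \mathbb{W}\rangle_{B_{R/2}}^{\mathrm{log}}$, namely
\begin{align}\notag
\int_{\Omega_{R/2}} |\mathbb{W}_0 \nabla \pi|^{p_2-2} \mathbb{W}_0 \nabla \pi \cdot \mathbb{W}_0 \nabla \phi\, dx = 0, \quad \text{for all } \phi \in W^{1,p_2}_0(\Omega_{R/2},\omega_0^{p_2}),
\end{align}
with the boundary datum $\pi - \tilde v \in W^{1,p_2}_0(\Omega_{R/2},\omega_0^{p_2})$. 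First I would test the equation for $\tilde v$ (that is, \eqref{Prob-4}) and the equation for $\pi$ against the admissible difference $\tilde v - \pi$, subtract, and organize the resulting identity so that the monotone quantity $\mathcal{J}_{p_2}(\mathbb{W}_0\nabla\tilde v, \mathbb{W}_0\nabla\pi)$ sits on the left; on the right one is left with the ``mismatch'' term involving $|\mathbb{W}(x)\nabla\tilde v|^{p_2-2}\mathbb{W}^2(x)\nabla\tilde v - |\mathbb{W}_0\nabla\tilde v|^{p_2-2}\mathbb{W}_0^2\nabla\tilde v$ paired with $\mathbb{W}_0\nabla(\tilde v - \pi)$. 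Then, using the classical inequality \eqref{classi-ineq} with exponent $p_2$ together with the norm equivalence coming from \eqref{Pi-2} (so that $\omega|\nabla\tilde v|$, $\omega_0|\nabla\pi|$, $|\mathbb{W}(x)\nabla\tilde v|$ and $|\mathbb{W}_0\nabla\pi|$ are all comparable up to $\Lambda$), I would convert the monotonicity estimate into a bound of the form
\begin{align}\notag
\fint_{\Omega_{R/2}} \big|\omega|\nabla\tilde v| - \omega_0|\nabla\pi|\big|^{p_2} dx \le \delta \fint_{\Omega_{R/2}} \omega_0^{p_2}|\nabla\tilde v|^{p_2} dx + C_\delta \fint_{\Omega_{R/2}} \Big(\text{mismatch}\Big)^{\frac{p_2}{p_2-1}} dx + (\text{absorbable term}),
\end{align}
after an application of Young's inequality in the same spirit as in Lemmas~\ref{lem-step1} and~\ref{lem-step2}.

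The core of the argument is controlling the mismatch term. The map $\xi \mapsto |\mathbb{M}\xi|^{p_2-2}\mathbb{M}^2\xi$ is Lipschitz in the matrix $\mathbb{M}$ (on the range of norms fixed by $\Lambda$), so pointwise one has $\big||\mathbb{W}(x)\xi|^{p_2-2}\mathbb{W}^2(x)\xi - |\mathbb{W}_0\xi|^{p_2-2}\mathbb{W}_0^2\xi\big| \lesssim |\mathbb{W}(x) - \mathbb{W}_0|\,\big(\omega(x)^{p_2-1} + \omega_0^{p_2-1}\big)|\xi|^{p_2-1}$, up to a constant depending on $p_{\max}$ and $\Lambda$. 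Plugging this in, the right-hand side carries a factor $\fint_{\Omega_{R/2}} |\mathbb{W}(x) - \mathbb{W}_0|^{\frac{p_2}{p_2-1}\cdot(\text{something})}\,\omega^{p_2}|\nabla\tilde v|^{p_2} dx$, which I would handle by Hölder's inequality: split into the BMO-type factor and the gradient factor. Here Lemma~\ref{lem:well-known}, specifically \eqref{well-known-2}, gives $\big(\fint_{\Omega\cap B}|(\mathbb{W}-\langle\mathbb{W}\rangle^{\log})/\langle\mathbb{W}\rangle^{\log}|^q\big)^{1/q} \le C|\mathrm{log}\mathbb{W}|_{\mathrm{BMO}}$ for any fixed $q$, so that the matrix-oscillation factor is bounded by $|\mathrm{log}\mathbb{W}|_{\mathrm{BMO}}$ to a positive power; and the higher-integrability estimate \eqref{RH-v} for $\nabla\tilde v$ (valid on $\Omega_{R/2}$ since $\tilde v$ solves \eqref{Prob-4}) absorbs the extra exponent on $\omega^{p_2}|\nabla\tilde v|^{p_2}$ coming from Hölder, leaving a clean $\fint_{\Omega_R}\omega^{p_2}|\nabla\tilde v|^{p_2} dx + 1$.

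Once the mismatch is bounded by $|\mathrm{log}\mathbb{W}|_{\mathrm{BMO}}$ (raised to a positive power, which since this seminorm is small is at least as good) times $\fint_{\Omega_R}\omega^{p_2}|\nabla\tilde v|^{p_2} dx + 1$, the remaining task is purely bookkeeping: push this bound through \eqref{Prob-4-est-1} to replace $\nabla\tilde v$ by $\nabla v$, then through \eqref{step1b-3} and \eqref{est-step1b} to replace $\nabla v$ by $\nabla u$, $\nabla\phi_1$, $\nabla\phi_2$ on the larger ball $\Omega_{2R}$, arriving at \eqref{est-step3.2}. I also need that $\pi \in \tilde v + W^{1,p_2}_0(\Omega_{R/2},\omega_0^{p_2})$ is well-defined: this follows from standard monotone-operator theory once one knows $\fint_{\Omega_{R/2}}\omega_0 \omega^{-1}\,dx < \infty$ and $\fint_{\Omega_{R/2}}\omega\,\omega_0^{-1}\,dx < \infty$, which is exactly where Assumption $\mathbf{(H_2)}$ enters: by \eqref{pro-1-w} and the John–Nirenberg inequality applied to $\mathrm{log}\,\omega$ (whose BMO norm is controlled by $|\mathrm{log}\mathbb{W}|_{\mathrm{BMO}} \le \kappa$), the ratio $\omega/\omega_0$ lies in $L^q$ on $B_{R/2}$ for $q$ large when $\kappa$ is small, so $W^{1,p_2}(\Omega_{R/2},\omega_0^{p_2})$ and $W^{1,p_2}(\Omega_{R/2},\omega^{p_2})$ are comparable. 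The main obstacle I anticipate is the exponent juggling in the mismatch estimate — one must choose the Hölder exponents so that simultaneously (i) the matrix-oscillation factor falls under the scope of \eqref{well-known-2}, (ii) the slightly-higher power of $\omega^{p_2}|\nabla\tilde v|^{p_2}$ is covered by the reverse-Hölder exponent $1+\epsilon_1$ from \eqref{RH-v}, and (iii) the near-boundary version (when $B_{R/2}$ meets $\partial\Omega$) still works, which relies on the Reifenberg flatness only through the validity of \eqref{RH-v} up to the boundary. Keeping all these constraints compatible, while only asking $|\mathrm{log}\mathbb{W}|_{\mathrm{BMO}}$ to be small (not the other structural constants), is the delicate point.
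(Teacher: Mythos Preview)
Your proposal is correct and mirrors the paper's proof: freeze the weight to $\mathbb{W}_0$, test both equations against $\tilde v - \pi$ and subtract, bound the mismatch $\mathcal{K}(x) = |\mathbb{W}\nabla\tilde v|^{p_2-2}\mathbb{W}^2\nabla\tilde v - |\mathbb{W}_0\nabla\tilde v|^{p_2-2}\mathbb{W}_0^2\nabla\tilde v$ pointwise (the paper does this via the mean value theorem, picking up an extra ratio factor $|\mathbb{W}_0|/|\mathbb{W}(x)|+1$ that your Lipschitz heuristic hides but which is harmless by the same weight-ratio bound), then H\"older-split using \eqref{well-known-2}, \eqref{RH-v}, and the bound $\fint_{\Omega_{R/2}}\big((\omega/\omega_0)^q + (\omega_0/\omega)^q\big)\,dx \le C$ from $\mathbf{(H_2)}$. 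The one step you elided is that monotonicity plus \eqref{classi-ineq} yields control of $\omega_0^{p_2}|\nabla\tilde v - \nabla\pi|^{p_2}$, not directly of $\big|\omega|\nabla\tilde v| - \omega_0|\nabla\pi|\big|^{p_2}$; the paper bridges this via the triangle inequality $\big|\omega|\nabla\tilde v| - \omega_0|\nabla\pi|\big| \le |\omega - \omega_0|\,|\nabla\tilde v| + \omega_0|\nabla\tilde v - \nabla\pi|$ and handles the first summand by a second, separate H\"older/\,log-BMO estimate (the $\mathcal{K}_3,\mathcal{K}_4$ terms).
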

\begin{proof}
First we show $\tilde{v} \in W^{1,p_2}_{0}(\Omega_{R/2},\omega_0^{p_2})$. Indeed, as the reader may easily check that $\Lambda^{-1} \omega_0 \le |\mathbb{W}_0| \le \omega_0$. Moreover, for every $q \ge 1$, one can find $\kappa>0$ small enough such that if $|\mathrm{log} \mathbb{W}|_{\mathrm{BMO}}<\kappa$,   then one has  
\begin{align}\label{w-w0-bound}
\fint_{\Omega_{R/2}} \left(\omega^q \omega_0^{-q} + \omega_0^q \omega^{-q}\right) dx \le C.
\end{align}
It is worthwhile to address that~\eqref{w-w0-bound} is a consequence of~\eqref{well-known-1} in Lemma~\ref{lem:well-known}. By applying H\"older's inequality for some $\epsilon \in (0,\epsilon_1]$, we observe that 
\begin{align}
\fint_{\Omega_{R/2}} \omega_0^{p_2}|\nabla \tilde{v}|^{p_2} dx & \le C \left[\fint_{\Omega_{R/2}} \left(\omega^{p_2}|\nabla \tilde{v}|^{p_2}\right)^{1+\epsilon} dx\right]^{\frac{1}{1+\epsilon}} \left[\fint_{\Omega_{R/2}} \left(\omega^{-p_2} \omega_0^{p_2}\right)^{\frac{\epsilon+1}{\epsilon}} dx\right]^{\frac{\epsilon}{\epsilon+1}}, \notag 
\end{align} 
and gathering~\eqref{RH-v} and~\eqref{w-w0-bound}, it leads to 
\begin{align}
\fint_{\Omega_{R/2}} \omega_0^{p_2}|\nabla \tilde{v}|^{p_2} dx & \le C \left[\fint_{\Omega_{R}} \omega^{p_2}|\nabla \tilde{v}|^{p_2} dx + 1\right], \label{Prob-5-est-0}
\end{align} 
which ensures that $\tilde{v} \in W^{1,p_2}_{0}(\Omega_{R/2},\omega_0^{p_2})$. At this stage, let us consider $\pi \in \tilde{v} + W^{1,p_2}_{0}(\Omega_{R/2},\omega_0^{p_2})$ the unique solution to 
\begin{align}\label{Prob-5}
\int_{\Omega_{R/2}}  |\mathbb{W}_0\nabla \pi|^{p_2-2} & \mathbb{W}_0\nabla \pi \cdot \mathbb{W}_0\nabla \phi  dx = 0, \quad \text{for any} \ \phi \in W^{1,p_2}_{0}(\Omega_{R/2},\omega_0^{p_2}).
\end{align}
The proof exploits an argument similar to previous problems. It therefore enables us to prove that there exists a constant $\epsilon_2 \in (0,\epsilon_1]$ such that
\begin{align}\label{RH-v5}
\left[\fint_{\Omega_{R/4}} \left(\omega_0^{p_2}|\nabla \pi|^{p_2}\right)^{1+\epsilon} dx\right]^{\frac{1}{1+\epsilon}}   & \le C \left(\fint_{\Omega_{R/2}} \omega_0^{p_2}|\nabla \pi|^{p_2} dx + 1 \right),
\end{align}
for all $\epsilon \in (0,\epsilon_2]$, and 
\begin{align}\label{Prob-5-est-1}
\fint_{\Omega_{R/2}} \omega_0^{p_2}|\nabla \pi|^{p_2} dx   & \le C\fint_{\Omega_{R/2}} \omega_0^{p_2}|\nabla \tilde{v}|^{p_2} dx. 
\end{align} 
It is important to note that inequalities~\eqref{Prob-5-est-0} and~\eqref{Prob-5-est-1} guarantee that  $\pi \in W^{1,p_2}(\Omega_{R/2},\omega_0^{p_2})$. Consequently, it allows us to take $\pi - \tilde{v} \in W^{1,p_2}_{0}(\Omega_{R/2},\omega_0^{p_2})$ as a test function in~\eqref{Prob-4} and~\eqref{Prob-5}, and thus we arrive at
\begin{align}
\int_{\Omega_{R/2}} \mathcal{J}_{p_2} \left(\mathbb{W}_0\nabla \pi,\mathbb{W}_0\nabla \tilde{v}\right)dx  & = \int_{\Omega_{R/2}} |\mathbb{W}(x)\nabla \tilde{v}|^{p_2-2} \mathbb{W}^2(x)\nabla \tilde{v} \cdot (\nabla \pi - \nabla\tilde{v}) dx \notag \\
 & \qquad - \int_{\Omega_{R/2}} |\mathbb{W}_0\nabla \tilde{v}|^{p_2-2} \mathbb{W}_0^2\nabla \tilde{v} \cdot (\nabla \pi - \nabla\tilde{v}) dx, \notag
\end{align}
where $\mathcal{J}_{p_2}$ is defined as in~\eqref{def-Jpx}. Thanks to~\eqref{classi-ineq}, one gets that
\begin{align}\label{Prob-5-est-2}
\fint_{\Omega_{R/2}} \omega_{0}^{p_2} \left|\nabla \pi - \nabla\tilde{v}\right|^{p_2} dx &\le \delta \fint_{\Omega_{R/2}} \omega_0^{p_2}|\nabla \tilde{v}|^{p_2} dx + C_{\delta} \fint_{\Omega_{R/2}} \mathcal{J}_{p_2} \left(\mathbb{W}_0\nabla \pi,\mathbb{W}_0\nabla \tilde{v}\right)dx \notag \\
& \le \delta \fint_{\Omega_{R/2}} \omega_0^{p_2}|\nabla \tilde{v}|^{p_2} dx + C_{\delta} \fint_{\Omega_{R/2}} |\mathcal{K}(x)| |\nabla \pi - \nabla\tilde{v}|dx,
\end{align}
for every $\delta \in (0,1)$, and the sake of exposition, we write
\begin{align}\notag
\mathcal{K}(x) := |\mathbb{W}(x)\nabla \tilde{v}|^{p_2-2} \mathbb{W}^2(x)\nabla \tilde{v}-|\mathbb{W}_0\nabla \tilde{v}|^{p_2-2} \mathbb{W}_0^2\nabla \tilde{v}.\notag
\end{align}
Moreover, to estimate $|\mathcal{K}(x)|$, we first analyze as follows
\begin{align}\notag
|\mathcal{K}(x)| & \le \left||\mathbb{W}(x)\nabla \tilde{v}|^{p_2-2} \mathbb{W}^2(x)\nabla \tilde{v}-|\mathbb{W}(x)\nabla \tilde{v}|^{p_2-2} \mathbb{W}_0\mathbb{W}(x)\nabla \tilde{v}\right| \\
& \qquad + \left||\mathbb{W}(x)\nabla \tilde{v}|^{p_2-2} \mathbb{W}_0\mathbb{W}(x)\nabla \tilde{v}-|\mathbb{W}_0\nabla \tilde{v}|^{p_2-2} \mathbb{W}_0^2\nabla \tilde{v}\right| \notag \\
& \le |\mathbb{W}(x)-\mathbb{W}_0||\mathbb{W}(x)\nabla \tilde{v}|^{p_2-1} \notag \\
& \qquad + |\mathbb{W}_0|\left||\mathbb{W}(x)\nabla \tilde{v}|^{p_2-2} \mathbb{W}(x)\nabla \tilde{v}-|\mathbb{W}_0\nabla \tilde{v}|^{p_2-2} \mathbb{W}_0\nabla \tilde{v}\right| \notag \\
& \le C \left(|\mathbb{W}(x)|^{p_2} + |\mathbb{W}_0|^{p_2}\right) \frac{|\mathbb{W}(x) - \mathbb{W}_0|}{|\mathbb{W}(x)| + |\mathbb{W}_0|} |\nabla \tilde{v}|^{p_2-1} \notag \\
& \qquad + |\mathbb{W}_0|\left||\mathbb{W}(x)\nabla \tilde{v}|^{p_2-2} \mathbb{W}(x)\nabla \tilde{v}-|\mathbb{W}_0\nabla \tilde{v}|^{p_2-2} \mathbb{W}_0\nabla \tilde{v}\right|.\notag
\end{align}
For the last term in the preceding inequality, we write, using the mean value theorem for the continuous map $\zeta \in \mathbb{R}^n \mapsto |\zeta|^{p_2-2}\zeta$. Specifically, there holds
\begin{align}
|\mathbb{W}_0|\big||\mathbb{W}(x)\nabla \tilde{v}|^{p_2-2} & \mathbb{W}(x)\nabla \tilde{v}-|\mathbb{W}_0\nabla \tilde{v}|^{p_2-2} \mathbb{W}_0\nabla \tilde{v}\big| \notag \\
&\le C |\mathbb{W}_0|\left|\tau \mathbb{W}(x)\nabla \tilde{v} + (1-\tau) \mathbb{W}_0\nabla \tilde{v}\right|^{p_2-2} |\mathbb{W}(x) - \mathbb{W}_0||\nabla \tilde{v}| \notag \\
& \le C |\mathbb{W}_0|\frac{\left(|\mathbb{W}(x)| + |\mathbb{W}_0|\right)^{p_2}}{\left|\tau \mathbb{W}(x) + (1-\tau) \mathbb{W}_0\right|^{2}} |\mathbb{W}(x) - \mathbb{W}_0||\nabla \tilde{v}|^{p_2-1} \notag \\
& \le C  \left(|\mathbb{W}(x)| + |\mathbb{W}_0|\right)^{p_2}  \left(\frac{|\mathbb{W}_0|}{|\mathbb{W}(x)|} + 1\right) \frac{|\mathbb{W}(x) - \mathbb{W}_0|}{|\mathbb{W}(x)| + |\mathbb{W}_0|} |\nabla \tilde{v}|^{p_2-1}. \notag
\end{align}
And then, we obtain the following estimate
\begin{align}
|\mathcal{K}(x)| \le C \left(|\mathbb{W}(x)| + |\mathbb{W}_0|\right)^{p_2} \left|\frac{\mathbb{W}(x) - \mathbb{W}_0}{\mathbb{W}_0}\right| \left(\frac{|\mathbb{W}_0|}{|\mathbb{W}(x)|} + 1\right) |\nabla \tilde{v}|^{p_2-1}. \notag
\end{align}
Plugging this estimate into~\eqref{Prob-5-est-2} and then invoking Young's inequality, we infer
\begin{align}\notag
& \fint_{\Omega_{R/2}}  \omega_{0}^{p_2} \left|\nabla \pi - \nabla\tilde{v}\right|^{p_2} dx 
 \le \delta \fint_{\Omega_{R/2}} \omega_0^{p_2}|\nabla \tilde{v}|^{p_2} dx \notag \\
& \quad + C_{\delta} \fint_{\Omega_{R/2}} \left[\left(\frac{|\mathbb{W}(x)|}{|\mathbb{W}_0|} + 1\right)^{p_2} \left(\frac{|\mathbb{W}_0|}{|\mathbb{W}(x)|} + 1\right)^{p_2} \left|\frac{\mathbb{W}(x) - \mathbb{W}_0}{\mathbb{W}_0}\right| |\mathbb{W}(x)|^{p_2-1}|\nabla \tilde{v}|^{p_2-1}\right]^{\frac{p_2}{p_2-1}} dx.\notag
\end{align}
For $\epsilon \in (0,\epsilon_1]$, thanks to H\"older's inequality, one gets
\begin{align}\label{Prob-5-est-3}
\fint_{\Omega_{R/2}}  \omega_{0}^{p_2}  \left|\nabla \pi - \nabla\tilde{v}\right|^{p_2} dx & \le \delta \fint_{\Omega_{R/2}} \omega_0^{p_2}|\nabla \tilde{v}|^{p_2} dx \notag \\
& \qquad + C_{\delta} \mathcal{K}_1 \mathcal{K}_2 \left[\fint_{\Omega_{R/2}} \left(\omega^{p_2}|\nabla \tilde{v}|^{p_2}\right)^{1+\epsilon} dx\right]^{\frac{1}{1+\epsilon}}.
\end{align}
where $\mathcal{K}_1, \mathcal{K}_2$ are respectively defined by
\begin{align*}
\mathcal{K}_1 = \left[\fint_{\Omega_{R/2}} \left[\left(\frac{|\mathbb{W}(x)|}{|\mathbb{W}_0|} + 1\right)^{p_2} \left(\frac{|\mathbb{W}_0|}{|\mathbb{W}(x)|} + 1\right)^{p_2}\right]^{\frac{p_2}{p_2-1} \frac{2(\epsilon+1)}{\epsilon}} dx\right]^{\frac{\epsilon}{2(1+\epsilon)}},
\end{align*}
and
\begin{align*}
\mathcal{K}_2 = \left[\fint_{\Omega_{R/2}} \left|\frac{\mathbb{W}(x) - \mathbb{W}_0}{\mathbb{W}_0}\right|^{\frac{p_2}{p_2-1} \frac{2(\epsilon+1)}{\epsilon}} dx\right]^{\frac{\epsilon}{2(1+\epsilon)}}.
\end{align*}
By~\eqref{w-w0-bound} and the definition of $|\mathrm{log} \mathbb{W}|_{\mathrm{BMO}}$ in~\eqref{BMO-norm}, inequality~\eqref{well-known-2} in Lemma~\ref{lem:well-known} gives us
\begin{align}\label{Prob-5-est-4}
\mathcal{K}_1 \le C \mbox{ and } \mathcal{K}_2 \le C |\mathrm{log} \mathbb{W}|_{\mathrm{BMO}}^{\frac{p_2}{p_2-1}} \le C |\mathrm{log} \mathbb{W}|_{\mathrm{BMO}},
\end{align}
which provided $|\mathrm{log} \mathbb{W}|_{\mathrm{BMO}}$ small enough. Moreover, thanks to~\eqref{RH-v} and in the same manner as that of previous proofs, one has
\begin{align}\label{Prob-5-est-5}
\left[\fint_{\Omega_{R/2}} \left(\omega^{p_2}|\nabla \tilde{v}|^{p_2}\right)^{1+\epsilon} dx\right]^{\frac{1}{1+\epsilon}} \le \left(\fint_{\Omega_{R}} \omega^{p_2}|\nabla \tilde{v}|^{p_2} dx + 1\right).
\end{align}
By merging~\eqref{Prob-5-est-4} and~\eqref{Prob-5-est-5} into~\eqref{Prob-5-est-3}, it yields
\begin{align}\label{Prob-5-est-6}
\fint_{\Omega_{R/2}}  \omega_{0}^{p_2}  \left|\nabla \pi - \nabla\tilde{v}\right|^{p_2} dx 
 & \le \delta \fint_{\Omega_{R/2}} \omega_0^{p_2}|\nabla \tilde{v}|^{p_2} dx \notag \\
 & \qquad + C_{\delta} |\mathrm{log} \mathbb{W}|_{\mathrm{BMO}} \left(\fint_{\Omega_{R}} \omega^{p_2}|\nabla \tilde{v}|^{p_2} dx + 1\right).
\end{align}
On the other hand, let us make use of H\"older's inequality to imply that
\begin{align}\notag
\fint_{\Omega_{R/2}}  \left|\mathbb{W}(x) - \mathbb{W}_0\right|^{p_2} \left|\nabla\tilde{v}\right|^{p_2} dx & \le  C \mathcal{K}_3 \mathcal{K}_4 \left[\fint_{\Omega_{R/2}} \left(\omega^{p_2}|\nabla \tilde{v}|^{p_2}\right)^{1+\epsilon} dx\right]^{\frac{1}{1+\epsilon}},
\end{align}
where the quantities $\mathcal{K}_3, \mathcal{K}_4$ can be estimated as follows
\begin{align*}
\mathcal{K}_3 = \left[\fint_{\Omega_{R/2}} \left(\omega^{-p_2} \omega_0^{p_2}\right)^{ \frac{2(\epsilon+1)}{\epsilon}} dx\right]^{\frac{\epsilon}{2(1+\epsilon)}} \le C,
\end{align*}
and
\begin{align*}
\mathcal{K}_4 = \left[\fint_{\Omega_{R/2}} \left|\frac{\mathbb{W}(x) - \mathbb{W}_0}{\mathbb{W}_0}\right|^{p_2 \frac{2(\epsilon+1)}{\epsilon}} dx\right]^{\frac{\epsilon}{2(1+\epsilon)}} \le C |\mathrm{log} \mathbb{W}|_{\mathrm{BMO}}^{p_2} \le C |\mathrm{log} \mathbb{W}|_{\mathrm{BMO}}.
\end{align*}
It then follows from~\eqref{Prob-5-est-5} that
\begin{align}\label{Prob-5-est-7}
\fint_{\Omega_{R/2}}  \left|\mathbb{W}(x) - \mathbb{W}_0\right|^{p_2} \left|\nabla\tilde{v}\right|^{p_2} dx & \le  C|\mathrm{log} \mathbb{W}|_{\mathrm{BMO}} \left(\fint_{\Omega_{R}} \omega^{p_2}|\nabla \tilde{v}|^{p_2} dx + 1\right).
\end{align}
On the other hand, we observe that
\begin{align}
\fint_{\Omega_{R/2}}  \big|\omega |\nabla\tilde{v}| - \omega_{0} |\nabla \pi|\big|^{p_2} dx & \le C \left[ \fint_{\Omega_{R/2}}  \big|\omega |\nabla\tilde{v}| - \omega_{0} |\nabla\tilde{v}|\big|^{p_2} dx + \fint_{\Omega_{R/2}}  \big|\omega_{0} |\nabla \pi| - \omega_{0} |\nabla\tilde{v}|\big|^{p_2} dx\right] \notag \\
  & \le C \left[\fint_{\Omega_{R/2}}  \left|\mathbb{W}(x) - \mathbb{W}_0\right|^{p_2} \left|\nabla\tilde{v}\right|^{p_2} dx + \fint_{\Omega_{R/2}}  \omega_{0}^{p_2}  \left|\nabla \pi - \nabla\tilde{v}\right|^{p_2} dx \right].\notag
\end{align}
Substituting~\eqref{Prob-5-est-6} and~\eqref{Prob-5-est-7} into the above estimate, one concludes
\begin{align}\label{Prob-5-est-8}
\fint_{\Omega_{R/2}}  \big|\omega |\nabla\tilde{v}| - \omega_{0} |\nabla \pi|\big|^{p_2} dx & \le \delta \fint_{\Omega_{R/2}} \omega_0^{p_2}|\nabla \tilde{v}|^{p_2} dx \notag \\
& \qquad + C_{\delta} |\mathrm{log} \mathbb{W}|_{\mathrm{BMO}} \left(\fint_{\Omega_{R}} \omega^{p_2}|\nabla \tilde{v}|^{p_2} dx + 1\right).
\end{align}
Now, the proof of~\eqref{est-step3.2} becomes clear by combining~\eqref{est-step1b},~\eqref{Prob-4-est-1},~\eqref{Prob-5-est-0},  and~\eqref{Prob-5-est-8}. We have thus completed the proof of Lemma~\ref{lem-step3}.
\end{proof}

Now we come to the last proof of the comparison scheme, which suggests comparing $\pi$ to $\tilde{\pi}$ that has a priori $L^{\infty}$-estimate. At this stage, the final result is followed by an argument using the assumption $\mathbf{(H_3)}$. 

\begin{lemma}[The 4$^\text{th}$ step]
\label{lem-step4}
Under the assumptions of Lemma~\ref{lem-step3} and let $\pi \in \tilde{v} + W^{1,p_2}_{0}(\Omega_{R/2},\omega_0^{p_2})$ be the solution to~\eqref{Prob-5}. Then, for every $\delta \in (0,1)$, there exists $\kappa>0$ such that if $\Omega$ satisfies the $(\kappa,r_0)$-Reifenberg flatness condition,  then we are possible to find $\tilde{\pi} \in W^{1,p_2}_{0}(\Omega_{R/2},\omega_0^{p_2}) \cap L^{\infty}(\Omega_{R/4},\omega_0^{p_2})$ such that
\begin{align}\label{est-step4.1}
\sup\limits_{x \in \Omega_{R/4}} \omega_{0}^{p_2}|\nabla \tilde{\pi}|^{p_2} &\le  C  \fint_{\Omega_{2R}} \omega^{p(x)} |\nabla u|^{p(x)} dx \notag \\
& \qquad + C\left(\fint_{\Omega_{2R}} \omega^{p(x)} \big(|\nabla \phi_1|^{p(x)} +  |\nabla \phi_2|^{p(x)}\big) dx + 1\right),
\end{align}
and
\begin{align}\label{est-step4.2}
\fint_{\Omega_{R/2}} \omega_{0}^{p_2}\big|\nabla \tilde{\pi} - \nabla \pi\big|^{p_2} dx &\le \delta \fint_{\Omega_{2R}} \omega^{p(x)} |\nabla u|^{p(x)} dx \notag \\
& \qquad + C\left(\fint_{\Omega_{2R}} \omega^{p(x)} \big(|\nabla \phi_1|^{p(x)} +  |\nabla \phi_2|^{p(x)}\big) dx + 1\right).
\end{align}
\end{lemma}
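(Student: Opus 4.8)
The plan is to remove the constant matrix weight $\mathbb{W}_0$ from \eqref{Prob-5} by a linear change of variables, invoke the classical interior and flat--boundary gradient estimates for the resulting $p_2$--Laplace equation, and then transport the bounds back through the comparison chain already built in Lemmas~\ref{lem-step1}--\ref{lem-step3}. Since $\mathbb{W}_0=\langle\mathbb{W}\rangle_{B_{R/2}}^{\log}$ and $\omega_0=\langle\omega\rangle_{B_{R/2}}^{\log}$ are \emph{constant}, with $\Lambda^{-1}\omega_0\le|\mathbb{W}_0|\le\omega_0$, the substitution $y=\mathbb{W}_0^{-1}x$, $\psi(y):=\pi(\mathbb{W}_0 y)$ turns \eqref{Prob-5} into the genuine $p_2$--Laplace equation $\mathrm{div}\big(|\nabla\psi|^{p_2-2}\nabla\psi\big)=0$, and one has the pointwise equivalence $\Lambda^{-p_2}\,\omega_0^{p_2}|\nabla\pi|^{p_2}\le|\nabla\psi|^{p_2}\le\omega_0^{p_2}|\nabla\pi|^{p_2}$. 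Moreover $\mathbb{W}_0$ has condition number at most $\Lambda$, so it maps balls to ellipsoids of bounded eccentricity and the image $\mathbb{W}_0^{-1}(\Omega_{R/2})$ is again $(C\Lambda\kappa,r_0)$--Reifenberg flat, which is admissibly small once $\kappa$ is small. We then split according to whether $\Omega_{R/2}=B_{R/2}(x_0)$ is an interior ball or meets $\partial\Omega$.

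In the interior case $B_{R/2}(x_0)\subset\Omega$ we simply put $\widetilde\pi:=\pi$, so that \eqref{est-step4.2} holds trivially. For \eqref{est-step4.1} I would use the classical interior Lipschitz bound for the $p_2$--Laplacian (Uhlenbeck, DiBenedetto, Tolksdorf) applied to $\psi$ on the transformed ball, which after undoing the change of variables reads $\sup_{\Omega_{R/4}}\omega_0^{p_2}|\nabla\pi|^{p_2}\le C\fint_{\Omega_{R/2}}\omega_0^{p_2}|\nabla\pi|^{p_2}\,dx$; chaining \eqref{Prob-5-est-1}, \eqref{Prob-5-est-0}, \eqref{Prob-4-est-1} and \eqref{est-step1b} then bounds the right--hand side by $\fint_{\Omega_{2R}}\omega^{p(x)}|\nabla u|^{p(x)}\,dx$ plus the obstacle and constant terms, which is precisely \eqref{est-step4.1}.

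In the boundary case, after flattening we may assume (up to a rotation and a translation) that $B_{R/2}^{+}\subset\Omega_{R/2}\subset B_{R/2}\cap\{x_n>-C\kappa R\}$. Since $\pi$ inherits the zero trace of $u$ on $\partial\Omega\cap B_{R/2}$, I would extend $\pi$ by $0$ across $\partial\Omega$ and let $\widetilde\pi$ be the solution of the $\mathbb{W}_0$--weighted $p_2$--problem on the half--ball $B_{R/2}^{+}$ whose boundary datum is this zero--extension (then extended again by $0$ to $B_{R/2}\setminus B_{R/2}^{+}$). Using the zero--extension of $\pi$ as an admissible competitor gives the energy comparison $\fint_{B_{R/2}^{+}}\omega_0^{p_2}|\nabla\widetilde\pi|^{p_2}\,dx\le C\fint_{\Omega_{R/2}}\omega_0^{p_2}|\nabla\pi|^{p_2}\,dx$, and the flat--boundary Lipschitz estimate for the $p_2$--Laplacian, transported through the linear change of variables, upgrades it to $\sup_{\Omega_{R/4}}\omega_0^{p_2}|\nabla\widetilde\pi|^{p_2}\le C\fint_{\Omega_{R/2}}\omega_0^{p_2}|\nabla\pi|^{p_2}\,dx$; the same chaining as above then yields \eqref{est-step4.1}. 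For \eqref{est-step4.2} I would test the weak formulations of $\pi$ and of $\widetilde\pi$ (suitably zero--extended) against $\pi-\widetilde\pi$ and use the monotonicity inequality \eqref{classi-ineq}, reducing matters to an integral of $\omega_0^{p_2}\big(|\nabla\pi|^{p_2-1}+|\nabla\widetilde\pi|^{p_2-1}\big)|\nabla\pi-\nabla\widetilde\pi|$ over the set $D:=\Omega_{R/2}\triangle B_{R/2}^{+}$, which by Reifenberg flatness satisfies $|D|\le C\kappa|B_{R/2}|$. Splitting this integral by H\"older's inequality and invoking the reverse--H\"older higher integrability of $\nabla\pi$ (obtained exactly as \eqref{RH-v5}, up to the flat boundary) together with the $L^\infty$ bound just established for $\nabla\widetilde\pi$ produces a gain of a power $\kappa^{\theta}$ with $\theta=\theta(p_2,\epsilon)>0$; after reabsorption and one more use of the chaining one gets $\fint_{\Omega_{R/2}}\omega_0^{p_2}|\nabla\pi-\nabla\widetilde\pi|^{p_2}\,dx\le C\kappa^{\theta}\big(\fint_{\Omega_{2R}}\omega^{p(x)}|\nabla u|^{p(x)}\,dx+\fint_{\Omega_{2R}}\omega^{p(x)}(|\nabla\phi_1|^{p(x)}+|\nabla\phi_2|^{p(x)})\,dx+1\big)$, and choosing $\kappa$ so small that $C\kappa^{\theta}\le\delta$ gives \eqref{est-step4.2}.

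The main obstacle is the boundary case: one must pick the admissible class and the boundary datum for $\widetilde\pi$ so that \emph{both} the energy comparison and the global (up to the flat boundary) gradient bound for the $p_2$--Laplacian are available, and one must track how the change of variables by $\mathbb{W}_0$ degrades the Reifenberg constant — it stays of order $\Lambda\kappa$, hence admissibly small. The conversion of geometric flatness into the factor $\kappa^{\theta}$ in \eqref{est-step4.2}, via the measure bound $|D|\le C\kappa|B_{R/2}|$ coupled with the reverse--H\"older inequality \eqref{RH-v5}, is the technical heart of the argument; the remaining arithmetic — chaining \eqref{Prob-5-est-1}, \eqref{Prob-5-est-0}, \eqref{Prob-4-est-1} and \eqref{est-step1b} to express everything through $\fint_{\Omega_{2R}}\omega^{p(x)}|\nabla u|^{p(x)}\,dx$ — is routine.
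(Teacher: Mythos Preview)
Your proposal is correct in substance but takes a more detailed route than the paper. The paper's own proof simply invokes \cite[Corollary 3.1]{BY24} as a black box, which furnishes $\tilde\pi$ together with the two ``local'' estimates
\[
\sup_{\Omega_{R/4}}\omega_0^{p_2}|\nabla\tilde\pi|^{p_2}\le C\Bigl(\fint_{\Omega_{R/2}}\omega_0^{p_2}|\nabla\pi|^{p_2}\,dx+1\Bigr),\qquad
\fint_{\Omega_{R/2}}\omega_0^{p_2}|\nabla\tilde\pi-\nabla\pi|^{p_2}\,dx\le\delta\Bigl(\fint_{\Omega_{R/2}}\omega_0^{p_2}|\nabla\pi|^{p_2}\,dx+1\Bigr),
\]
and then chains through \eqref{Prob-5-est-1}, \eqref{Prob-5-est-0}, \eqref{Prob-4-est-1}, \eqref{est-step1b} exactly as you do at the end. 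What you have written is, in effect, a sketch of the proof of that cited corollary: the linear change of variables $y=\mathbb{W}_0^{-1}x$ to reduce to the unweighted $p_2$--Laplacian, the interior Lipschitz bound when $B_{R/2}\subset\Omega$, and the Byun--Wang flat-boundary comparison (half-ball reference solution, measure bound $|D|\le C\kappa|B_{R/2}|$, reverse H\"older gain $\kappa^\theta$) in the boundary case. This is precisely the machinery behind \cite{BY24}, so your approach and the paper's are not genuinely different---you have simply unpacked the citation.

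Two minor points to tighten in the boundary case: first, the boundary datum for $\tilde\pi$ on the flat part $\{x_n=0\}$ must be chosen carefully (typically $\tilde\pi=0$ there, which need not agree with $\pi$ pointwise, and the discrepancy is absorbed into the $\kappa^\theta$ error via a Caccioppoli/Hardy-type estimate rather than by direct testing); second, the reverse H\"older inequality you need is a \emph{boundary} version valid up to $\partial\Omega\cap B_{R/2}$, not the interior statement \eqref{RH-v5}, though the proof is analogous once one uses the zero extension of $\pi$. Neither of these is a gap---both are standard in the Byun--Wang framework---but they are exactly the places where the details are delicate, as you yourself flag.
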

\begin{proof}
The proof technique goes back to \cite [Corollary 3.1]{BY24}, from which author showed that for every $\delta \in (0,1)$, there exists $\kappa>0$ such that, if provided $(\kappa,r_0)$-Reifenberg flat condition imposed to $\Omega$, there exists a function $\tilde{\pi} \in W^{1,p_2}_{0}(\Omega_{R/2},\omega_0^{p_2}) \cap L^{\infty}(\Omega_{R/4},\omega_0^{p_2})$ such that
\begin{align}\notag 
\sup\limits_{x \in \Omega_{R/4}} \omega_{0}^{p_2}|\nabla \tilde{\pi}|^{p_2} \le  C \left( \fint_{\Omega_{R/2}} \omega_{0}^{p_2}|\nabla \pi|^{p_2} dx + 1\right),
\end{align}
and
\begin{align}\notag
\fint_{\Omega_{R/2}} \omega_{0}^{p_2}\big|\nabla \tilde{\pi} - \nabla \pi\big|^{p_2} dx \le \delta \left( \fint_{\Omega_{R/2}} \omega_{0}^{p_2}|\nabla \pi|^{p_2} dx + 1\right).
\end{align}
Gathering all the comparison estimates from Lemma~\ref{lem-step1},~\ref{lem-step2} and~\ref{lem-step3}, one can easily obtain~\eqref{est-step4.1} and~\eqref{est-step4.2}. This finally finishes the proof. 
\end{proof}

\section{Estimates on level sets}
\label{sec:levelset}

In this section, we turn our attention to proving the level set estimates, which are based on the comparison estimates. The step of constructing the level-set inequality is a basic and important step when dealing with regularity and qualitative properties of solutions in the described approach. The main idea of the proof stemmed from a basic technique relying on a Calder\'on-Zygmund-type covering lemma applied to suitable level sets (see~\cite{AM2007, CP1998, BW2004}). In the context of solutions to problem~\eqref{OP-var} involving degenerate matrix-weights with obstacles, one further needs some delicate arguments to adapt the idea to our setting. Let us describe in detail the two significant improvements in the present paper. The first one, as shown in Section~\ref{sec:comparison}, the reader will see in action the tricks designed for the comparison scheme. To be more precise, due to the complicated structure of the models, the comparison strategy cannot be performed directly from the original weak solution $u$ to~\eqref{OP-var} to $\tilde{\pi}$, but rather through several steps, which is difficult to handle. The next one, as an advance of our contribution, will be highlighted in this section. Here, in our argument, we specifically establish an \emph{optimal} level-set inequality, which reduces the dependence of auxiliary parameters on the running $\varepsilon$ in measurable level sets. This improvement guarantees that the regularity estimates in certain function spaces can be derived directly in the subsequent step; meanwhile, the scaling parameters of such spaces are no longer constrained by adding extra assumptions. This is one of the key tools for our main results, and we also believe that the so-called large-scale level-set estimates could be adapted to treat gradient regularity results, even in a vast array of more generalized function spaces. 

\begin{theorem}
\label{theo-LV}
Let $u \in \mathbb{K}^\omega_{\phi_1,\phi_2}$ be a weak solution to the main problem~\eqref{OP-var} under assumptions~\eqref{Pi-2}-\eqref{def-phi-12}, and with given $\beta \in [0,n)$, $\varepsilon \in (0,1)$. Then, one can find some constants $\sigma \in (0,3^{-n})$, $\kappa = \kappa(\varepsilon) \in (0,\sigma)$ and $\lambda_0 = \lambda_0(\kappa)$ such that if  $\mathbf{(H_1)}$-$\mathbf{(H_3)}$ satisfy for some $r_0>0$, then the following level-set inequality holds true:
\begin{align}\label{ineq-LV}
\left|\left\{\mathbf{M}_{\beta}\big(\omega^{p(\cdot)} |\nabla u|^{p(\cdot)}\big) > \lambda, \, \mathbf{M}_{\beta}\mathbb{F}_{\omega} \le \kappa \lambda\right\}\right| \le C \varepsilon \left|\left\{\mathbf{M}_{\beta}\big(\omega^{p(\cdot)} |\nabla u|^{p(\cdot)}\big) > \sigma\lambda\right\}\right|,
\end{align}
for all $\lambda \ge \lambda_0$. Here, the constant $C$ depends on $\textsc{dataset}$.
\end{theorem}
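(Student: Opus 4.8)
The plan is to follow the classical Calder\'on-Zygmund level-set strategy, combined with the Vitali-type covering argument of Lemma~\ref{lem:Vitali}, but carefully tracking the dependence of the auxiliary radius and of $\kappa$ on $\varepsilon$ so that the scaling parameter $\sigma$ can be fixed \emph{independently} of $\varepsilon$. First I would introduce the two measurable sets
\begin{align*}
V_1 &= \left\{\mathbf{M}_{\beta}\big(\omega^{p(\cdot)} |\nabla u|^{p(\cdot)}\big) > \lambda, \ \mathbf{M}_{\beta}\mathbb{F}_{\omega} \le \kappa \lambda\right\}, \\
V_2 &= \left\{\mathbf{M}_{\beta}\big(\omega^{p(\cdot)} |\nabla u|^{p(\cdot)}\big) > \sigma\lambda\right\},
\end{align*}
and verify the two hypotheses of Lemma~\ref{lem:Vitali} with the running parameter $C\varepsilon$ in place of $\varepsilon$. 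The measure bound $|V_1| \le C\varepsilon R_0^n$ for a suitable $R_0 \le r_0$ follows from the global energy estimate~\eqref{glob-est} together with the weak-type bound for $\mathbf{M}_\beta$ in Proposition~\ref{lem:bound-M-beta}, provided $\lambda_0$ is chosen large enough depending on $\kappa$ (this is where the threshold $\lambda \ge \lambda_0(\kappa)$ enters). The heart of the matter is then the density/exit-time implication: if $\Omega \cap B_\rho(x_0) \not\subset V_2$, i.e.\ there is a point $y_0 \in \Omega \cap B_\rho(x_0)$ with $\mathbf{M}_\beta(\omega^{p(\cdot)}|\nabla u|^{p(\cdot)})(y_0) \le \sigma\lambda$, and moreover $V_1 \cap B_\rho(x_0) \ne \emptyset$ (otherwise there is nothing to prove), then I must show $|B_\rho(x_0) \cap V_1| < C\varepsilon |B_\rho(x_0)|$.

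The proof of that implication splits, as usual, into an interior case ($B_{2\rho}(x_0) \subset \Omega$, or more precisely when the concentric enlargement stays away from $\partial\Omega$) and a boundary case ($B_{2\rho}(x_0)$ meets $\partial\Omega$); the Reifenberg flatness assumption $\mathbf{(H_3)}$ is used exactly to handle the latter by the same scheme after flattening. In both cases I would localize on a ball of the form $\Omega_{2R}$ with $R \simeq \rho$, run the full four-step comparison chain of Lemmas~\ref{lem-step1}--\ref{lem-step4} to produce $\tilde\pi$ with the pointwise bound~\eqref{est-step4.1} and the comparison error~\eqref{est-step4.2}, and then estimate, for $x \in B_\rho(x_0) \cap V_1$,
\begin{align*}
\mathbf{M}_\beta\big(\omega^{p(\cdot)}|\nabla u|^{p(\cdot)}\big)(x) \le C\,\mathbf{M}_\beta^{2R}\big(\omega^{p(\cdot)}|\nabla u - \cdots|^{p(\cdot)} + \cdots\big)(x) + \mathbf{T}_\beta^{2R}(\cdots)(x),
\end{align*}
splitting the fractional maximal function into its local part (comparison errors plus the bounded term $\omega_0^{p_2}|\nabla\tilde\pi|^{p_2}$) and its tail part, which is controlled by $\sigma\lambda$ using the exit point $y_0$ and a comparison of averages over comparable balls. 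The key point is that the total comparison error, by~\eqref{est-step1a}, \eqref{est-step2.2}, \eqref{est-step3.2} and \eqref{est-step4.2}, is bounded by
\begin{align*}
\left(\delta + C_\delta\big(\kappa^{\frac{p_2}{p_2-1}} + |\mathrm{log}\mathbb{W}|_{\mathrm{BMO}} + \text{Reifenberg constant}\big)\right)\fint_{\Omega_{2R}}\omega^{p(x)}|\nabla u|^{p(x)}\,dx + C_\delta\big(1 + \fint \mathbb{F}_\omega\big),
\end{align*}
and since $y_0 \notin V_2$ controls the averaged energy by $C\sigma\lambda$ while the condition $\mathbf{M}_\beta \mathbb{F}_\omega(x') \le \kappa\lambda$ at a point $x' \in B_\rho(x_0)\cap V_1$ controls $\fint \mathbb{F}_\omega$ by $C\kappa\lambda$, the whole local error is $\le C(\delta + C_\delta\kappa^{\alpha} + \cdots)\lambda$. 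Choosing first $\delta$ small, then $\kappa$ (hence the $\log$-BMO and Reifenberg constants, all governed by the same $\kappa$) small — depending on $\varepsilon$ but \emph{not} on $\sigma$ — makes the set where the local contribution exceeds $c\sigma\lambda$ have measure $< C\varepsilon |B_\rho(x_0)|$ via the weak-type estimate of Proposition~\ref{lem:bound-M-beta} applied to the comparison map on $\Omega_{2R}$, after absorbing the $L^\infty$ bound on $\tilde\pi$ to dispose of that part outright by taking $\sigma$ below a universal constant like $3^{-n}$. Then Lemma~\ref{lem:Vitali} yields~\eqref{ineq-LV}.

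The main obstacle I anticipate is the bookkeeping needed to keep $\sigma$ genuinely independent of $\varepsilon$: one must arrange the comparison so that the $\tilde\pi$-term, which carries the only $\sigma$-sensitive constant (through its sup-norm bound scaling like a fixed multiple of the averaged energy, hence like $\sigma\lambda$), is handled by the choice of $\sigma$ alone, while \emph{all} the genuinely small quantities ($\delta$, $\kappa^{p_2/(p_2-1)}$, $|\mathrm{log}\mathbb{W}|_{\mathrm{BMO}}$, the Reifenberg flatness constant) are tied to $\varepsilon$ through $\kappa=\kappa(\varepsilon)$ and enter only multiplicatively against the energy. A secondary technical point is the variable-exponent scaling: the exponent $p_2 = \sup_{\Omega_R} p$ differs from $p(\cdot)$, so converting between $\omega^{p(\cdot)}|\nabla(\cdot)|^{p(\cdot)}$ and $\omega_0^{p_2}|\nabla(\cdot)|^{p_2}$ at the level of fractional maximal functions requires the $\log$-H\"older control~\eqref{cond:ap} together with the higher-integrability gains from~\eqref{step1b-0} and~\eqref{RH-v}, exactly as in the comparison lemmas; the radius $R$ must be taken below a threshold (comparable to $r_0$) so that $\alpha(4R)$ is as small as needed, which is why the statement carries the hypothesis that $\mathbf{(H_1)}$--$\mathbf{(H_3)}$ hold for some $r_0>0$ and the covering is performed at scale $R_0 \le r_0$.
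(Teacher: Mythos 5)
Your proposal follows the same Calder\'on--Zygmund level-set scheme as the paper: the Vitali-type covering of Lemma~\ref{lem:Vitali}, the measure bound on $V_1$ via the global estimate~\eqref{glob-est} and the weak-type bound of Proposition~\ref{lem:bound-M-beta} (which fixes $\lambda_0\sim\kappa^{-1}r_0^{\beta}$), the exit-time implication driven by the four comparison maps of Lemmas~\ref{lem-step1}--\ref{lem-step4}, the local/tail split of $\mathbf{M}_{\beta}$ with $\sigma\le 3^{-n}$ absorbing the tail, and the $L^{\infty}$ bound~\eqref{est-step4.1} on $\tilde\pi$ killed by choosing $\sigma$ below a data-dependent constant while $\kappa=\kappa(\varepsilon)<\sigma$ carries the smallness. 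The only cosmetic deviation is that you propose an explicit interior/boundary dichotomy, whereas the paper works uniformly on $\Omega_{2R}=\Omega\cap B_{2R}$ since the Reifenberg flatness is already internalized in Lemma~\ref{lem-step4}.
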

\begin{proof} 
First, let us fix $\beta \in [0,n)$ and $\varepsilon \in (0,1)$. For the sake of readability, we will denote two measurable subsets of $\Omega$ on the left-hand side and right-hand side of~\eqref{ineq-LV} as $V_{1,\varepsilon}^{\lambda}$ and $V_{2}^{\lambda}$, respectively. More precisely, for any $\lambda>0$, one often writes
\begin{align}\notag 
 V_{1,\varepsilon}^{\lambda} := &\left\{\mathbf{M}_{\beta}\big(\omega^{p(\cdot)} |\nabla u|^{p(\cdot)}\big) > \lambda, \, \mathbf{M}_{\beta}\mathbb{F}_{\omega} \le \kappa \lambda\right\}, \quad V_{2}^{\lambda} := \left\{\mathbf{M}_{\beta}\big(\omega^{p(\cdot)} |\nabla u|^{p(\cdot)}\big) > \sigma \lambda\right\}.
\end{align}
It is worth noticing that due to the dependence of $\kappa$ on $\varepsilon$, the first level set also depends on $\varepsilon$, meanwhile, the remaining level set does not.  \\
It is remarkable that the conclusion obviously occurs for any empty subset $V_{1,\varepsilon}^{\lambda}$. Therefore, we are allowed to consider some $y \in V_{1,\varepsilon}^{\lambda}$, which implies $\mathbf{M}_{\beta}\mathbb{F}_{\omega}(y) \le \kappa \lambda$. By choosing $T_0 := 2\mathrm{diam}(\Omega)$ and combining to the global estimate~\eqref{glob-est}, it obtains
\begin{align}\label{def-V1-ineq}
\int_{\Omega} \omega^{p(x)} |\nabla u|^{p(x)} dx  \leq C\int_{B_{T_0}(y)} \mathbb{F}_{\omega}(x)dx  \leq  C|B_{T_0}(y)|  T_0^{-\beta}  \mathbf{M}_{\beta}\mathbb{F}_{\omega}(y) \le C T_0^{n-\beta}\kappa \lambda.
\end{align}
Regarding the fractional maximal operator $\mathbf{M}_{\beta}$, from Lemma~\ref{lem:bound-M-beta}, it satisfies a boundedness property. More precisely, there exists a constant $C=C(n,\beta)>0$ such that
\begin{align}\label{bound-M-al}
\sup_{\lambda>0} \lambda\left|\left\{x \in \mathbb{R}^n: \ \mathbf{M}_{\beta}{f}(x)>\lambda\right\}\right|^{1-\frac{\beta}{n}} \le C \|{f}\|_{L^1(\mathbb{R}^n)}, 
\end{align}
for all ${f} \in L^1(\mathbb{R}^n)$. At this stage, taking~\eqref{def-V1-ineq} into account and applying~\eqref{bound-M-al} with $f = \chi_{\Omega}\omega^{p(\cdot)} |\nabla u|^{p(\cdot)}$, it follows that
\begin{align}
\left|V_{1,\varepsilon}^{\lambda}\right| & \leq \left|\left\{\mathbf{M}_{\beta}\big(\omega^{p(\cdot)} |\nabla u|^{p(\cdot)}\big) > \lambda\right\}\right|  \leq \left(\frac{C}{\lambda}\int_{\Omega} \omega^{p(x)} |\nabla u|^{p(x)} dx  \right)^{\frac{n}{n-\beta}}  \leq CT_0^{n}\kappa^{\frac{n}{n-\beta}}  \le \varepsilon R_0^n, \label{step0}
\end{align}
from which, the last inequality in~\eqref{step0} holds by fixing a $R_0 \in (0,r_0]$ and $\kappa$ satisfying
\begin{align}\label{kappa-1}
0<\kappa \leq (C^{-1}\varepsilon)^{1-\frac{\beta}{n}} \left( R_0T_0^{-1}\right)^{n - \beta}. 
\end{align}

In a next step, for every $x_0\in \Omega$ and $0<\rho \le R_0$, we prove that 
\begin{align}\notag
\Omega_{\rho}(x_0) \not\subset V_{2}^{\lambda} \Longrightarrow \left|B_{\rho}(x_0)\cap V_{1,\varepsilon}^{\lambda}\right| < \varepsilon |B_{\rho}(x_0)|.
\end{align}
Being completely analogous to the preceding argument, we can assume that $B_{\rho}(x_0)\cap V_{1,\varepsilon}^{\lambda} \neq \emptyset$  and $\Omega_{\rho}(x_0) \not\subset V_{2}^{\lambda}$, which verifies the existence of some $x_1 \in B_{\rho}(x_0)$  and $x_2 \in \Omega_{\rho}(x_0)$ such that 
\begin{align}\label{def-x-1}
\sup_{r>0} r^{\beta}\fint_{B_r(x_1)} \mathbb{F}_{\omega}(x) dx \leq \kappa \lambda, \ \mbox{ and } \   \sup_{r>0} r^{\beta} \fint_{B_r(x_2)} \omega^{p(x)} |\nabla u|^{p(x)} dx \leq \sigma \lambda.
\end{align}
Having this at hand, one further needs to prove that
\begin{align}\label{lem-LV-ineq-1}
\left|B_{\rho}(x_0)\cap V_{1,\varepsilon}^{\lambda}\right| < \varepsilon |B_{\rho}(x_0)|.
\end{align}
For any $y \in B_{\rho}(x_0)\cap V_{1,\varepsilon}^{\lambda}$, we are able to decompose 
$$\mathbf{M}_{\beta}\big(\omega^{p(\cdot)} |\nabla u|^{p(\cdot)}\big)(y) = \max\left\lbrace \mathbf{M}^{\rho}_{\beta}\big(\omega^{p(\cdot)} |\nabla u|^{p(\cdot)}\big)(y),  \mathbf{T}^{\rho}_{\beta}\big(\omega^{p(\cdot)} |\nabla u|^{p(\cdot)}\big)(y)\right\rbrace,$$
where two cut-off maximal functions $\mathbf{M}^{\rho}$ and $\mathbf{T}^{\rho}$ are described in Definition~\ref{def:M_beta}. By~\eqref{def-x-1} and $B_r(y) \subset B_{3r}(x_2)$ for $r>\rho$, it ensures that
\begin{align*}
\mathbf{T}^{\rho}_{\beta}\big(\omega^{p(\cdot)} |\nabla u|^{p(\cdot)}\big)(y) & \leq 3^n \sup_{r > \rho} r^{\beta} \fint_{B_{3r}(x_2)} \omega^{p(x)} |\nabla u|^{p(x)} dx  \leq 3^n\sigma\lambda.
\end{align*}
Let us choose $\sigma \leq 3^{-n}$, which leads to $\mathbf{T}^{\rho}_{\beta}\big(\omega^{p(\cdot)} |\nabla u|^{p(\cdot)}\big)(y) \leq \lambda$, for all $y \in B_{\rho}(x_0)\cap V_{1,\varepsilon}^{\lambda}$. It allows us to conclude
\begin{align*}
B_{\rho}(x_0)\cap V_{1,\varepsilon}^{\lambda}  \subset B_{\rho}(x_0)\cap \left\{\mathbf{M}_{\beta}\big(\omega^{p(\cdot)} |\nabla u|^{p(\cdot)}\big) > \lambda\right\} = B_{\rho}(x_0)\cap \left\{\mathbf{M}_{\beta}^{\rho}\big(\omega^{p(\cdot)} |\nabla u|^{p(\cdot)}\big) > \lambda\right\}.
\end{align*}
One can readily verify that 
$$\mathbf{M}_{\beta}^{\rho}\big(\omega^{p(\cdot)} |\nabla u|^{p(\cdot)}\big) = \mathbf{M}_{\beta}^{\rho}\big(\chi_{B_{2\rho}(x_0)} \omega^{p(\cdot)} |\nabla u|^{p(\cdot)}\big)$$ 
in the ball $B_{\rho}(x_0)$. Consequently, we can conclude that
\begin{align}\label{max-oper-bounded}
B_{\rho}(x_0)\cap V_{1,\varepsilon}^{\lambda}  \subset B_{\rho}(x_0)\cap \left\{\mathbf{M}_{\beta}^{\rho}\big(\chi_{B_{2\rho}(x_0)} \omega^{p(\cdot)} |\nabla u|^{p(\cdot)}\big) > \lambda\right\}.
\end{align} 
At this step, we will use the comparison estimates presented in the previous section. Thanks to Lemma~\ref{lem-step1}, one can find $v \in W^{1,p(\cdot)}\left(\Omega_{16\rho}(x_0),\omega^{p(\cdot)}\right) \cap W^{1,p_2}\left(\Omega_{8\rho}(x_0),\omega^{p_2}\right)$ such that
\begin{align}\label{STEP-1}
\fint_{\Omega_{16\rho}(x_0)} \omega^{p(x)}  |\nabla u - \nabla v|^{p(x)} & dx \le \delta \fint_{\Omega_{16\rho}(x_0)} \omega^{p(x)}|\nabla u|^{p(x)} dx \notag \\
& + C_{\delta} \fint_{\Omega_{16\rho}(x_0)} \omega^{p(x)} \big(|\mathbf{F}|^{p(x)} + |\nabla \phi_1|^{p(x)} + |\nabla \phi_2|^{p(x)}\big) dx,
\end{align}
for every $\delta \in (0,1)$. Lemma~\ref{lem-step2} ensures the existence of $\tilde{v} \in v + W^{1,p_2}_{0}(\Omega_{8\rho}(x_0),\omega^{p_2})$ such that
\begin{align}\label{STEP-2}
\fint_{\Omega_{8\rho}(x_0)} & \omega^{p_2}|\nabla v - \nabla \tilde{v}|^{p_2} dx \le \left(\delta + C_{\delta} \kappa^{\frac{p_2}{p_2-1}}\right) \fint_{\Omega_{16\rho}(x_0)} \omega^{p(x)}|\nabla u|^{p(x)} dx \notag \\
& \qquad +   C_{\delta} \left[1 + \kappa^{\frac{p_2}{p_2-1}} \fint_{\Omega_{16\rho}(x_0)} \omega^{p(x)} \big(|\nabla \phi_1|^{p(x)} + |\nabla \phi_2|^{p(x)}\big) dx \right],
\end{align}
where $p_2 = \sup\limits_{x \in \Omega_{8\rho}(x_0)} p(x)$. Next, Lemma~\ref{lem-step3} gives us a function $\pi \in \tilde{v} + W^{1,p_2}_{0}(\Omega_{4\rho}(x_0),\omega_0^{p_2})$ such that
\begin{align}\label{STEP-3}
\fint_{\Omega_{4\rho}(x_0)}  &\big|\omega |\nabla\tilde{v}| - \omega_{0} |\nabla \pi|\big|^{p_2} dx  \le \left(\delta + C_{\delta} |\mathrm{log} \mathbb{W}|_{\mathrm{BMO}}\right) \fint_{\Omega_{16\rho}(x_0)} \omega^{p(x)}|\nabla u|^{p(x)} dx \notag \\
& \qquad +   C_{\delta} |\mathrm{log} \mathbb{W}|_{\mathrm{BMO}} \left[1 +  \fint_{\Omega_{16\rho}(x_0)} \omega^{p(x)} \big(|\nabla \phi_1|^{p(x)} + |\nabla \phi_2|^{p(x)}\big) dx \right],
\end{align}
provided $|\mathrm{log} \mathbb{W}|_{\mathrm{BMO}}$ small enough, where 
$$\mathbb{W}_0 = \langle \mathbb{W} \rangle_{B_{4\rho}(x_0)}^{\mathrm{log}}, \mbox{ and } \omega_0 = \langle \omega \rangle_{B_{4\rho}(x_0)}^{\mathrm{log}}.$$
According Lemma~\ref{lem-step4}, if given $\Omega$ satisfies the $(\kappa,r_0)$-Reifenberg flat condition, then there exists $\tilde{\pi} \in W^{1,p_2}_{0}(\Omega_{4\rho}(x_0),\omega_0^{p_2}) \cap L^{\infty}(\Omega_{2\rho}(x_0),\omega_0^{p_2})$ such that 
\begin{align}\label{STEP-41}
\sup\limits_{x \in \Omega_{2\rho}(x_0)} \omega_{0}^{p_2}|\nabla \tilde{\pi}|^{p_2} &\le  C  \fint_{\Omega_{16\rho}(x_0)} \omega^{p(x)} |\nabla u|^{p(x)} dx \notag \\
& \qquad + C\left(\fint_{\Omega_{16\rho}(x_0)} \omega^{p(x)} \big(|\nabla \phi_1|^{p(x)} +  |\nabla \phi_2|^{p(x)}\big) dx + 1\right),
\end{align}
and
\begin{align}\label{STEP-4}
\fint_{\Omega_{4\rho}(x_0)} \omega_{0}^{p_2}\big|\nabla \tilde{\pi} - \nabla \pi\big|^{p_2} dx & \le \delta  \fint_{\Omega_{16\rho}(x_0)} \omega^{p(x)} |\nabla u|^{p(x)} dx \notag \\
& \qquad + C\left(\fint_{\Omega_{16\rho}(x_0)} \omega^{p(x)} \big(|\nabla \phi_1|^{p(x)} +  |\nabla \phi_2|^{p(x)}\big) dx + 1\right).
\end{align}
On the other hand, for every $x \in B_{\rho}(x_0)$, it is easily to see that 
\begin{align*}
|\zeta|^{p(x)} \le 2^{p_{\mathrm{max}}-1}\left(|\zeta-\eta|^{p(x)} + |\eta|^{p(x)}\right)
\mbox{ and }
|\eta|^{p(x)} \le 2^{p_{\mathrm{max}}-1} \left(|\eta|^{p_2} + 1\right),
\end{align*}
for all $\zeta, \eta \in \mathbb{R}^n$. Taking the advantages of these estimates, we deduce that
\begin{align*}
\omega^{p(x)}|\nabla u|^{p(x)} & \leq C \left(  \omega^{p(x)}\left| \nabla u - \nabla v \right|^{p(x)} + \omega^{p_2} \left|\nabla v \right|^{p_2} + 1\right),
\end{align*}
and
\begin{align*}
\omega^{p_2} \left|\nabla v \right|^{p_2} & \le C \Big(\omega^{p_2}\left| \nabla v - \nabla \tilde{v} \right|^{p_2}  + \big| \omega|\nabla \tilde{v}| - \omega_{0}|\nabla \pi| \big|^{p_2}  +  \omega_{0}^{p_2} \big| \nabla \pi  - \nabla \tilde{\pi} \big|^{p_2}  +  \omega_{0}^{p_2} \big| \nabla \tilde{\pi} \big|^{p_2}\Big).
\end{align*}
Thus, we are able to conclude that 
\begin{align*}
\mathbf{M}^{\rho}_{\beta} \big( \chi_{B_{2\rho}(x_0)} \omega^{p(\cdot)}|\nabla u|^{p(\cdot)} \big) &\leq \ \mathbf{M}^{\rho}_{\beta} \big(C_1 \chi_{B_{2\rho}(x_0)} \omega^{p(\cdot)}|\nabla u - \nabla v|^{p(\cdot)} \big)\\ 
& \ \quad + \mathbf{M}^{\rho}_{\beta} \big(C_1 \chi_{B_{2\rho}(x_0)} \omega^{p_2}\left| \nabla v - \nabla \tilde{v} \right|^{p_2} \big) \\
& \ \quad \quad + \mathbf{M}^{\rho}_{\beta} \big(C_1 \chi_{B_{2\rho}(x_0)} \big|\omega|\nabla \tilde{v}| - \omega_{0}|\nabla \pi|\big|^{p_2} \big) \\
& \ \quad \quad \quad + \mathbf{M}^{\rho}_{\beta} \big(C_1 \chi_{B_{2\rho}(x_0)} \omega_{0}^{p_2}|\nabla \pi  - \nabla \tilde{\pi}|^{p_2} \big) \\
& \ \quad \quad \quad \quad + \mathbf{M}^{\rho}_{\beta} \big(C_1 \chi_{B_{2\rho}(x_0)} \big( \omega_{0}^{p_2}|\nabla \tilde{\pi}|^{p_2} + 1\big) \big), \ \mbox{ in } B_{\rho}(x_0).
\end{align*}
Combining with~\eqref{max-oper-bounded}, it yields
\begin{align}\notag
|B_{\rho}(x_0)\cap V_{1,\varepsilon}^{\lambda}|  &\le \left|B_{\rho}(x_0)\cap \left\{\mathbf{M}_{\beta}^{\rho}\big(\chi_{B_{2\rho}(x_0)} \omega^{p(\cdot)} |\nabla u|^{p(\cdot)}\big) > \lambda\right\}\right| \\
& \le \left|B_{\rho}(x_0)\cap \left\{\mathbf{M}_{\beta}^{\rho}\big(C_1\chi_{B_{2\rho}(x_0)} \omega^{p(\cdot)} |\nabla u - \nabla v|^{p(\cdot)}\big) > \frac{\lambda}{5}\right\}\right| \notag \\
& \hspace{.5cm} + \left|B_{\rho}(x_0)\cap \left\{\mathbf{M}_{\beta}^{\rho}\big(C_1 \chi_{B_{2\rho}(x_0)} \omega^{p_2}\left| \nabla v - \nabla \tilde{v} \right|^{p_2} \big) > \frac{\lambda}{5}\right\}\right| \notag \\
& \hspace{1cm}  + \left|B_{\rho}(x_0)\cap \left\{\mathbf{M}_{\beta}^{\rho}\big(C_1 \chi_{B_{2\rho}(x_0)} \big|\omega|\nabla \tilde{v}| - \omega_{0}|\nabla \pi|\big|^{p_2} \big) > \frac{\lambda}{5}\right\}\right| \notag \\
& \hspace{1.5cm} + \left|B_{\rho}(x_0)\cap \left\{\mathbf{M}_{\beta}^{\rho}\big(C_1 \chi_{B_{2\rho}(x_0)} \omega_{0}^{p_2}|\nabla \pi  - \nabla \tilde{\pi}|^{p_2} \big) > \frac{\lambda}{5}\right\}\right| \notag \\
& \hspace{2cm} + \left|B_{\rho}(x_0)\cap \left\{\mathbf{M}_{\beta}^{\rho}\big(C_1 \chi_{B_{2\rho}(x_0)} \big( \omega_{0}^{p_2}|\nabla \tilde{\pi}|^{p_2} + 1\big) \big) > \frac{\lambda}{5}\right\}\right| \notag \\
& \le \mathcal{N}_1 + \mathcal{N}_2 + \mathcal{N}_3 + \mathcal{N}_4 + \mathcal{N}_5,  \label{N-sum}
\end{align} 
where we define
\begin{align*}
\mathcal{N}_1&:= \left|B_{\rho}(x_0)\cap \left\{\mathbf{M}_{\beta}^{\rho}\big(C_1\chi_{B_{2\rho}(x_0)} \omega^{p(\cdot)} |\nabla u - \nabla v|^{p(\cdot)}\big) > \frac{\lambda}{5}\right\}\right|; \\
\mathcal{N}_2&:=\left|B_{\rho}(x_0)\cap \left\{\mathbf{M}_{\beta}^{\rho}\big(C_1 \chi_{B_{2\rho}(x_0)} \omega^{p_2}\left| \nabla v - \nabla \tilde{v} \right|^{p_2} \big) > \frac{\lambda}{5}\right\}\right|; \\
\mathcal{N}_3&:=\left|B_{\rho}(x_0)\cap \left\{\mathbf{M}_{\beta}^{\rho}\big(C_1 \chi_{B_{2\rho}(x_0)} \big|\omega|\nabla \tilde{v}| - \omega_{0}|\nabla \pi|\big|^{p_2} \big) > \frac{\lambda}{5}\right\}\right|; \\
\mathcal{N}_4&:=\left|B_{\rho}(x_0)\cap \left\{\mathbf{M}_{\beta}^{\rho}\big(C_1 \chi_{B_{2\rho}(x_0)} \omega_{0}^{p_2}|\nabla \pi  - \nabla \tilde{\pi}|^{p_2} \big) > \frac{\lambda}{5}\right\}\right|; \\
\mathcal{N}_5&:=\left|B_{\rho}(x_0)\cap \left\{\mathbf{M}_{\beta}^{\rho}\big(C_1 \chi_{B_{2\rho}(x_0)} \big( \omega_{0}^{p_2}|\nabla \tilde{\pi}|^{p_2} + 1\big) \big) > \frac{\lambda}{5}\right\}\right|. 
\end{align*}
To estimate the first four terms $\mathcal{N}_i$ for $i=1,2,3,4$, let us first use the boundedness property of the maximal operator $\mathbf{M}_{\beta}$ in~\eqref{bound-M-al}, it  yields that
\begin{align}
\sum_{j=1}^{4}\mathcal{N}_j & \le C \left(\lambda^{-1} \int_{\Omega_{2\rho}(x_0)} \omega^{p(x)} |\nabla u - \nabla v|^{p(x)} dx \right)^{\frac{n}{n-\beta}} \notag \\
& \hspace{0.5cm} + C \left(\lambda^{-1} \int_{\Omega_{2\rho}(x_0)} \omega^{p_2}\left| \nabla v - \nabla \tilde{v} \right|^{p_2} dx \right)^{\frac{n}{n-\beta}} \notag \\
& \hspace{1cm} + C \left(\lambda^{-1} \int_{\Omega_{2\rho}(x_0)} \big|\omega|\nabla \tilde{v}| - \omega_{0}|\nabla \pi|\big|^{p_2} dx \right)^{\frac{n}{n-\beta}} \notag \\
& \hspace{1.5cm} + C \left(\lambda^{-1} \int_{\Omega_{2\rho}(x_0)} \omega_{0}^{p_2}|\nabla \pi  - \nabla \tilde{\pi}|^{p_2} dx \right)^{\frac{n}{n-\beta}}. \label{N-sum-14}
\end{align}
Here, note that the first integral term of~\eqref{N-sum-14} can be evaluated by combining~\eqref{STEP-1} and~\eqref{def-x-1}. Indeed, since $\Omega_{16\rho}(x_0) \subset \Omega_{17\rho}(x_2)$, from~\eqref{def-x-1}, it concludes that
\begin{align}\notag
\fint_{\Omega_{16\rho}(x_0)} \omega^{p(x)}  |\nabla u|^{p(x)} dx \le C \fint_{\Omega_{17\rho}(x_2)} \omega^{p(x)}  |\nabla u|^{p(x)} dx \le C\rho^{-\beta} \sigma \lambda.
\end{align}
The argument is somewhat similar that leads to
\begin{align}\notag
\fint_{\Omega_{16\rho}(x_0)} \omega^{p(x)} \big(|\mathbf{F}|^{p(x)} + |\nabla \phi_1|^{p(x)} + |\nabla \phi_2|^{p(x)}\big) dx \le C\rho^{-\beta} \kappa \lambda.
\end{align}
Combining these above estimates with~\eqref{STEP-1}, one obtains 
\begin{align}\notag
\int_{\Omega_{2\rho}(x_0)} \omega^{p(x)}  |\nabla u - \nabla v|^{p(x)} dx \le C\int_{\Omega_{16\rho}(x_0)} \omega^{p(x)}  |\nabla u - \nabla v|^{p(x)} dx \le C \rho^{n-\beta} \lambda \left(\delta \sigma + C_{\delta} \kappa\right),
\end{align}
which directly implies
\begin{align}\label{STEP-1B}
\left(\lambda^{-1}\int_{\Omega_{2\rho}(x_0)} \omega^{p(x)}  |\nabla u - \nabla v|^{p(x)}  dx\right)^{\frac{n}{n-\beta}} \le C \rho^{n} \big(\delta \sigma + C_{\delta} \kappa\big)^{\frac{n}{n-\beta}},
\end{align}
Again, the siminar argument allows us to infer from~\eqref{STEP-2} that
\begin{align}\notag
\int_{\Omega_{2\rho}(x_0)} \omega^{p_2}\left| \nabla v - \nabla \tilde{v} \right|^{p_2} dx  \le C  \rho^n\left[\left(\delta + C_{\delta} \kappa^{\frac{p_2}{p_2-1}}\right) \rho^{-\beta}\sigma \lambda + C_{\delta} \left(1 + \kappa^{\frac{p_2}{p_2-1}} \rho^{-\beta} \kappa \lambda\right)\right].
\end{align}
For the term appearing on the right-hand side of this estimate,  we restrict $\lambda$ such that
\begin{align}\label{lamb-0}
\lambda \ge \lambda_0: = \kappa^{-1}r_0^{\beta} \ge \kappa^{-1}\rho^{\beta},
\end{align}
which guarantees that $1 \le \rho^{-\beta}\kappa \lambda$. Therefore, it enables us to obtain
\begin{align}\label{STEP-2B}
\left(\lambda^{-1} \int_{\Omega_{2\rho}(x_0)} \omega^{p_2}\left| \nabla v - \nabla \tilde{v} \right|^{p_2} dx \right)^{\frac{n}{n-\beta}} \le C \rho^{n} \left[\left(\delta + C_{\delta} \kappa^{\frac{p_2}{p_2-1}}\right) \sigma + C_{\delta} \left(\kappa + \kappa^{\frac{p_2}{p_2-1}} \kappa\right)\right]^{\frac{n}{n-\beta}}.
\end{align}
Next, the remaining two terms on the right-hand side of~\eqref{N-sum} are treated via the comparison estimates presented in~\eqref{STEP-3} and~\eqref{STEP-4}. In such a way, making use of conditions~\eqref{lamb-0} and $|\mathrm{log} \mathbb{W}|_{\mathrm{BMO}} \le \kappa$, it yields
\begin{align}\label{STEP-3B}
\left(\lambda^{-1} \int_{\Omega_{2\rho}(x_0)} \big|\omega|\nabla \tilde{v}| - \omega_{0}|\nabla \pi|\big|^{p_2} dx \right)^{\frac{n}{n-\beta}} &\le C \rho^{n} \big[\left(\delta + C_{\delta} |\mathrm{log} \mathbb{W}|_{\mathrm{BMO}}\right) \sigma + C_{\delta} \kappa |\mathrm{log} \mathbb{W}|_{\mathrm{BMO}}\big]^{\frac{n}{n-\beta}} \notag \\
& \le C \rho^{n} \big[\left(\delta + C_{\delta} \kappa\right) \sigma + C_{\delta} \kappa^2\big]^{\frac{n}{n-\beta}},
\end{align}
and
\begin{align}\label{STEP-4B}
\left(\lambda^{-1} \int_{\Omega_{2\rho}(x_0)} \omega_{0}^{p_2}|\nabla \pi  - \nabla \tilde{\pi}|^{p_2} dx \right)^{\frac{n}{n-\beta}} \le C \rho^{n} \left(\delta\sigma + \kappa\right)^{\frac{n}{n-\beta}}.
\end{align}
Gathering all estimates~\eqref{STEP-1B},~\eqref{STEP-2B},~\eqref{STEP-3B},~\eqref{STEP-4B}, and then reabsorbing them into~\eqref{N-sum-14}, we arrive at
\begin{align}\notag
\sum_{j=1}^{4}\mathcal{N}_j & \le C \rho^n \left[\delta \sigma + C_{\delta} \kappa + \big(\delta + C_{\delta} \kappa^{\frac{p_2}{p_2-1}}\big) \sigma + C_{\delta} \kappa \big(1 + \kappa^{\frac{p_2}{p_2-1}}\big) + \left(\delta + C_{\delta} \kappa\right) \sigma + C_{\delta} \kappa^2 + \delta\sigma + \kappa\right]^{\frac{n}{n-\beta}}.
\end{align}
Keep in mind that $\sigma,\kappa \in (0,1)$, it allows us to conclude
\begin{align}\label{N-14-final}
\sum_{j=1}^{4}\mathcal{N}_j & \le C \rho^n \big(\delta  + C_{\delta} \kappa\big)^{\frac{n}{n-\beta}} \le C_2 \big(\delta  + C_{\delta} \kappa\big)^{\frac{n}{n-\beta}} |B_{\rho}(x_0)|.
\end{align}
At this point, we turn our attention to estimating the last term, $\mathcal{N}_5$. It is worth noting that this term will vanish for $\sigma>0$ sufficiently small and $\lambda>0$ large. Indeed, for every $y \in B_{\rho}(x_0)$, from~\eqref{STEP-41}, one has
\begin{align}\notag 
\mathbf{M}^{\rho}_{\beta}\big(C \chi_{B_{2\rho}(x_0)} \big( \omega_{0}^{p_2}|\nabla \tilde{\pi}|^{p_2} + 1\big) \big)(y) & = C\sup_{r \leq \rho} r^{\beta} \fint_{B_{2\rho}(x_0)}  \big( \omega_{0}^{p_2}|\nabla \tilde{\pi}|^{p_2} + 1\big)  dx \notag \\
& \le C \rho^{\beta}  \fint_{\Omega_{16\rho}(x_0)} \omega^{p(x)} |\nabla u|^{p(x)} dx \notag \\
& \qquad + C \rho^{\beta} \left(\fint_{\Omega_{16\rho}(x_0)} \omega^{p(x)} \big(|\nabla \phi_1|^{p(x)} +  |\nabla \phi_2|^{p(x)}\big) dx + 1\right) \notag \\
& \le C_3 \left(\kappa\lambda + \sigma\lambda + \rho^{\beta}\right). \label{N-5}
\end{align}
Therefore, for all $\lambda \ge \lambda_0$ given in~\eqref{lamb-0}, we may choose suitable $\kappa$ and $\sigma$ satisfying
$$\kappa \le \sigma \le \min\left\{\frac{1}{3^n}, \frac{1}{15C_3}\right\}.$$ 
Consequently, making use of~\eqref{N-5}, we obtain that 
\begin{align}\notag
\mathbf{M}^{\rho}_{\beta}\big(C \chi_{B_{2\rho}(x_0)} \big( \omega_{0}^{p_2}|\nabla \tilde{\pi}|^{p_2} + 1\big) \big)(y) \le \frac{\lambda}{5}.
\end{align}
In other words, one can conclude that $\mathcal{N}_5 = 0$ for all $\lambda \ge \lambda_0$ and $\sigma$ small enough. We next insert this estimate into~\eqref{N-sum} and~\eqref{N-14-final} to get
\begin{align}\label{Final-est}
|B_{\rho}(x_0)\cap V_{1,\varepsilon}^{\lambda}|  &\le C_2 \big(\delta  + C_{\delta} \kappa\big)^{\frac{n}{n-\beta}} |B_{\rho}(x_0)|.
\end{align}
At this step, it is possible to choose $\delta$ and $\kappa$ sufficiently small in~\eqref{Final-est} such that
\begin{align}\label{kappa-2}
C_2 \big(\delta  + C_{\delta} \kappa\big)^{\frac{n}{n-\beta}} < \varepsilon,
\end{align}
which allows us to get~\eqref{lem-LV-ineq-1} from~\eqref{Final-est}. With the constraints that $\kappa$ satisfies~\eqref{kappa-1} and~\eqref{kappa-2}, $\lambda \ge \lambda_0$, and $\sigma$ is a given small constant at hand, we are going to prove the following two statements
\begin{itemize}
\item[$(i)$] $\left|V_{1,\varepsilon}^{\lambda}\right| \le \varepsilon R_0^n$; and
\item[$(ii)$] for every $x_0\in \Omega$ and $\rho \in (0, R_0]$, we claim that $\Omega_{\rho}(x_0) \not\subset V_{2}^{\lambda} \Longrightarrow \left|B_{\rho}(x_0)\cap V_{1,\varepsilon}^{\lambda}\right| < \varepsilon |B_{\rho}(x_0)|$.
\end{itemize}
By the reasoning from above and in virtue of Lemma~\ref{lem:Vitali}, we conclude~\eqref{ineq-LV}, and we have finally finished the proof of Theorem~\ref{theo-LV}.
\end{proof}

\section{Statements and proofs of main results}
\label{sec:main}

In this section, we provide the formal statements of the main results with their full generality, via Theorem~\ref{theo-Ge1} and~\ref{theo-Ge2}. With all the preceding results at hand, the proofs of the main theorems are also accomplished in two parts. Here, the level-set estimate proved in Theorem~\ref{theo-LV} is employed to conclude gradient regularity for solutions to the two-ostacle-problem~\ref{OP-var} to a larger class of functional settings. For the sake of exposition, to begin with, we shall restrict ourselves to analyzing results in Lebesgue spaces, as the most fundamental example of function spaces.  

We now state the fundamental assumptions underlying the two main theorems that follow. We suppose that $\mathbb{W}: \Omega \to \mathbb{R}^{n\times n}_{\mathrm{sym}^+}$ is a matrix weight satisfying~\eqref{Pi-2} and $p(\cdot)$ is a continuous function satisfying~\eqref{cond:pq1}. Let $\mathbf{F} \in L^{p(\cdot)}\big(\Omega,\omega^{p(\cdot)}\big)$ be a given data and $\phi_1, \phi_2 \in W^{1,p(\cdot)}\big(\Omega,\omega^{p(\cdot)}\big)$ be two given obstacles as in~\eqref{def-phi-12}. Assume further that $u \in \mathbb{K}_{\phi_1,\phi_2}^\omega$ is a solution to~\eqref{OP-var} and $\mathbb{F}_{\omega}$ is defined as in~\eqref{def-UF}.

\begin{theorem}
\label{theo-Ge1}
For every $\beta \in [0,n)$ and $\gamma>1$, there exists a constant $\kappa \in (0,1)$ such that if the assumptions $\mathbf{(H_1)}$-$\mathbf{(H_3)}$ satisfy for some $r_0>0$, then the following fractional gradient estimate holds
\begin{align}\label{ineq-Ge1}
\left\|\mathbf{M}_{\beta}\big(\omega^{p(\cdot)} |\nabla u|^{p(\cdot)}\big)\right\|_{L^{\gamma}(\Omega)} & \le C \left[ 1 +  \left\|\mathbf{M}_{\beta}\mathbb{F}_{\omega}\right\|_{L^{\gamma}(\Omega)}\right].
\end{align}
Here, the constant $C$ depends on $\textsc{dataset}$ and $\gamma$.
\end{theorem}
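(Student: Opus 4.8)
The plan is to deduce the distributional estimate~\eqref{ineq-Ge1} from the level-set inequality~\eqref{ineq-LV} of Theorem~\ref{theo-LV} by a standard integration over $\lambda$, combined with the boundedness of $\mathbf{M}_\beta$ already recorded in Proposition~\ref{lem:bound-M-beta}. First I would fix $\beta\in[0,n)$ and $\gamma>1$, and set $g:=\mathbf{M}_\beta\big(\omega^{p(\cdot)}|\nabla u|^{p(\cdot)}\big)$, $h:=\mathbf{M}_\beta\mathbb{F}_\omega$. The starting point is to choose $\varepsilon\in(0,1)$ small (to be fixed at the end in terms of $\gamma$, $n$, $\sigma$ and the constant $C$ from~\eqref{ineq-LV}), which in turn fixes $\sigma\in(0,3^{-n})$, $\kappa=\kappa(\varepsilon)$ and $\lambda_0=\lambda_0(\kappa)$ as in Theorem~\ref{theo-LV}. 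Then~\eqref{ineq-LV} reads
\begin{align}\notag
\big|\{g>\lambda,\ h\le\kappa\lambda\}\big|\le C\varepsilon\,\big|\{g>\sigma\lambda\}\big|,\qquad \lambda\ge\lambda_0.
\end{align}
The elementary inclusion $\{g>\lambda\}\subset\{g>\lambda,\,h\le\kappa\lambda\}\cup\{h>\kappa\lambda\}$ yields, for $\lambda\ge\lambda_0$,
\begin{align}\notag
\big|\{g>\lambda\}\big|\le C\varepsilon\,\big|\{g>\sigma\lambda\}\big|+\big|\{h>\kappa\lambda\}\big|.
\end{align}

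Next I would multiply this inequality by $\gamma\lambda^{\gamma-1}$ and integrate over $\lambda\in(\lambda_0,\infty)$, using the layer-cake formula $\int_\Omega |f|^\gamma\,dx=\gamma\int_0^\infty\lambda^{\gamma-1}|\{|f|>\lambda\}|\,d\lambda$. A change of variables $\lambda\mapsto\lambda/\sigma$ in the middle term produces a factor $\sigma^{-\gamma}$, and $\lambda\mapsto\lambda/\kappa$ in the last term a factor $\kappa^{-\gamma}$; one must be slightly careful with the lower limits, but the portion of the integrals over $(0,\lambda_0)$ is controlled by $C\lambda_0^\gamma|\Omega|$, which is a finite constant depending on $\textsc{dataset}$ and $\gamma$ (here I would invoke the a priori finiteness of $\int_\Omega g^\gamma\,dx$, justified by first proving the estimate with $\mathbf{M}_\beta$ replaced by the truncation $\mathbf{M}_\beta^R$, or by a routine approximation so that the quantity being bounded is known to be finite before absorption). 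This gives
\begin{align}\notag
\int_\Omega g^\gamma\,dx\le C\varepsilon\,\sigma^{-\gamma}\int_\Omega g^\gamma\,dx+C\kappa^{-\gamma}\int_\Omega h^\gamma\,dx+C\lambda_0^\gamma|\Omega|.
\end{align}
Now I would choose $\varepsilon$ small enough that $C\varepsilon\sigma^{-\gamma}\le\tfrac12$; note this is legitimate because $\sigma$ and $C$ depend only on $\textsc{dataset}$ (in particular not on $\varepsilon$), so the choice $\varepsilon=\varepsilon(\textsc{dataset},\gamma)$ is self-consistent. Absorbing the first term on the left, and recalling $\lambda_0=\kappa^{-1}r_0^\beta$ with $\kappa=\kappa(\varepsilon)$ now fixed, yields $\int_\Omega g^\gamma\,dx\le C\big(1+\int_\Omega h^\gamma\,dx\big)$, which is exactly~\eqref{ineq-Ge1} after taking $\gamma$-th roots and adjusting constants.

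The main obstacle I anticipate is the \emph{a priori finiteness} issue: the absorption argument is only valid once we know $\|g\|_{L^\gamma(\Omega)}<\infty$, and this is not immediate since $\nabla u$ is only known to lie in $L^{p(\cdot)}(\Omega,\omega^{p(\cdot)})$, whereas $g^\gamma$ involves $\gamma$-th powers of a maximal function of $\omega^{p(\cdot)}|\nabla u|^{p(\cdot)}$. The clean way around this is to run the whole argument with the truncated operator $\mathbf{M}_\beta^R$ in place of $\mathbf{M}_\beta$ (Theorem~\ref{theo-LV} is stated with radii bounded by $R_0\le r_0$, so the level-set inequality localizes to $\mathbf{M}_\beta^{R_0}$ without change), observe that $\mathbf{M}_\beta^{R}\big(\omega^{p(\cdot)}|\nabla u|^{p(\cdot)}\big)\le R^{\beta}\,\mathbf{M}\big(\omega^{p(\cdot)}|\nabla u|^{p(\cdot)}\big)$ is finite a.e.\ and in fact in $L^\gamma$ by the strong $(q,q)$ bound for $\mathbf{M}$ on the higher-integrability gain furnished by Lemma~\ref{lem:well-known} (or simply by a crude $L^\infty$-in-$\lambda$ truncation of $g$ at level $N$, proving the bound uniformly in $N$ and letting $N\to\infty$ by monotone convergence). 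Once the estimate is established for $\mathbf{M}_\beta^{R_0}$ with a constant independent of the truncation, the passage to the full $\mathbf{M}_\beta$ follows by splitting $\mathbf{M}_\beta=\max\{\mathbf{M}_\beta^{R_0},\mathbf{T}_\beta^{R_0}\}$ and noting, as in the proof of Theorem~\ref{theo-LV} (cf.\ the bound~\eqref{def-V1-ineq}), that $\mathbf{T}_\beta^{R_0}\big(\omega^{p(\cdot)}|\nabla u|^{p(\cdot)}\big)$ is pointwise dominated by $C\,R_0^{\beta-n}\int_\Omega\omega^{p(x)}|\nabla u|^{p(x)}\,dx$, which by~\eqref{glob-est} is controlled by $C(1+\int_\Omega\mathbb{F}_\omega\,dx)\le C(1+\|\mathbf{M}_\beta\mathbb{F}_\omega\|_{L^\gamma(\Omega)})$ up to the finite measure of $\Omega$. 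Collecting the two contributions gives~\eqref{ineq-Ge1}.
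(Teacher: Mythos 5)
Your proof follows exactly the route the paper takes: decompose the level set $\{g>\lambda\}$ into a ``good'' part controlled by Theorem~\ref{theo-LV} and a ``data'' part $\{h>\kappa\lambda\}$, integrate $\gamma\lambda^{\gamma-1}$ against the resulting inequality over $(\lambda_0,\infty)$, bound the $(0,\lambda_0)$ contribution by a constant involving $|\Omega|$ and $\lambda_0$, and absorb the small multiple of $\|g\|_{L^\gamma}$ by fixing $\varepsilon$ depending only on $\textsc{dataset}$ and $\gamma$ (correctly noting that $\sigma$ is $\varepsilon$-independent, so the choice is not circular). The one thing you add that the paper does not mention is the a priori finiteness needed to legitimize the absorption step; of your two suggested fixes, the truncation $g_N=\min(g,N)$ followed by monotone convergence is the one that works cleanly, since the level-set inequality~\eqref{ineq-LV} for $\mathbf{M}_\beta$ does not automatically transfer to $\mathbf{M}_\beta^{R_0}$ (the condition $\mathbf{M}_\beta\mathbb{F}_\omega\le\kappa\lambda$ is stronger, not weaker, than its truncated version), and the ``higher integrability via Lemma~\ref{lem:well-known}'' route is not really available since that lemma controls $\omega$ rather than $\nabla u$.
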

\begin{proof}
The proof of the theorem is closely intertwined with the conclusion of Theorem~\ref{theo-LV}, which provides us with the key ingredient in the argument. In particular, for every $\varepsilon \in (0,1)$, it claims the existence of $\sigma \in (0,3^{-n})$ and $\kappa_{\varepsilon} \in (0,\sigma)$ such that if $\mathbf{(H_1)}$-$\mathbf{(H_3)}$ hold for $\kappa_{\varepsilon}$ and for some $r_0>0$, the following level-set inequality holds true
\begin{align}\label{Ge1-est1}
\left|\left\{\mathbf{M}_{\beta}\big(\omega^{p(\cdot)} |\nabla u|^{p(\cdot)}\big) > \lambda, \, \kappa_{\varepsilon}^{-1}\mathbf{M}_{\beta}\mathbb{F}_{\omega} \le  \lambda\right\}\right| \le C \varepsilon \left|\left\{\sigma^{-1}\mathbf{M}_{\beta}\big(\omega^{p(\cdot)} |\nabla u|^{p(\cdot)}\big) > \lambda\right\}\right|,
\end{align}
for all $\lambda \ge \lambda_0 := \kappa_{\varepsilon}^{-1}r_0^{\beta}$, where the positive constant $C$ depends on $\textsc{dataset}$. Moreover, it also enables us to decompose
\begin{align}
\left\{\mathbf{M}_{\beta}\big(\omega^{p(\cdot)} |\nabla u|^{p(\cdot)}\big) > \lambda\right\} & = \left\{\mathbf{M}_{\beta}\big(\omega^{p(\cdot)} |\nabla u|^{p(\cdot)}\big) > \lambda; \, \kappa_{\varepsilon}^{-1}\mathbf{M}_{\beta}\mathbb{F}_{\omega} \le  \lambda\right\} \notag \\
& \qquad \qquad \cup \left\{\mathbf{M}_{\beta}\big(\omega^{p(\cdot)} |\nabla u|^{p(\cdot)}\big) > \lambda, \, \kappa_{\varepsilon}^{-1}\mathbf{M}_{\beta}\mathbb{F}_{\omega} > \lambda\right\},\notag
\end{align}
which leads to
\begin{align}
\left|\left\{\mathbf{M}_{\beta}\big(\omega^{p(\cdot)} |\nabla u|^{p(\cdot)}\big) > \lambda\right\}\right| & \le \left|\left\{\mathbf{M}_{\beta}\big(\omega^{p(\cdot)} |\nabla u|^{p(\cdot)}\big) > \lambda, \, \kappa_{\varepsilon}^{-1}\mathbf{M}_{\beta}\mathbb{F}_{\omega} \le \lambda\right\}\right| \notag \\
& \qquad \qquad + \left| \left\{\mathbf{M}_{\beta}\big(\omega^{p(\cdot)} |\nabla u|^{p(\cdot)}\big) > \lambda, \, \kappa_{\varepsilon}^{-1}\mathbf{M}_{\beta}\mathbb{F}_{\omega} > \lambda\right\}\right|. \notag 
\end{align}
Assembling this inequality into~\eqref{Ge1-est1} allows us to conclude
\begin{align}
\left|\left\{\mathbf{M}_{\beta}\big(\omega^{p(\cdot)} |\nabla u|^{p(\cdot)}\big) > \lambda\right\}\right| & \le  C \varepsilon \left|\left\{\sigma^{-1}\mathbf{M}_{\beta}\big(\omega^{p(\cdot)} |\nabla u|^{p(\cdot)}\big) > \lambda\right\}\right| + \left| \left\{\kappa_{\varepsilon}^{-1}\mathbf{M}_{\beta}\mathbb{F}_{\omega} > \lambda\right\}\right|,\label{Ge1-est2}
\end{align}
for every $\lambda \ge \lambda_0$. With given $\gamma>1$, it follows from~\eqref{Ge1-est2} that
\begin{align}
& \left(\int_{\lambda_0}^{\infty}\gamma \lambda^{\gamma-1}\left|\left\{\mathbf{M}_{\beta}\big(\omega^{p(\cdot)} |\nabla u|^{p(\cdot)}\big) > \lambda\right\}\right| d\lambda\right)^{\frac{1}{\gamma}} \notag \\
& \hspace{4cm} \le  C \varepsilon^{\frac{1}{\gamma}} \left(\int_{\lambda_0}^{\infty}\gamma \lambda^{\gamma-1}\left|\left\{\sigma^{-1} \mathbf{M}_{\beta}\big(\omega^{p(\cdot)} |\nabla u|^{p(\cdot)}\big) > \lambda\right\}\right| d\lambda\right)^{\frac{1}{\gamma}} \notag \\
& \hspace{7cm} + C  \left(\int_{\lambda_0}^{\infty}\gamma \lambda^{\gamma-1}\left| \left\{\kappa_{\varepsilon}^{-1}\mathbf{M}_{\beta}\mathbb{F}_{\omega} >  \lambda\right\}\right|d\lambda\right)^{\frac{1}{\gamma}}.\label{Ge1-est3}
\end{align}
On the other hand, for $f \in L^{\gamma}(\Omega)$, it is known that the Lebesgue norm can also be represented as
\begin{align*}
\|f\|_{L^{\gamma}(\Omega)} & = \left(\int_0^{\infty} \gamma \lambda^{\gamma-1} |\{|f|>\lambda\}|  d\lambda \right)^{\frac{1}{\gamma}}.
\end{align*}
Therefore, from~\eqref{Ge1-est3}, one obtains that
\begin{align}
\left(\int_{\lambda_0}^{\infty}\gamma \lambda^{\gamma-1}\left|\left\{\mathbf{M}_{\beta}\big(\omega^{p(\cdot)} |\nabla u|^{p(\cdot)}\big) > \lambda\right\}\right| d\lambda\right)^{\frac{1}{\gamma}} & \le  C \sigma^{-1} \varepsilon^{\frac{1}{\gamma}} \left\|\mathbf{M}_{\beta}\big(\omega^{p(\cdot)} |\nabla u|^{p(\cdot)}\big)\right\|_{L^{\gamma}(\Omega)} \notag \\
& \qquad \qquad + C \kappa_{\varepsilon}^{-1} \left\|\mathbf{M}_{\beta}\mathbb{F}_{\omega}\right\|_{L^{\gamma}(\Omega)}.\label{Ge1-est4}
\end{align}
At this stage, let us apply the following estimate 
\begin{align*}
\|f\|_{L^{\gamma}(\Omega)} &  \le  C \left(\int_{\lambda_0}^{\infty} \gamma \lambda^{\gamma-1} |\{|f|>\lambda\}| d\lambda \right)^{\frac{1}{\gamma}} + C\left(\int_0^{\lambda_0} \gamma \lambda^{\gamma-1} |\{|f|>\lambda\}| d\lambda \right)^{\frac{1}{\gamma}} \\
& \le C \left(\int_{\lambda_0}^{\infty} \gamma \lambda^{\gamma-1} |\{|f|>\lambda\}| d\lambda\right)^{\frac{1}{\gamma}} + C|\Omega|^{\frac{1}{\gamma}} \lambda_0^{\gamma},
\end{align*}
and combined with~\eqref{Ge1-est4}, it yields
\begin{align}
\left\|\mathbf{M}_{\beta}\big(\omega^{p(\cdot)} |\nabla u|^{p(\cdot)}\big)\right\|_{L^{\gamma}(\Omega)} & \le C \sigma^{-1} \varepsilon^{\frac{1}{\gamma}} \left\|\mathbf{M}_{\beta}\big(\omega^{p(\cdot)} |\nabla u|^{p(\cdot)}\big)\right\|_{L^{\gamma}(\Omega)} \notag \\
& \qquad \qquad + C \kappa_{\varepsilon}^{-1} \left\|\mathbf{M}_{\beta}\mathbb{F}_{\omega}\right\|_{L^{\gamma}(\Omega)} + C|\Omega|^{\frac{1}{\gamma}} \lambda_0^{\gamma}.\label{Ge1-est5}
\end{align}
By choosing $\varepsilon$ sufficiently small, the second term on the right-hand side of~\eqref{Ge1-est5} can be absorbed into the left-hand side, and the first term reduces to a constant. We then obtain~\eqref{ineq-Ge1}, which finishes the proof.
\end{proof}

As the reader will see, the proof idea behind Theorem~\ref{theo-Ge1} can be readily extended to a broader class of function spaces. In the next result, we will establish gradient estimates in Orlicz spaces associated with Muckenhoupt weights. Further, it is worth emphasizing that our main results complement and extend the global regularity theory to several classes of function spaces (for example, generalized Lorentz spaces, Orlicz-Lorentz spaces, or even generalized Morrey spaces). For those interested, a more detailed treatment of some simpler problems can be found in our recent works.

We are now in the position to state and proof the second main result of this paper. 

\begin{theorem}\label{theo-Ge2}
Let $\mu \in \mathcal{A}_\infty$ be a given Muckenhoupt weight and a map $\Phi: [0,\infty) \to [0,\infty)$ be an increasing continuously differentiable function with $\Phi(0)=0$ satisfying
\begin{align}\label{Phi-cond}
\Phi(2\lambda) \le c_0 \Phi(\lambda), \quad \lambda\ge 0,
\end{align}
for some constant $c_0>0$. Then, under the structure assumptions of Theorem~\ref{theo-Ge1}, there exist $\kappa \in (0,1)$ and $C = C(c_0,[\mu]_{\mathcal{A}_{\infty}},\textsc{dataset})>0$ such that the following fractional gradient estimate holds true:
\begin{align}\label{ineq-Ge2}
\int_{\Omega}\Phi\left(\mathbf{M}_{\beta}\big(\omega^{p(\cdot)} |\nabla u|^{p(\cdot)}\big)\right)d\mu & \le C \left[ 1 +  \int_{\Omega}\Phi\left(\mathbf{M}_{\beta}\mathbb{F}_{\omega}\right)d\mu\right],
\end{align}
if the assumptions $\mathbf{(H_1)}$-$\mathbf{(H_3)}$ satisfy for some $r_0>0$. 
\end{theorem}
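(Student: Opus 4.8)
The plan is to derive \eqref{ineq-Ge2} from the level-set inequality \eqref{ineq-LV} of Theorem~\ref{theo-LV}, adapting the argument used for Theorem~\ref{theo-Ge1} to the weighted Orlicz framework. First I would fix $\beta \in [0,n)$ and, for a parameter $\varepsilon \in (0,1)$ to be chosen later, invoke Theorem~\ref{theo-LV} to obtain $\sigma \in (0,3^{-n})$, $\kappa_\varepsilon \in (0,\sigma)$ and $\lambda_0 = \lambda_0(\kappa_\varepsilon)$ such that, under $\mathbf{(H_1)}$--$\mathbf{(H_3)}$ with this $\kappa_\varepsilon$, one has
\begin{align}\label{Ge2-est1}
\left|\left\{\mathbf{M}_{\beta}\big(\omega^{p(\cdot)} |\nabla u|^{p(\cdot)}\big) > \lambda, \; \kappa_\varepsilon^{-1}\mathbf{M}_{\beta}\mathbb{F}_{\omega} \le \lambda\right\}\right| \le C\varepsilon \left|\left\{\sigma^{-1}\mathbf{M}_{\beta}\big(\omega^{p(\cdot)} |\nabla u|^{p(\cdot)}\big) > \lambda\right\}\right|
\end{align}
for all $\lambda \ge \lambda_0$. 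Writing $G := \mathbf{M}_{\beta}\big(\omega^{p(\cdot)} |\nabla u|^{p(\cdot)}\big)$ and $H := \mathbf{M}_{\beta}\mathbb{F}_{\omega}$, the same set decomposition as in the proof of Theorem~\ref{theo-Ge1} gives $|\{G > \lambda\}| \le C\varepsilon\,|\{\sigma^{-1}G > \lambda\}| + |\{\kappa_\varepsilon^{-1}H > \lambda\}|$ for $\lambda \ge \lambda_0$.

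Next I would upgrade this superlevel-set bound from Lebesgue measure to the measure $\mu$ using the $\mathcal{A}_\infty$ property~\eqref{Muck-w}: if $[\mu]_{\mathcal{A}_\infty} = (C_\mu,\theta)$, then for any measurable $\mathcal{V} \subset B$ one has $\mu(\mathcal{V}) \le C_\mu (|\mathcal{V}|/|B|)^\theta \mu(B)$. Applying this on each ball $B_\rho(x_0)$ in the covering produced inside Theorem~\ref{theo-LV}'s proof — equivalently, re-running the Vitali-type Lemma~\ref{lem:Vitali} with $\mu$ in place of Lebesgue measure — yields the weighted level-set inequality
\begin{align}\label{Ge2-est2}
\mu\big(\{G > \lambda\}\big) \le C\varepsilon^{\theta}\,\mu\big(\{\sigma^{-1}G > \lambda\}\big) + \mu\big(\{\kappa_\varepsilon^{-1}H > \lambda\}\big), \qquad \lambda \ge \lambda_0.
\end{align}
Here the power $\theta$ on $\varepsilon$ is the only structural change from the unweighted case, and it is harmless since $\varepsilon$ is at our disposal. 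Then I would integrate~\eqref{Ge2-est2} against $d\Phi$: using the layer-cake identity $\int_\Omega \Phi(f)\,d\mu = \int_0^\infty \mu(\{f > \lambda\})\,\Phi'(\lambda)\,d\lambda$ (valid since $\Phi$ is increasing, $C^1$, with $\Phi(0)=0$), and the change of variables $\lambda \mapsto \sigma\lambda$ together with the doubling condition~\eqref{Phi-cond} (which gives $\Phi(\sigma^{-1}\lambda) \le c_0^{\,\lceil \log_2 \sigma^{-1}\rceil}\Phi(\lambda) =: c_0'\,\Phi(\lambda)$, and likewise a factor $c_0''$ for the rescaling by $\kappa_\varepsilon^{-1}$), I obtain
\begin{align}\label{Ge2-est3}
\int_{\lambda_0}^{\infty}\Phi'(\lambda)\,\mu(\{G>\lambda\})\,d\lambda \le C c_0'\,\varepsilon^{\theta}\int_{0}^{\infty}\Phi'(\lambda)\,\mu(\{G>\lambda\})\,d\lambda + C c_0''\int_{0}^{\infty}\Phi'(\lambda)\,\mu(\{H>\lambda\})\,d\lambda.
\end{align}
Splitting the left integral of $\int_\Omega\Phi(G)\,d\mu$ at $\lambda_0$ and bounding the contribution on $(0,\lambda_0)$ by $\Phi(\lambda_0)\,\mu(\Omega)$, then choosing $\varepsilon$ small enough (depending on $c_0$, $[\mu]_{\mathcal{A}_\infty}$, \textsc{dataset}, $\sigma$) so that $Cc_0'\varepsilon^\theta \le \tfrac12$, the first term on the right of~\eqref{Ge2-est3} is absorbed into the left-hand side, which yields~\eqref{ineq-Ge2} with the additive constant absorbing $\Phi(\lambda_0)\mu(\Omega)$.

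The main obstacle I anticipate is not the Orlicz integration — that is a routine layer-cake computation once~\eqref{Ge2-est2} is in hand — but rather justifying~\eqref{Ge2-est2} itself, i.e.\ that the Vitali covering argument underlying Theorem~\ref{theo-LV} transfers from Lebesgue measure to an arbitrary $\mathcal{A}_\infty$ weight $\mu$ with only the quantitative change $\varepsilon \rightsquigarrow C\varepsilon^\theta$. This requires that the density estimate $|B_\rho(x_0)\cap V_{1,\varepsilon}^\lambda| < \varepsilon|B_\rho(x_0)|$ proved inside Theorem~\ref{theo-LV} be promoted to $\mu(B_\rho(x_0)\cap V_{1,\varepsilon}^\lambda) \le C\varepsilon^\theta \mu(B_\rho(x_0))$ via~\eqref{Muck-w}, and that the stopping-time/Calder\'on--Zygmund decomposition behind Lemma~\ref{lem:Vitali} be stable under comparison of $\mu$ with Lebesgue measure (which it is, since $\mu \in \mathcal{A}_\infty$ is doubling and the Reifenberg flatness hypothesis $\mathbf{(H_3)}$ is geometric, independent of the measure). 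One should also double-check that the threshold $\lambda_0 = \kappa_\varepsilon^{-1}r_0^\beta$ depends on $\varepsilon$ only through $\kappa_\varepsilon$, so that after fixing $\varepsilon$ the quantity $\Phi(\lambda_0)\mu(\Omega)$ is a genuine constant depending on $c_0$, $[\mu]_{\mathcal{A}_\infty}$ and \textsc{dataset} — which it is, completing the argument.
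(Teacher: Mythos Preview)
Your proposal is correct and follows essentially the same route as the paper's own proof: invoke the level-set inequality of Theorem~\ref{theo-LV}, upgrade it from Lebesgue measure to the $\mathcal{A}_\infty$ weight $\mu$ (picking up the exponent $\theta$ on $\varepsilon$), integrate against $\Phi'$ via the layer-cake formula, use the doubling condition~\eqref{Phi-cond} to absorb the rescalings by $\sigma^{-1}$ and $\kappa_\varepsilon^{-1}$, split at $\lambda_0$, and choose $\varepsilon$ small to absorb. Your discussion of the ``main obstacle'' --- that the passage from~\eqref{ineq-LV} to the $\mu$-weighted version~\eqref{Ge2-est2} really rests on applying~\eqref{Muck-w} at the level of the local density estimates inside the Vitali covering, rather than on the global level-set inequality directly --- is in fact more explicit than the paper, which simply states the weighted level-set inequality as a consequence of $\mu \in \mathcal{A}_\infty$.
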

\begin{proof}
In this proof, to simplify the level-set inequality associated with a Muckenhoupt weight, we will use the concept of \emph{weighted distribution function}, first introduced by Grafakos~\cite{G14}, and further developed in our previous work~\cite{PNJFA}. If this may be desirable, the weighted distribution function of any $f \in \mathcal{M}eas(\Omega)$ is defined as follows
\begin{align}\label{df-lambda}
d_f(\lambda) = \mu(\{x \in \Omega: \, |f(x)|>\lambda\}), \quad \mbox{ for any } \lambda \ge 0.
\end{align}
Thanks to~\eqref{df-lambda} and Fubini's theorem, it is easily to verify
\begin{align}\label{int-Phi}
\int_{\Omega} \Phi(|f|) d\mu = \int_0^{\infty} \Phi^{\prime}(\lambda) d_f(\lambda) d\lambda.
\end{align}
We now use the definition of the distribution function in~\eqref{df-lambda} for the following quantities
\begin{align}\notag
\mathbb{U} := \mathbf{M}_{\beta}\big(\omega^{p(\cdot)} |\nabla u|^{p(\cdot)}\big), \quad \text{and} \quad \mathbb{G} := \mathbf{M}_{\beta}\mathbb{F}_{\omega}.
\end{align} 
Taking~\eqref{int-Phi} into account, it allows us to write
\begin{align}\label{Ge2-est0}
\int_{\Omega} \Phi(\mathbb{U}) d\mu = \int_0^{\infty} \Phi^{\prime}(\lambda) d_{\mathbb{U}}(\lambda) d\lambda.
\end{align}
At this point, to control the integral on the right-hand side of~\eqref{Ge2-est0}, it is necessary to establish a relationship between $d_{\mathbb{U}}$ and $d_{\mathbb{G}}$. By applying Theorem~\ref{theo-LV}, for every $\varepsilon \in (0,1)$, there exist positive constants $\sigma$, $\kappa_{\varepsilon}$, and $C$ such that if $\mathbf{(H_1)}$-$\mathbf{(H_3)}$ hold for $\kappa_{\varepsilon}$ and for some $r_0>0$, then the following level-set inequality holds:
\begin{align}\notag
\left|\left\{\mathbb{U} > \lambda, \, \kappa_{\varepsilon}^{-1}\mathbb{G} \le  \lambda\right\}\right| \le C \varepsilon \left|\left\{\sigma^{-1}\mathbb{U} > \lambda\right\}\right|,
\end{align}
for all $\lambda \ge \lambda_0 := \kappa_{\varepsilon}^{-1}r_0^{\beta}$. Since $\mu \in \mathcal{A}_{\infty}$, there exists $\theta>0$ such that
\begin{align}\label{Ge2-est1}
\mu\left(\left\{\mathbb{U} > \lambda, \, \kappa_{\varepsilon}^{-1}\mathbb{G} \le  \lambda\right\}\right) \le C \varepsilon^{\theta} \mu\left(\left\{\sigma^{-1}\mathbb{U} > \lambda\right\}\right).
\end{align}
In a similar fashion to the proof of~\eqref{Ge1-est2} in Theorem~\ref{theo-Ge1}, it follows from~\eqref{Ge2-est1} that
\begin{align}\label{DF-est1}
d_{\mathbb{U}}(\lambda) \le C \varepsilon^{\theta} d_{\sigma^{-1}\mathbb{U}}(\lambda) + d_{\kappa_{\varepsilon}^{-1}\mathbb{G}}(\lambda), \quad \mbox{ for all } \lambda \ge \lambda_0.
\end{align}
By substituting~\eqref{DF-est1} into the integral over $(\lambda_0,\infty)$ in~\eqref{Ge2-est0}, one obtains that
\begin{align}\label{Ge2-est3}
\int_{\Omega} \Phi(\mathbb{U}) d\mu & = \int_0^{\lambda_0} \Phi^{\prime}(\lambda) d_{\mathbb{U}}(\lambda) d\lambda + \int_{\lambda_0}^{\infty} \Phi^{\prime}(\lambda) d_{\mathbb{U}}(\lambda) d\lambda \notag \\
& \le \mu(\Omega) \int_0^{\lambda_0} \Phi^{\prime}(\lambda) d\lambda + \int_{\lambda_0}^{\infty} \Phi^{\prime}(\lambda) C \varepsilon^{\theta} d_{\sigma^{-1}\mathbb{U}}(\lambda) d\lambda + \int_{\lambda_0}^{\infty} \Phi^{\prime}(\lambda) d_{\kappa_{\varepsilon}^{-1}\mathbb{G}}(\lambda) d\lambda \notag \\
& \le \mu(\Omega) \Phi(\lambda_0) + C \varepsilon^{\theta} \int_{\Omega} \Phi(\sigma^{-1}\mathbb{U}) d\mu + \int_{\Omega} \Phi(\kappa_{\varepsilon}^{-1}\mathbb{G}) d\mu.
\end{align}
By the doubling assumption of $\Phi$ in~\eqref{Phi-cond}, it is possible to show that $\Phi(\sigma^{-1}\mathbb{U}) \le C \Phi(\mathbb{U})$. Indeed, one can choose an integer $m$ such that $\sigma^{-1} \le 2^m$. Due to the monotonicity of $\Phi$ together~\eqref{Phi-cond}, we derive that
\begin{align*}
\Phi(\sigma^{-1}\mathbb{U}) \le \Phi(2^m\mathbb{U}) \le c_0^{m} \Phi(\mathbb{U}).
\end{align*}
A completely analogous argument shows that there is a positive constant $C_{\varepsilon}$ which depends on $\varepsilon$ such that $\Phi(\kappa_{\varepsilon}^{-1}\mathbb{G}) \le C_{\varepsilon} \Phi(\mathbb{G})$. Then,~\eqref{Ge2-est3} deduces to
\begin{align}\notag
\int_{\Omega} \Phi(\mathbb{U}) d\mu & \le \mu(\Omega) \Phi(\lambda_0) + C \varepsilon^{\theta} \int_{\Omega} \Phi(\mathbb{U}) d\mu + C_{\varepsilon}\int_{\Omega} \Phi(\mathbb{G}) d\mu.
\end{align}
The proof of~\eqref{ineq-Ge2} is now complete by taking $\varepsilon>0$ small enough.
\end{proof}

As a consequence of Theorem~\ref{theo-Ge2} and the boundedness of the maximal operator in \cite[Theorem 1.2.1]{KK91}, we directly derive the following corollary. 

\begin{corollary}\label{cor-main}
Let $\Phi: [0,\infty) \to [0,\infty)$ be a Young function and $\Phi \in \Delta_2 \cap \nabla_2$. Then, under the structure assumptions of Theorem~\ref{theo-Ge1}, there exist $\kappa \in (0,1)$ and $C = C(\textsc{dataset})>0$ such that the following fractional gradient estimate holds true:
\begin{align}\label{ineq-cor}
\int_{\Omega}\Phi\left(\omega^{p(x)} |\nabla u|^{p(x)}\right)dx & \le C \left[ 1 +  \int_{\Omega}\Phi\left(\mathbb{F}_{\omega}\right)dx\right],
\end{align}
if the assumptions $\mathbf{(H_1)}$-$\mathbf{(H_3)}$ satisfy for some $r_0>0$.
\end{corollary}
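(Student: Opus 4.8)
The plan is to obtain Corollary~\ref{cor-main} as a specialization of Theorem~\ref{theo-Ge2}: one takes the fractional order $\beta = 0$ and the Muckenhoupt weight $\mu \equiv 1$ (the Lebesgue measure, which belongs to every $\mathcal{A}_s$ and hence to $\mathcal{A}_{\infty}$), and then one removes the two maximal operators that survive in~\eqref{ineq-Ge2}. First one checks that the hypotheses of Theorem~\ref{theo-Ge2} are met: by Remark~\ref{rem:M_beta}, $\mathbf{M}_0 = \mathbf{M}$ is the Hardy--Littlewood maximal function, and a Young function $\Phi \in \Delta_2 \cap \nabla_2$ (Definition~\ref{def:young}) satisfies the doubling condition~\eqref{Phi-cond} with $c_0 = C_1$. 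The only formal gap is that such a $\Phi$ need not be continuously differentiable; however, being convex it is locally Lipschitz, hence absolutely continuous, so $\Phi(\lambda) = \int_0^\lambda \Phi'(t)\,dt$ and the identity $\int_\Omega \Phi(|f|)\,d\mu = \int_0^\infty \Phi'(\lambda)\, d_f(\lambda)\, d\lambda$ exploited in the proof of Theorem~\ref{theo-Ge2} remains valid with $\Phi'$ the almost-everywhere derivative; alternatively one replaces $\Phi$ by an equivalent smooth Young function and transfers the final estimate via $\Delta_2$. With these remarks, Theorem~\ref{theo-Ge2} gives
\begin{align*}
\int_{\Omega}\Phi\big(\mathbf{M}(\omega^{p(\cdot)} |\nabla u|^{p(\cdot)})\big)\,dx & \le C \Big[ 1 +  \int_{\Omega}\Phi\big(\mathbf{M}\mathbb{F}_{\omega}\big)\,dx\Big].
\end{align*}

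Next, to recover the left-hand side of~\eqref{ineq-cor}, one extends $\omega^{p(\cdot)}|\nabla u|^{p(\cdot)}$ by zero outside $\Omega$; by the Lebesgue differentiation theorem there holds the pointwise bound $\omega^{p(x)}|\nabla u(x)|^{p(x)} \le \mathbf{M}\big(\omega^{p(\cdot)} |\nabla u|^{p(\cdot)}\big)(x)$ for almost every $x \in \Omega$, and since $\Phi$ is non-decreasing this yields $\int_\Omega \Phi\big(\omega^{p(x)}|\nabla u|^{p(x)}\big)\,dx \le \int_\Omega \Phi\big(\mathbf{M}(\omega^{p(\cdot)} |\nabla u|^{p(\cdot)})\big)\,dx$. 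To control the right-hand side, one invokes the boundedness of $\mathbf{M}$ on the Orlicz space $L^{\Phi}(\Omega)$ — valid precisely because $\Phi \in \Delta_2 \cap \nabla_2$ — which is \cite[Theorem 1.2.1]{KK91}, so that $\|\mathbf{M}\mathbb{F}_\omega\|_{L^{\Phi}(\Omega)} \le C\,\|\mathbb{F}_\omega\|_{L^{\Phi}(\Omega)}$. Using the $\Delta_2$ property to pass from this norm inequality to the corresponding modular one (normalize $\mathbb{F}_\omega$ by a constant multiple of $\max\{1,\|\mathbb{F}_\omega\|_{L^{\Phi}(\Omega)}\}$ and iterate the doubling bound a bounded number of times) produces $\int_\Omega \Phi(\mathbf{M}\mathbb{F}_\omega)\,dx \le C\int_\Omega \Phi(\mathbb{F}_\omega)\,dx + C$; here one may assume $\int_\Omega\Phi(\mathbb{F}_\omega)\,dx < \infty$, since otherwise~\eqref{ineq-cor} is trivial, and then $\Delta_2$ guarantees $\mathbb{F}_\omega \in L^{\Phi}(\Omega)$. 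Chaining the three inequalities establishes~\eqref{ineq-cor}.

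The substance of the statement lies entirely in Theorem~\ref{theo-Ge2}, and ultimately in the level-set inequality of Theorem~\ref{theo-LV}; this corollary is a packaging step. Consequently, the only points that require genuine care are the two classical reductions above: first, relaxing the differentiability hypothesis of Theorem~\ref{theo-Ge2} to absolute continuity so that an arbitrary $\Delta_2$-Young function is admissible; and second, the norm-to-modular conversion for $\mathbf{M}$ on $L^{\Phi}$, which is where the $\nabla_2$ condition (for boundedness) and the $\Delta_2$ condition (for the control of non-homogeneity) are simultaneously used. I expect this norm-to-modular passage to be the most delicate bookkeeping, although it is standard Orlicz-space machinery and contributes only the harmless additive constant and the $c_0$-dependent multiplicative constant recorded in the statement.
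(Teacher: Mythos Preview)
Your proposal is correct and follows exactly the route the paper indicates: the paper does not give a written proof but simply states that the corollary follows from Theorem~\ref{theo-Ge2} together with the Orlicz boundedness of the maximal operator \cite[Theorem 1.2.1]{KK91}, and you have unpacked precisely that by specializing to $\beta=0$, $\mu\equiv 1$, using Lebesgue differentiation on the left and the cited maximal theorem on the right. Your additional remarks on relaxing the $C^1$ hypothesis and on the norm-to-modular passage are more careful than anything the paper spells out, but they do not diverge from its intended argument.
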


\section{Further discussion}
\label{sec:future}

In summary, our work in this paper focuses on the higher integrability of weak solutions to the two-sided obstacle-type elliptic variational inequality~\eqref{OP-var}. As far as we know, the presence of a singular-degenerate matrix weight in the leading coefficients, as well as the variable exponent growths, can lead to rather significant technical difficulties to deal with. Moreover, the two-sided obstacle problem, whose solution is trapped between two prescribed obstacle functions, is itself a type of free boundary problem that arises naturally from many mathematical models of physical phenomena. With a special interest in recent years, it is therefore interesting and significant to reach for regularity estimates for this type of problem. 

Motivated by preceding contributions to the analysis of both linear and nonlinear problems with singular or degenerate weights in~\cite{FKS1982, BDGN2022, BBDL2023, BY24, CMP2018, Phan2020, CMP2019, DP2021}, in this study, we have improved and designed some tricks to find the optimal gradient regularity of weak solutions from Lebesgue to Orlicz spaces, via estimates involving fractional maximal functions. It is also natural to expect that this study has considerable room for further improvement and generalization. For instance, in more involved situations, the singular or degenerate matrix weight can be incorporated into the nonuniformly elliptic structures, problems with non-standard growth, with measure data or lower order terms, nonlocal problems with fractional operators, and also in the theory of variational inequality, free boundary problems whose degeneracies or singularities are governed by matrix-valued weights, etc. As far as we are concerned, such models naturally arise in various branches of mathematical physics, and their analysis is highly nontrivial and deserves further attention in the literature.

\section*{Acknowledgement}
This research is funded by Vietnam National Foundation for Science and Technology Development (NAFOSTED), Grant Number: 101.02-2025.03.

\section*{Conflict of Interest}
The authors declared that they have no conflict of interest.

\section*{Declarations}
Data sharing not applicable to this article as no datasets were generated or analysed during the current study.\\

\end{document}